\theoremstyle{plain} 
\newtheorem{theorem}{\indent\sc Theorem}[section]
\newtheorem{lemma}[theorem]{\indent\sc Lemma}
\newtheorem{corollary}[theorem]{\indent\sc Corollary}
\newtheorem{proposition}[theorem]{\indent\sc Proposition}
\newtheorem{conjecture}[theorem]{\indent\sc Conjecture}
\theoremstyle{definition} 
\newtheorem{remark}[theorem]{\indent\sc Remark}
\newtheorem{question}[theorem]{\indent\sc Question}
\begin{document}

\title[On the neighborhood of a torus leaf and dynamics]{On the neighborhood of a torus leaf and dynamics of holomorphic foliations}

\author[Takayuki KOIKE and Noboru OGAWA]{Takayuki KOIKE$^{1}$ and Noboru OGAWA$^{2}$}
\address{ 
$^{1}$ Department of Mathematics \\
Graduate School of Science \\
Osaka Metropolitan University \\
3-3-138 Sugimoto, Sumiyoshi-ku, \\
Osaka 558-8585 \\
Japan 
}
\email{tkoike@omu.ac.jp}
\address{
$^{2}$ Department of Mathematics \\
Tokai University  \\
4-1-1 Kitakaname, Hiratsuka-shi,\\
Kanagawa 259-1292 \\
Japan
}
\email{nogawa@tokai.ac.jp}
\subjclass[2010]{Primary 37F75; Secondary 37F50} 
\keywords{Ueda's neighborhood theory, dynamics of holomorphic foliations, hedgehogs}
\thanks{The first author is supported by Leading Initiative for Excellent Young Researchers (No. J171000201). The second author is partially supported by JSPS KAKENHI Grant Number 17K05283, 21K03256. 
They are also partly supported by Osaka City University Advanced Mathematical Institute (MEXT Joint Usage/Research Center on Mathematics and Theoretical Physics). }

\begin{abstract}
Let $X$ be a complex surface and $Y$ be an elliptic curve embedded in $X$. 
Assume that there exists a non-singular holomorphic foliation $\mathcal{F}$ with $Y$ 
as a compact leaf, defined on a neighborhood of $Y$ in $X$. 
We investigate the relation between Ueda's classification of the complex analytic structure of a neighborhood of $Y$ and 
complex dynamics of the holonomy of $\mathcal{F}$ along $Y$. 
More precisely, we show that the pair $(Y,X)$ is of type ($\gamma$) in his classification when there exists a closed curve in $Y$ 
along which the holonomy of $\mathcal{F}$ is irrationally indifferent and non-linearizable. 
We also investigate the metric semi-positivity of the line bundle determined by the divisor $Y$. 
Our approach is based on the theory of hedgehogs, due to P\'{e}rez-Marco. 
\end{abstract}

\maketitle

\section{Introduction}

Let $X$ be a complex manifold and $Y$ be a compact complex submanifold of $X$. 
Assume that there exists a non-singular holomorphic foliation $\mathcal{F}$ 
which has $Y$ as a compact leaf and is defined on a neighborhood $V$ of $Y$ in $X$. 
Our interest is the relation between 
complex-analytic properties of small neighborhoods of $Y$ and 
complex dynamics of 
the holonomy of $\mathcal{F}$ along $Y$. 

In this paper, we consider the case where $X$ is a complex surface and $Y$ is a compact Riemann surface (mainly an elliptic curve) as a compact leaf of $\mathcal{F}$.  
It follows from the fundamental results in foliation theory that the normal bundle $N_{Y/X}$ of $Y$ is topologically trivial;   
Since $N_{Y/X}$ is a holomorphic line bundle on a compact Riemann surface $Y$ and its degree ${\rm deg}\,N_{Y/X} (=c_1(N_{Y/X})=(Y^2))$ is zero by the existence of the Bott connection, which is flat along $Y$  (\cite{B} or \cite{CS}, see also \cite[p. 37]{Br}). 
On the other hand, 
the complex analytic structure on a neighborhood of $Y$ may behave complicatedly even if the normal bundle is holomorphically trivial. 
The neighborhood of an embedded curve with topologically trivial normal bundle is analytically classified into three types $(\alpha), (\beta)$, 
and $(\gamma)$ according to Ueda's classification (\cite{U}, see also \S \ref{section:review_uedatheory} below. 
Original Ueda's neighborhood theory assumes neither that $Y$ is an elliptic curve nor the existence of a foliation). 
A pair $(Y, X)$ is said to be of type $(\beta)$ if there exists a non-singular holomorphic foliation $\mathcal{G}$ 
defined on a neighborhood of $Y$ which has $Y$ as a leaf and has $\textrm{U}(1)$-linear holonomy along $Y$. 
In this case, $Y$ admits a system of \textit{pseudoflat} neighborhoods. 
A pair $(Y, X)$ is said to be of type $(\alpha)$ if, roughly speaking, 
it is different from the case of type $(\beta)$ in $n$-jet along $Y$ for some positive integer $n$ (see \S \ref{section:review_uedatheory} for the precise definition). 
Ueda showed that $Y$ admits a system of \textit{strongly pseudoconcave} neighborhoods in this case \cite[Theorem 1]{U}. 
The remaining case is called type $(\gamma)$. 
So far, little is known about this case. 
The first example of the pair $(Y, X)$ of type $(\gamma)$ is constructed by Ueda \cite[\S 5.4]{U}, in which $Y$ is a compact Riemann surface of genus larger than $0$. 
Note that this example (of Ueda's) satisfies our assumption on the existence of $\mathcal{F}$ on $V$ as above. 
In \cite{K}, the first author investigated the complex-analytic properties of small neighborhoods of $Y$ for Ueda's example $(Y, X)$ when $Y$ is an elliptic curve, 
whose generalization is one of the biggest motivations of the present paper. 
Our purpose is to determine the type of $(Y,X)$ from the information about the holonomy of $\mathcal{F}$ along $Y$. 
In what follows we always assume that $Y$ is an elliptic curve. 

To state our main result, we recall the notion of holonomy of $\mathcal{F}$. 
Take a closed smooth curve $c \colon [0,1]\to Y$ with $p=c(0)=c(1)$ and a transversal $\tau$ for $\mathcal{F}$ through $p$, i.e. holomorphically embedded open disk of $\text{dim}_{\mathbb{C}}=1$ which intersects the leaves of $\mathcal{F}$ transversely. 
If $\tau$ is contained in a foliated chart $(W;(z,w))$, $\tau$ is parametrized by the transverse coordinate $w$.  
We cover $c$ by finitely many foliated charts $\{(W_{j};(z_{j},w_{j}))\}_{j}$ such that $W_{j}\cap Y=\{w_{j}=0\}$, $j=1,\dots,N$, and $W_{1}$ contains $\tau$. 
By moving from one chart to the next chart, we have a local holomorphic diffeomorphism on the $w$-coordinate. 
If we go around along $c$, the composition of the finitely many transition maps gives rise to a local holomorphic diffeomorphism  on $\tau$. 
Upon fixing a parametrization of $\tau$ by $w_{1}$, the composition determines an element $h_{\mathcal{F}}(c, \tau, \{W_{j}\})$ of $\text{Diff}(\mathbb{C},0)$. 
Here, ${\rm Diff}(\mathbb{C}, 0)$ stands for the group of germs at $0$ of local holomorphic diffeomorphisms on $\mathbb{C}$ fixing $0$. 
The germ $h_{\mathcal{F}}(c, \tau, \{W_{j}\})$ is called the \textit{holonomy} of $\mathcal{F}$ along $c$. 
This is not changed under a leafwise homotopy, keeping $p$ fixed, of $c$ inside the charts. 
Moreover, it does not depend on the choice of foliated charts by the cocycle condition on $w_{j}$'s. 
Thus, the germ depends only on its leafwise homotopy class $\gamma:=\langle c \rangle$ keeping $p$ fixed. 
The variation of the holonomy under the choice of transversals can be expressed as the conjugation of the elements in $\text{Diff}(\mathbb{C},0)$, so that the \textit{holonomy homomorphism} $h_{\mathcal{F}}\colon \pi_{1}(Y,p)\to \text{Diff}(\mathbb{C},0)$ is defined, up to conjugation.

We say the holonomy of $\mathcal{F}$ along $c$ is \textit{linearizable} (resp. \textit{non-linearizable}) if the corresponding map $h_{\mathcal{F}}(\gamma)$ is linearizable (resp. non-linearizable) at $0$, where $\gamma:=\langle c \rangle \in \pi_{1}(Y, p)$. 
Also, the holonomy is said to be \textit{rationally} (resp. \textit{irrationally}) \textit{indifferent} if the fixed point $0$ of $h_{\mathcal{F}}(\gamma)$ is rationally (resp. irrationally) indifferent. 
See \S 2 for the definitions. 
Note that, these properties of $h_{\mathcal{F}}(\gamma)$ are invariant under conjugation in $\text{Diff}(\mathbb{C},0)$, so that the definitions do not depend on the choice of transversals. 
The main result is the following. 

\begin{theorem}\label{thm_maincor}
Let $X$ be a complex surface and $Y$ be an elliptic curve embedded in $X$. 
Assume that there exist a non-singular holomorphic foliation $\mathcal{F}$ with $Y$ 
as a compact leaf, defined on a neighborhood of $Y$ in $X$, and a closed curve $c$ in $Y$ 
along which the holonomy of $\mathcal{F}$ is irrationally indifferent and non-linearizable. 
Then, $(Y,X)$ is of type ($\gamma$). 
\end{theorem}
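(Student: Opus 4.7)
The plan is to exclude types $(\alpha)$ and $(\beta)$ separately, using as the main input that the multiplier of the holonomy equals the monodromy of the normal bundle $N := N_{Y/X}$ along $c$. Since the holonomy germ $h := h_\mathcal{F}(\langle c\rangle)$ is by assumption irrationally indifferent, $h'(0) = e^{2\pi i\alpha}$ with $\alpha$ irrational, so $N$ has infinite-order monodromy along $c$; in particular $N$ is not a torsion element of $\mathrm{Pic}^0(Y)$.

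To exclude type $(\alpha)$, I would invoke that the $n$-th Ueda obstruction $u_n(Y, X)$ characterizing type $(\alpha)_n$ is a class in $H^1(Y, N^{-n})$. Since $Y$ is elliptic with $K_Y \cong \mathcal{O}_Y$, Serre duality yields $H^1(Y, N^{-n}) \cong H^0(Y, N^n)^\ast$, and because $N^n$ is a nontrivial degree-$0$ line bundle on $Y$ for every $n \geq 1$ (as $N$ is non-torsion), it has no nonzero global section. Hence $u_n(Y, X) = 0$ for every $n$, and $(Y, X)$ cannot be of type $(\alpha)$.

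To exclude type $(\beta)$, suppose for contradiction that it holds. By Ueda's structure theorem for type $(\beta)$, a neighborhood $V$ of $Y$ is biholomorphic to a neighborhood of the zero section in the unitary-flat line bundle $L = N$. I would pull $\mathcal{F}$ back to the universal cover $\tilde Y \times D_\epsilon$ and write it as the kernel of a 1-form $\omega = dw + g(z, w)\,dz$ with $g(z, 0) = 0$. The $\pi_1(Y)$-equivariance under the covering action $(z, w) \mapsto (z + \gamma, \rho_0(\gamma) w)$ forces each Taylor coefficient $a_k(z)$ of $g(z, w)$ in $w^k$ to be a holomorphic section of $L^{1-k}$ on $Y$. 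Since $\rho_0$ has infinite order, $L^{1-k}$ is a nontrivial degree-$0$ line bundle and admits no nonzero global section for every $k \geq 2$; thus $a_k \equiv 0$. The remaining linear coefficient $a_1$ is a constant (a section of the trivial bundle), so the holonomy ODE $dw/dz = -a_1 w$ integrates to an exactly linear return map $h(w) = \lambda w$ for some $\lambda \in \mathbb{C}^\ast$. Such a map is trivially linearizable, contradicting the non-linearizability of $h$.

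The main obstacle will be two-fold: first, invoking Ueda's structure theorem in a sufficiently explicit form to produce the flat-bundle model of type $(\beta)$ above; and second, carefully matching sign and normalization conventions so that the derivative of the holonomy equals the normal-bundle monodromy without a discrepancy. Once these identifications are in place, the whole argument reduces to a single algebro-geometric input: a non-torsion degree-$0$ line bundle on an elliptic curve has neither $H^0$ nor $H^1$. This simultaneously kills the Ueda obstructions responsible for type $(\alpha)$ and the nonlinear part of the holonomy in the type-$(\beta)$ flat-bundle model, so no deep dynamical machinery (hedgehogs, pseudoconcavity) is needed for the proof.
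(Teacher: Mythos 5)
Your exclusion of type $(\alpha)$ is essentially the paper's argument (Serre duality on the elliptic curve plus vanishing of $H^1(Y,N^{-n})$ for $N$ non-torsion), with one small gap: from the fact that the monodromy of $N$ along $c$ alone has infinite order you cannot yet conclude that $N$ is non-torsion in ${\rm Pic}^0(Y)$, because a flat line bundle can be holomorphically trivial while having non-unitary, infinite-order monodromy (the representations $\gamma\mapsto e^{c\gamma}$). You must first know that the full monodromy representation is unitary; the paper gets this from the commutation of the two holonomies via Proposition \ref{prop:comm}(2) (if the second multiplier were off the unit circle, Koenigs plus commutation would linearize $h$, contradicting the hypothesis). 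This part is fixable.

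The genuine gap is in your exclusion of type $(\beta)$. There is no ``Ueda structure theorem'' asserting that type $(\beta)$ yields a neighborhood of $Y$ biholomorphic to a neighborhood of the zero section of the unitary-flat bundle $N$. Type $(\beta)$ only provides the transverse data: a defining system $\{w_j\}$ with $t_{jk}w_k=w_j$, $t_{jk}\in{\rm U}(1)$, equivalently a second foliation $\mathcal{G}$ with unitary linear holonomy and pseudoflat hypersurfaces $\{|w_j|=\epsilon\}$. It gives neither a holomorphic submersion $V\to Y$ nor global coordinates $(z,w)$ on a covering $\widetilde Y\times D_\epsilon$, and without that product model your equivariance computation (coefficients $a_k$ as sections of $L^{1-k}$) cannot even be set up. Full linearization of the neighborhood is known only under Diophantine/Brjuno-type conditions on $N$ (Arnol'd; cf.\ Ueda's Theorem 3), and these are exactly unavailable here: non-linearizability of the holonomy forces its rotation number to violate the Brjuno condition, so $N$ is a Cremer-type bundle. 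Note also that the paper's Theorem \ref{thm:main} lists the existence of a submersion $\pi\colon V\to Y$ as a separate hypothesis (condition (1)), underlining that it is not automatic. Consequently the whole type-$(\beta)$ exclusion collapses as written. The paper proceeds differently: from the flat system it builds the pluriharmonic function $-\log|w_j|$ on $V\setminus Y$, diverging logarithmically along $Y$, and then contradicts this via Theorem \ref{thm_ph}, whose proof uses P\'erez-Marco's hedgehog for the commuting holonomies (a completely invariant compact $K$ with a leaf dense in $\mathcal{F}(\partial K)\ni Y$), parabolicity of the leaves, and Liouville's theorem. So the dynamical machinery you hoped to avoid is precisely what carries the proof in the non-Brjuno regime.
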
 

In order to show that the pair $(Y, X)$ as in Theorem \ref{thm_maincor} is of type ($\gamma$) by contradiction, 
we need to compare the foliation $\mathcal{F}$ with the foliation $\mathcal{G}$ as in the definition of type $(\beta)$ above in some sense. 
Such type of arguments can be regarded as an analytic variant of the argument in \cite[\S 4]{LTT}, 
in which the formal comparison of two (formal) foliations are investigated. 
To prove Theorem \ref{thm_maincor}, it is necessary  to discuss not only such formal arguments but also some arguments on analytic foliations. 
In this paper, we address the problem from the perspective of complex dynamics. 
Our approach is based on the theory of hedgehogs, due to P\'{e}rez-Marco. 

This result includes Ueda's example, see \S 3.3.2. 
Ueda used the small cycle property to show that it is of type $(\gamma)$. 
Comparing with this, we will use a complete invariant set around irrationally indifferent fixed point $0$, the so-called \textit{hedgehog}, which was introduced by P\'{e}rez-Marco \cite{P4}. 
This result also includes the example of \cite[\S 5.4]{GS} as the case IV in our classification in \S 3.1. Our strategy of the proof of Theorem \ref{thm_maincor} gives a geometric interpretation for the arguments in \cite[\S 5.4]{GS}. 

By using Theorem \ref{thm_maincor}, one can generalize \cite[Theorem 1.1]{K}. 
Although we will give the precise statement in \S 3.1, 
we restrict here it to the formulation of \cite{K} to compare it with the previous theorem. 
Consider the following two conditions: 
$(1)$ there exists a neighborhood $V$ of $Y$ and a holomorphic submersion $\pi\colon V\to Y$ such that $\pi|_Y$ is the identity map, and 
$(2)$ there exists a non-trivial element $\gamma_1 \in \pi_1(Y, p)$ such that $h_{\mathcal{F}}(\gamma_1)$ is the identity map.
Then, as a generalization of \cite[Theorem 1.1]{K}, we have the following statement. 
\begin{theorem}\label{thm:main}
Assume that $(Y, X; \mathcal{F})$ satisfies the conditions $(1)$ and $(2)$ above. 
Denote by $g:=h_{\mathcal{F}}(\gamma_2)$ where $\pi_1(Y, p) \cong\mathbb{Z}\gamma_1 \oplus \mathbb{Z} \gamma_2$. 
Then, the following hold: 
\begin{enumerate}
\item[(i)] The pair $(Y, X)$ is of type $(\alpha)$ if and only if 
$g$ has a rationally indifferent fixed point $0$ and is not of finite order, i.e. $g^n\neq {\rm id}$ for any $n>0$. 
\item[(ii)]  The pair $(Y, X)$ is of type $(\beta)$ if and only if $g$ is linearizable at $0$. 
\item[(iii)]  The pair $(Y, X)$ is of type $(\gamma)$ if and only if $g$ has an irrationally indifferent fixed point $0$ and is non-linearizable at $0$. 
\end{enumerate}
\end{theorem}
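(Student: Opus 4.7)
The plan is to first exploit conditions (1) and (2) to put the germ $(V, Y, \mathcal{F})$ into a standard suspension model, then prove the three ``$\Leftarrow$'' implications separately; the ``$\Rightarrow$'' implications follow by mutual exclusion, since the three Ueda types and the three dynamical cases on $g$ are each exclusive and exhaustive. For the latter one uses that a rationally indifferent germ is linearizable iff it has finite order, and that $|g'(0)| = 1$ because the existence of $\mathcal{F}$ forces $N_{Y/X}$ to be topologically trivial. Writing $Y = \mathbb{C}/(\mathbb{Z}\oplus\mathbb{Z}\tau)$, lifting to the universal cover of $Y$, and using the fibers of $\pi$ as a coherent family of transversals for $\mathcal{F}$, one identifies $(V, Y, \mathcal{F})$ as a germ with the quotient $(\mathbb{C}^{*}\times\Delta)/\langle\sigma\rangle$, where $\sigma(z,w) = (\mu z, g(w))$ and $\mu = e^{2\pi i\tau}$; the foliation $\mathcal{F}$ becomes $\{w = \text{const}\}$, and condition (2) is precisely what makes the first deck transformation act trivially on the $w$-coordinate.

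Once this model is in hand, part (iii) is immediate: taking $c$ to represent $\gamma_2$, the holonomy of $\mathcal{F}$ along $c$ is $g$, and Theorem \ref{thm_maincor} applies directly to give type $(\gamma)$. For part (ii), if $g$ is linearizable then a holomorphic change of the transversal coordinate $w \mapsto \tilde w$, extended to a neighborhood of $Y$ via the retraction $\pi$, reduces $\sigma$ to the form $(\mu z, \nu \tilde w)$ with $|\nu| = 1$; then $\mathcal{F}$ itself witnesses $\text{U}(1)$-linear holonomy along every loop in $Y$, so $(Y,X)$ is of type $(\beta)$ by definition.

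For part (i), assume $g$ is rationally indifferent with $g'(0)$ a primitive $q$-th root of unity and $g^n \neq \text{id}$ for all $n$. Then $g^q$ is a nontrivial parabolic germ, $g^q(w) = w + a_k w^k + \ldots$ with some $k\geq 2$ and $a_k \neq 0$, and the standard averaging argument shows that $g$ is not even formally linearizable (if it were, $g^q$ would be formally, and hence analytically, the identity, contradicting $g^n \neq \text{id}$). Transferring this obstruction back to the model, the $(k-1)$-jet of the embedding $(Y, X)$ differs from every type-$(\beta)$ model at finite order, so $(Y, X)$ is of type $(\alpha)$; this jet-level calculation is essentially the one performed in \cite{K}.

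The main obstacle is the reduction step at the beginning: one must verify that the holonomy data $(h_\mathcal{F}(\gamma_1), h_\mathcal{F}(\gamma_2)) = (\text{id}, g)$, together with the disk bundle structure provided by $\pi$, determine the germ $(V, Y, \mathcal{F})$ up to biholomorphism. This requires a careful use of the fibers of $\pi$ as simultaneous transversals for $\mathcal{F}$ and a coherent extension-of-coordinates argument across the universal cover. Once this reduction is established, the rest is short: (ii) is a direct linearization, (iii) is a one-line appeal to Theorem \ref{thm_maincor}, and (i) is the finite-order jet computation from \cite{K}.
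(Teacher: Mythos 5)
Your overall architecture --- proving only the three ``if'' directions and getting ``only if'' by mutual exclusion, using Theorem \ref{thm_maincor} for the type-$(\gamma)$ case, a linearization argument for type $(\beta)$, and a parabolic jet computation for type $(\alpha)$ --- is the same as the paper's. But there is a genuine gap at your very first step: you claim $|g'(0)|=1$ because the existence of $\mathcal{F}$ makes $N_{Y/X}$ topologically trivial. Topological triviality only says that $N_{Y/X}$ has degree $0$; the flat structure induced by the linear holonomy of $\mathcal{F}$ need not be unitary, so $|g'(0)|$ can be any positive real number. (Remark \ref{rmk:u1_flatness_ueda_class}, quoting \cite{CLPT}, exists precisely to stress that unitarity of the linear part is an extra hypothesis, and that linearizable but non-unitary holonomy can even produce type $(\alpha)$ in the absence of condition $(1)$.) Consequently your proposal never treats the case $|g'(0)|\neq 1$. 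That case is not vacuous and it falls under assertion (ii), since Koenigs' theorem makes such a $g$ linearizable; the paper disposes of it by invoking \cite{K}, where condition $(1)$ (the submersion $\pi$) is used in an essential way to conclude type $(\beta)$. Your argument for (ii) also silently assumes the multiplier is unitary after linearization: if $|\nu|\neq 1$, the linearized foliation has $\nu$-linear, not ${\rm U}(1)$-linear, holonomy, so the definition of type $(\beta)$ does not apply directly and an additional argument (as in \cite{K}) is required.

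A smaller point: in (i), ``the $(k-1)$-jet differs from every type-$(\beta)$ model at finite order, so type $(\alpha)$'' is not the definition of type $(\alpha)$. What is actually needed (and what the paper does in the proof of Theorem \ref{thm:main_oldtype} (ii)) is to reduce a torsion multiplier to $1$ by a finite covering (Remark \ref{rmk:finite_cover}), read off from the foliation chart a defining system of type $n-1$, where $n$ is the order of the first nonzero higher coefficient $b_n$ of $g$, and identify the $(n-1)$-th Ueda class with the nonzero class of the representation $\rho(1)=0$, $\rho(\tau)=b_n$ in $H^1(Y,\mathcal{O}_Y)$; your sketch gestures at this but does not carry out the Ueda-class identification. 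Apart from these points, the remaining steps ((iii) via Theorem \ref{thm_maincor}, and (ii) in the unitary case) coincide with the paper's proof, and the suspension-model reduction you worry about is indeed available under conditions $(1)$ and $(2)$ (see \cite[\S 2.2]{K}), though the paper's proof does not actually need it.
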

See \S \ref{section:examples} for the result in more generalized configurations and their proofs. 

We here describe the outline of the proof of Theorem \ref{thm_maincor}, see \S 4.2 for more details. 
First, we show that the pair $(Y, X)$ is not of type $(\alpha)$ 
by a standard observation of the normal bundle of $Y$. 
Second, we show that the pair $(Y, X)$ is not of type $(\beta)$ by contradiction. 
If the pair $(Y, X)$ is of type $(\beta)$, there exists a pluriharmonic function $\Phi\colon V\setminus Y\to \mathbb{R}$ 
for a sufficiently small neighborhood $V$ of $Y$ such that $\Phi(p)=O(-\log {\rm dist}(p, Y))$ as $p\to Y$, where ``${\rm dist}$'' is the local Euclidean distance. 
This contradicts Theorem \ref{thm_ph} below. 
Hence, Theorem \ref{thm_maincor} is deduced from Theorem \ref{thm_ph}. 

\begin{theorem}\label{thm_ph}
Let $(Y, X; \mathcal{F})$ be as in Theorem \ref{thm_maincor}, $V$ a neighborhood of $Y$ in $X$ and $\Phi \colon V \to \mathbb{R} \cup \{\infty\}$ a continuous function which is pluriharmonic on $V\setminus Y$, where $\mathbb{R}\cup\{\infty\}$ is homeomorphic to the standard $(0,1]\subset \mathbb{R}$. 
Then, $\Phi$ is bounded from above on a neighborhood of $Y$. 
\end{theorem}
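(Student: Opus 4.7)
The plan is to argue by contradiction: assume $\Phi$ is not bounded above on any neighborhood of $Y$, and derive a contradiction from the non-linearizable irrationally indifferent dynamics of $f:=h_{\mathcal{F}}(c)$. Since $\Phi$ is continuous into $\mathbb{R}\cup\{\infty\}$ and $Y$ is compact, the hypothesis produces a point $p\in Y$ with $\Phi(p)=+\infty$. Moving the basepoint of $c$ to $p$ along a path in $Y$ only conjugates $f$ and preserves its dynamical type, so I may assume $c$ is based at $p$ and $f\in\text{Diff}(\mathbb{C},0)$ acts on a transversal $\tau\cong D$ through $p$.

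I then pass to the universal cover $\widetilde V\to V$ of a sufficiently thin tubular neighborhood of $Y$. Since $\widetilde Y\cong\mathbb{C}$ is simply connected and its normal bundle is holomorphically trivial, the lifted foliation has no holonomy along $\widetilde Y$ and one can trivialize $\widetilde V\cong\mathbb{C}\times D$ so that leaves are $\mathbb{C}\times\{w\}$ and $\widetilde Y=\mathbb{C}\times\{0\}$. The $\pi_1(Y)\cong\mathbb{Z}^2$ deck action then has the form $(z,w)\mapsto(z+\lambda_\gamma,h_\gamma(w))$, where $h_\gamma$ is the holonomy along $\gamma$. The lift $\widetilde\Phi$ is $\mathbb{Z}^2$-invariant, pluriharmonic off $\widetilde Y$, and continuous into $\mathbb{R}\cup\{\infty\}$. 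Taking a compact fundamental domain $\bar F\times\bar D$, continuity into $\mathbb{R}\cup\{\infty\}\simeq(0,1]$ together with the finiteness of $\widetilde\Phi$ on the nonempty open subset $\bar F\times(\bar D\setminus\{0\})$ yields a finite minimum $m\in\mathbb{R}$ there, and $\mathbb{Z}^2$-invariance propagates this to $\widetilde\Phi\geq m$ on all of $\widetilde V$.

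The decisive step is a leaf-wise application of Liouville's theorem. For each $w\in D\setminus\{0\}$, the leaf $\mathbb{C}\times\{w\}$ sits in $\widetilde V\setminus\widetilde Y$, so $z\mapsto\widetilde\Phi(z,w)$ is a harmonic function on $\mathbb{C}$ bounded below by $m$; writing $\widetilde\Phi(\cdot,w)-m=\mathrm{Re}(F)$ for an entire $F$ with $\mathrm{Re}(F)\geq 0$ and applying Liouville to the bounded entire function $e^{-F}$ (the Borel--Carath\'eodory form) forces $F$ and hence $\widetilde\Phi(\cdot,w)$ to be constant. Thus $\widetilde\Phi(z,w)=c(w)$ depends only on $w$, so $\Phi$ is leaf-wise constant off $Y$ and the restriction $\phi:=\Phi|_\tau$ satisfies $\phi\circ h_\gamma=\phi$ for every $\gamma\in\pi_1(Y,p)$; in particular $\phi\circ f=\phi$. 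Moreover $\phi:D\to\mathbb{R}\cup\{\infty\}$ is continuous, harmonic on $D\setminus\{0\}$, with $\phi(0)=+\infty$ and $\phi<\infty$ on $D\setminus\{0\}$.

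The contradiction is then extracted from P\'erez-Marco's hedgehog theory. Let $K\subset D$ be the hedgehog of $f$: a totally $f$-invariant compact connected set with $0\in K$ and $K\setminus\{0\}\neq\emptyset$, because $f$ is non-linearizable. The function $u:=-\phi$ is subharmonic on $D$, $f$-invariant, and has $u(0)=-\infty$ while $u>-\infty$ on $K\setminus\{0\}$. P\'erez-Marco's rigidity theorem (\cite{P4}) -- in substance, the minimality of the dynamics of $f$ on $K$ in the non-linearizable case forces every $f$-invariant upper semicontinuous (in particular, subharmonic) function to be constant on $K$ -- therefore obliges $u\equiv -\infty$ on $K$, contradicting its finiteness on $K\setminus\{0\}$. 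The principal obstacle is this last step: locating and applying the precise P\'erez-Marco-type rigidity statement for $f$-invariant subharmonic functions near a non-linearizable irrationally indifferent fixed point, which is the genuine dynamical heart of the proof.
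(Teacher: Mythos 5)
The decisive step of your argument---the claim that on the universal cover one can holomorphically trivialize $\widetilde V\cong\mathbb{C}\times D$ with leaves $\mathbb{C}\times\{w\}$ and deck action $(z,w)\mapsto(z+\lambda_\gamma,h_\gamma(w))$---is not only unjustified but is actually impossible under the hypotheses of the theorem. Trivial holonomy along the non-compact leaf $\widetilde Y\cong\mathbb{C}$ does not yield a product neighborhood (there is no Reeb-type stability for non-compact leaves, and the paper stresses in \S 3.3.2 that a holomorphic foliation germ along $Y$ is \emph{not} determined by its holonomy; this is exactly why condition $(1)$ of Theorem \ref{thm:main} is an extra hypothesis). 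Worse: if such an equivariant product existed, each deck transformation would induce a biholomorphism $h_\gamma\colon D\to D$ fixing $0$ whose germ at $0$ is the holonomy, so in particular $f=h_{\mathcal F}(\langle c\rangle)$ would extend to an automorphism of the disk $D$ fixing $0$ and, by the Schwarz lemma, be conjugate to a rotation---contradicting the assumed non-linearizability. Equivalently, your model forces a disk $D$ to be completely invariant under the holonomy group, whereas for a non-linearizable irrationally indifferent germ no neighborhood of $0$ is invariant; this failure of invariant disks is precisely the reason the paper resorts to P\'erez-Marco's hedgehogs. So the reduction to ``$\Phi$ is leafwise constant on all leaves and descends to an $f$-invariant function $\phi$ on $\tau$'' collapses. (Two smaller points: the lower bound on $\Phi$ near $Y$ needs no covering argument at all---continuity into $\mathbb{R}\cup\{\infty\}\cong(0,1]$ bounds $\Phi$ from below on any compact neighborhood of $Y$; and the ``rigidity theorem for invariant subharmonic functions'' you could not locate is not needed: had you an $f$-invariant continuous $\phi$ with a dense orbit in $\partial K$ accumulating at $0\in\partial K$, plain continuity would already give $\phi(0)<\infty$.)

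For comparison, the paper's proof avoids any global normal form: it takes the hedgehog $K\subset\tau$ for the commuting pair $(f,g)$ (Theorem \ref{thm:hedgehog}), uses the complete invariance of $K$ to ensure that the saturation $\mathcal F(K)$ stays in a relatively compact $W\Subset V$ (so $\Phi$ is bounded below there), proves that every leaf in $\mathcal F(K)$ is parabolic via a quasiconformal retraction onto $Y$ (Lemma \ref{lem_parabolic})---this replaces your ``every leaf is $\mathbb{C}\times\{w\}$'' step and is a genuinely nontrivial point---and then applies Liouville's theorem on the universal cover of a single leaf $L_{x_0}$ with $\overline{L_{x_0}}=\mathcal F(\partial K)\supset Y$ (using $0\in\partial K$ and the density of an orbit in $\partial K$), concluding by continuity of $\Phi$. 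If you want to salvage your outline, you must restrict the Liouville argument to leaves through the hedgehog and supply the parabolicity of those leaves; the product structure on the universal cover cannot be assumed.
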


Note that the assumptions of Theorem \ref{thm_ph} imply that $\Phi$ is automatically bounded from below in a neighborhood of $Y$. 
Theorem \ref{thm_ph} is shown by contradicting to the maximum principle for the restriction of $\Phi$ 
on a dense leaf $L$ of $\mathcal{F}$ in an invariant set containing $Y$.  
The existence of such an invariant set is guaranteed by Theorem \ref{thm:hedgehog}, see \S 4.1 for more details. 

\begin{theorem}[{\cite{P2}, \cite{P4}, \cite{P5}}]\label{thm:hedgehog}
Let $f(z)=e^{2\pi\sqrt{-1}\theta}z+O(z^2)$ and $g(z)=e^{2\pi\sqrt{-1}t}z+O(z^2)$, $\theta, t\in \mathbb{R}$, be local holomorphic diffeomorphisms which satisfy $f \circ g = g\circ f$. 
Assume that $t$ is an irrational number. 
Then, the following statements hold. 
\begin{enumerate}
\item[(i)] For arbitrary small neighborhood $U\subset \mathbb{C}$ of $0$, there exists a compact connected subset $K$ of $\overline{U}$ 
such that $0\in K$, $K\not=\{0\}$, $\mathbb{C}\setminus K$ is connected, 
and that $K$ is completely invariant under $f$ and $g$, i.e. $f(K)=f^{-1}(K)=g(K)=g^{-1}(K)=K$. 
\item[(ii)] If $g$ is linearizable at $0$, then $f$ is also linearizable by the linearization map of $g$. In this case, $K$ can be chosen as the closure of a domain in $U$ with $0\in \textrm{Int}\hspace{0.5mm}(K)$. 
\item[(iii)] If $g$ is non-linearizable at $0$, then $K$ contains $0$ as the boundary point. 
\item[(iv)] There exists a point $x_{0}$ of $\partial K$ such that the orbit $(g^n(x_{0}))_{n\in\mathbb{Z}}$ is dense in $\partial K$. 
\end{enumerate}
\end{theorem}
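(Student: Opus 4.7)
The plan is to construct the hedgehog $K$ first for $f$ alone via a nested-intersection procedure, and only afterwards observe that commutativity forces $g$-invariance. Choose a smoothly bounded Jordan domain $U_0 \Subset U$ containing $0$ such that $f$ extends univalently to a neighborhood of $\overline{U_0}$ and $\overline{U_0}$ is not $f$-invariant (this is possible because $|f'(0)|=1$ prevents any small disk from being stable). Set
\[
K_n := \text{the connected component of } 0 \text{ in } \bigcap_{j=0}^{n} f^{-j}\bigl(\overline{U_0}\bigr),
\]
replace each $K_n$ by its topological hull so that $\widehat{\mathbb{C}} \setminus K_n$ is connected, and define $K := \bigcap_{n\ge 0} K_n$. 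A decreasing intersection of compact, connected, full sets is again compact, connected, and full, so $K$ inherits these topological properties, which covers the easy part of (i).

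The central difficulty is to show $K \neq \{0\}$. I would follow P\'erez-Marco's approach: uniformize $\widehat{\mathbb{C}} \setminus K_n$ by the exterior of the unit disk via a Riemann map $\psi_n$, observe that $f$ conjugates through $\psi_n$ to a univalent map defined on an annular neighborhood of the unit circle, and use the irrationality of $\theta$ together with a harmonic-measure / length–area estimate to rule out $\mathrm{diam}(K_n)\to 0$. Heuristically, if $K$ collapsed to a point the rotation number of the induced circle dynamics would be rational, contradicting $f'(0)=e^{2\pi i\theta}$. Given non-triviality, invariance under $g$ follows from commutativity: after shrinking $U_0$ so that $g(\overline{U_0})\cup g^{-1}(\overline{U_0})\subset U$, the sets $g(K_n)$ and $g^{-1}(K_n)$ satisfy the defining property of the $K_n$ built from (possibly enlarged) reference domains, and the maximality built into the ``connected component of $0$'' characterization forces $g(K)=g^{-1}(K)=K$.

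For (ii), if $g$ is linearizable via $\phi$ with $\phi\circ g\circ \phi^{-1} = R_t$, then $\widetilde{f} := \phi\circ f\circ \phi^{-1}$ commutes with the irrational rotation $R_t$; comparing Taylor coefficients shows any holomorphic germ commuting with an irrational rotation must itself be a rotation, so $\widetilde{f} = R_\theta$ and $f$ is linearized by the \emph{same} $\phi$. Consequently $\phi^{-1}$ of a small disk around $0$ is a common Siegel disk for $f$ and $g$ contained in $K$, which proves (ii). For (iii), suppose $0 \in \mathrm{Int}(K)$ while $g$ is non-linearizable. Let $\Omega$ be the connected component of $0$ in $\mathrm{Int}(K)$ and take a Riemann map $\phi \colon \Omega \to \mathbb{D}$ with $\phi(0)=0$; since $g$ preserves $K$ and the interior component through $0$, the conjugate $\phi\circ g\circ\phi^{-1}$ is a holomorphic automorphism of $\mathbb{D}$ fixing $0$, hence a rotation, contradicting non-linearizability of $g$.

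For (iv) I would pass to the simply connected domain $D := \widehat{\mathbb{C}} \setminus K$ and consider the induced $g$-action on the circle $\mathcal{E}(D)$ of prime ends. This action is an orientation-preserving homeomorphism whose rotation number is $t$, extracted from the boundary behaviour of $g$ near $0$ via the Carath\'eodory correspondence. A Denjoy-type analysis (using that $g$ extends analytically across $\partial K$ from outside) shows that the induced circle map has no wandering intervals and is therefore minimal; transporting this minimality back through the prime-end correspondence produces an $x_0 \in \partial K$ with dense $g$-orbit. The main obstacle throughout is the non-triviality in (i): all subsequent assertions rest on fine univalent-function estimates (harmonic measure, extremal length, coefficient inequalities on annuli) that occupy the core of \cite{P2,P4,P5}, and a self-contained treatment would take substantially more effort than the topological bookkeeping needed elsewhere.
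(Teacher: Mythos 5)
Your construction is anchored on $f$, but the hypothesis only makes $t$ (the multiplier exponent of $g$) irrational: $\theta$ may be rational, and in the paper's own application $f$ can even be the identity (condition $(2)$ of Theorem \ref{thm:main}). This breaks your proposal at two points. First, your non-triviality argument (``if $K$ collapsed, the rotation number of the induced circle dynamics would be rational, contradicting $f'(0)=e^{2\pi\sqrt{-1}\theta}$'') has no force when $\theta\in\mathbb{Q}$; worse, when $f$ has finite order the nested intersection you define is essentially $\overline{U_0}$ itself, which has $0$ in its interior, is not $g$-invariant, and cannot satisfy (iii) when $g$ is non-linearizable. The paper instead anchors everything on $g$: if $g$ is linearizable it takes the closed common Siegel disk (using Proposition \ref{prop:comm}(1), which is your Taylor-coefficient argument); if $g$ is non-linearizable it takes a hedgehog of $(U,g)$ from Theorem \ref{siegel} and then proves invariance under $f$.

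Second, and this is the deeper gap, your claim that commutativity plus ``the maximality built into the component-of-$0$ characterization'' forces invariance under the other map is not a proof. The component construction depends on the reference domain; replacing $\overline{U_0}$ by its image under the other map produces a possibly different Siegel compactum, and there is no maximality principle identifying the two. What is actually available is the nesting statement (Lemma \ref{lem_a}): two hedgehogs of the same non-linearizable irrationally indifferent germ $g$ are comparable, which gives only $f(K)\subset K$ (say). Upgrading this inclusion to equality is where the real work lies: when $\theta$ is rational the paper uses finite order of $f$ (iterate to get $K\supset f(K)\supset\cdots\supset f^{N}(K)=K$), and when $\theta$ is irrational it invokes the common-hedgehog theorem (Theorem \ref{Thm.III.14}, proved in \S\ref{section:simplified_prf_of_Thm.III.14}), whose proof intersects the nested images $\bigcap_{n\ge 0} f^{n}(K)$ and rules out collapse to $\{0\}$ via Theorem \ref{pm12} (no orbit of a non-linearizable irrationally indifferent germ converges to $0$). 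None of this machinery appears in your sketch. Finally, for (iv), minimality of the induced prime-end circle dynamics does not transfer directly to density of an actual orbit in $\partial K$: in the non-linearizable case $\partial K$ fails to be locally connected at every point other than $0$, so the Carath\'eodory correspondence does not carry dense prime-end orbits to dense boundary orbits; the paper bridges exactly this step with P\'erez-Marco's harmonic-measure theorem (Theorem \ref{thm:IV.2.3.}), and your ``transporting the minimality back'' is an assertion of that theorem rather than a proof of it.
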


Theorem \ref{thm:hedgehog} is a combination of several facts in the theory of commuting holomorphic germs fixing the origin $0$. See the end of \S2.1.

By imposing an additional condition on the holonomy of $\mathcal{F}$, we have the statement about \textit{plurisubharmonic} functions. Compare with Theorem \ref{thm_ph}. 

\begin{theorem}\label{thm_psh}
Let $(Y, X; \mathcal{F})$ be as in Theorem \ref{thm_maincor}, $V$ a neighborhood of $Y$ in $X$ and $\Phi\colon V \to \mathbb{R}\cup\{\infty\}$ a continuous function which is plurisubharmonic on $V\setminus Y$, where $\mathbb{R}\cup\{\infty\}$ is homeomorphic to the standard $(0, 1]\subset \mathbb{R}$. 
In addition, we assume that there is a homotopically non-trivial closed curve $c'$ in $Y$ along which the holonomy is of finite order. 
Then, $\Phi$ can be extended as a plurisubharmonic function on $V$ which is bounded from above on a neighborhood of $V$. 
\end{theorem}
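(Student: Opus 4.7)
The plan is to show that $\Phi$ is locally bounded from above on a neighborhood of $Y$. Once this is established, the classical extension theorem for plurisubharmonic functions across the analytic hypersurface $Y$ produces a psh extension of $\Phi|_{V\setminus Y}$ to all of $V$; this extension must coincide with $\Phi$ by the assumed continuity into $\mathbb{R}\cup\{\infty\}$, and is automatically bounded above on a neighborhood of $Y$.

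The key new ingredient supplied by the finite-order hypothesis is the existence of a family of compact leaves of $\mathcal{F}$ accumulating on $Y$. Set $f = h_{\mathcal{F}}(\gamma_c) \in \mathrm{Diff}(\mathbb{C},0)$ (the non-linearizable, irrationally indifferent germ) and $g' = h_{\mathcal{F}}(\gamma_{c'})$ with $(g')^k = \mathrm{id}$; since $\pi_1(Y) \cong \mathbb{Z}^2$ is abelian, $f$ and $g'$ commute. By P\'erez-Marco's small-cycles theorem for non-linearizable irrationally indifferent germs, $f$ has periodic points $w_m \to 0$ in a transversal $\tau$ through a base point $y_0 \in Y$. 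Because $g'$ has order $k$, the $\langle f, g' \rangle$-orbit of each $w_m$ is finite and coincides with $L_m \cap \tau$, where $L_m$ is the leaf of $\mathcal{F}$ through $w_m$. Thus $L_m$ is a compact torus, a finite unramified cover of $Y$, and $L_m \to Y$ in the Hausdorff topology as $w_m \to 0$. On each such $L_m$, the maximum principle for psh functions on a compact complex manifold forces $\Phi|_{L_m}$ to be constant, equal to $c_m = \Phi(w_m)$. If $\Phi|_Y$ is finite everywhere, then continuity of $\Phi:V \to \mathbb{R}\cup\{\infty\}$ forces $c_m \to \Phi(y_0) < \infty$ whenever $w_m \to 0$, and the Hausdorff convergence $L_m \to Y$ combined with continuity yields the required upper bound for $\Phi$ near $Y$.

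The main obstacle is thus to exclude the possibility that $\Phi(y_0) = \infty$ for some $y_0 \in Y$. If such a $y_0$ exists, then $c_m \to \infty$ and, by Hausdorff convergence $L_m \to Y$ together with continuity of $\Phi$, one obtains $\Phi \equiv \infty$ on all of $Y$. To derive a contradiction I would adapt the strategy used in the proof of Theorem \ref{thm_ph}: invoke Theorem \ref{thm:hedgehog}(iv) to produce $x_0 \in \partial K$ whose $f$-orbit is dense in $\partial K$, take the leaf $L$ of $\mathcal{F}$ through $x_0$ (so that $\overline{L}$ contains both $Y$ and each compact leaf $L_m$), and apply a maximum-principle argument to $\Phi|_L$. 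The delicate point, where the finite-order hypothesis plays a decisive role, is that the constancy of $\Phi$ on each compact $L_m \subset \overline{L}$ provides enough rigidity --- in effect, a family of ``pluriharmonic slices'' inside the hedgehog saturation $\tilde{K}$ --- to reduce the plurisubharmonic maximum-principle argument to the pluriharmonic one handled by Theorem \ref{thm_ph}, thereby closing the argument.
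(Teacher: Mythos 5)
There is a genuine gap, and it is at the very first load-bearing step: there is no ``small-cycles theorem'' for general non-linearizable irrationally indifferent germs. P\'erez-Marco in fact proved the opposite phenomenon: there exist non-linearizable germs with irrationally indifferent fixed point having \emph{no} periodic orbits in a neighborhood of $0$ (this is precisely why the paper's discussion of Ueda's example invokes a special ``strong Cremer'' polynomial to guarantee small cycles). So the family of periodic points $w_m\to 0$, hence the family of compact leaves $L_m$ accumulating on $Y$, need not exist under the hypotheses of the theorem, and both halves of your argument (the upper bound when $\Phi|_Y$ is finite, and the propagation of $\Phi=\infty$ over all of $Y$) collapse. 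Note also that the step you would use compact leaves for is obtained in the paper much more cheaply: once $\Omega:=\{p\in Y\mid \Phi(p)<\infty\}$ is nonempty, one shows $\Omega$ is closed by a sub-mean-value inequality on transverse discs and then extends $\Phi$ by the local removable-singularity theorem (Demailly, Theorem 5.24); no global leaves are needed there.

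The second, independent gap is the case $\Phi\equiv\infty$ on $Y$, which is the real content of the theorem and which your sketch only gestures at. The proposed ``reduction to the pluriharmonic case of Theorem \ref{thm_ph}'' cannot work as stated: that proof rests on Liouville's theorem for a harmonic function bounded from below on $\widetilde{L}\cong\mathbb{C}$, and there is no such rigidity for subharmonic functions (e.g. $\max(\log|z|,0)$ is subharmonic, bounded below, non-constant), so constancy of $\Phi$ on some auxiliary compact sets does not convert the plurisubharmonic problem into the pluriharmonic one. The paper's actual mechanism uses the finite-order holonomy along $c'$ differently: after passing to a finite cover, the leaf $L_{x_0}$ through a point $x_0\in\partial K$ with dense forward $f$-orbit in $\partial K$ is an open annulus; since the hedgehog satisfies $K\cap\partial U\neq\emptyset$, part of $K$ lies outside the set $V_M=\{\Phi>M\}$, and one builds (via Siu's Stein-neighborhood theorem) a region $\widehat{D}\subset W\setminus V_M$ swept along $c'$ over a neighborhood $D$ of a point $y_0\in K\cap\partial U$. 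Choosing $n_1<n_2<n_3$ with $f^{n_1}(x_0),f^{n_3}(x_0)\in D$ and $f^{n_2}(x_0)\in U_M$, the compact sub-annulus $A\subset L_{x_0}$ bounded by the two lifts of $c'$ has $\partial A\subset\widehat{D}$, so $\Phi|_{\partial A}<M$ while $\Phi(f^{n_2}(x_0))>M$, contradicting the maximum principle for the subharmonic function $\Phi|_{L_{x_0}}$ on $A$. Your proposal would need an argument of this kind (or a genuinely new one) to close the $\Phi\equiv\infty$ case; as written, it does not.
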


Theorem \ref{thm_psh} can be regarded as a weak analogue of Ueda's theorem \cite[Theorem 2]{U} 
on the constraint of the increasing degree of plurisubharmonic functions.
In \cite{K2}, the first author applied \cite[Theorem 2]{U} to show the non-semipositivity 
(i.e. non-existence of a $C^\infty$ Hermitian metric with semi-positive curvature) of a line bundle $L$ on $X$ 
which corresponds to the divisor $Y$ when $(Y, X)$ is of type $(\alpha)$. 
As an application of Theorem \ref{thm_psh}, we have the following: 

\begin{corollary}\label{cor:semipositivity}
Assume that $(Y, X; \mathcal{F})$ is as in Theorem \ref{thm:main}. 
Let $L$ be the line bundle on $X$ which corresponds to the divisor $Y$. 
Then $L$ is semi-positive (i.e. $L$ admits a $C^\infty$ Hermitian metric with semi-positive curvature) 
if and only if the pair $(Y, X)$ is of type $(\beta)$. 
\end{corollary}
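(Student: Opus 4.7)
The plan is to handle the two directions separately, with Theorem \ref{thm_psh} providing the essential content for the non-trivial direction.

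For the sufficiency direction (type $(\beta)$ implies $L$ semi-positive), I would invoke Ueda's structure theorem for type $(\beta)$ neighborhoods. The $\mathrm{U}(1)$-linear holonomy of the accompanying foliation $\mathcal{G}$ along $Y$ yields a well-defined pluriharmonic function $\Phi_0$ on $V \setminus Y$ with logarithmic pole along $Y$ (the $\mathrm{U}(1)$-invariant modulus of a $\mathcal{G}$-transversal coordinate). The formula $|s_Y|_h^2 := e^{-2\Phi_0}$ then gives a flat, hence semi-positive, Hermitian metric on $L|_V$. Using the submersion $\pi \colon V \to Y$ from condition $(1)$ together with a standard cutoff argument, I would patch this local metric with an arbitrary $C^\infty$ metric on $L|_X$ outside a smaller neighborhood of $Y$ to produce a global semi-positive metric on $L$; this globalization step is expected to be routine once the flat metric near $Y$ is in hand.

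For the necessity direction I argue by contradiction via Theorem \ref{thm:main}. If $(Y, X)$ is not of type $(\beta)$, it is either of type $(\alpha)$ or of type $(\gamma)$; the type $(\alpha)$ case is already excluded in \cite{K2}, so only type $(\gamma)$ remains. Assume therefore that $(Y, X)$ is of type $(\gamma)$ and that $L$ carries a $C^\infty$ Hermitian metric $h$ with $\sqrt{-1}\Theta(L, h) \geq 0$. For the tautological section $s_Y$ of $L = [Y]$, set $\Phi := -\log |s_Y|_h^2$ on a neighborhood $V$ of $Y$. Since $|s_Y|_h$ vanishes to order $1$ along $Y$, the function $\Phi$ is continuous as a map $V \to \mathbb{R} \cup \{\infty\}$ with $\Phi|_Y \equiv +\infty$, and the identity $\sqrt{-1}\partial\bar\partial \Phi = \sqrt{-1}\Theta(L, h) \geq 0$ on $V \setminus Y$ shows that $\Phi$ is plurisubharmonic there.

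By Theorem \ref{thm:main}(iii), being of type $(\gamma)$ forces $g = h_{\mathcal{F}}(\gamma_2)$ to be irrationally indifferent and non-linearizable at $0$, which matches the hypothesis of Theorem \ref{thm_maincor} with $c$ a loop representing $\gamma_2$. Condition $(2)$ moreover provides $c' = \gamma_1$ with identity holonomy, which is of finite order; this fulfills the extra hypothesis of Theorem \ref{thm_psh}. Applying Theorem \ref{thm_psh} to $\Phi$ therefore produces a plurisubharmonic extension of $\Phi$ to $V$ that is bounded from above on a neighborhood of $Y$, contradicting $\Phi(p) \to +\infty$ as $p \to Y$. I expect the main obstacle to lie in the sufficiency direction, namely in justifying rigorously that the flat metric near $Y$ can be globalized while preserving semi-positivity; the necessity direction is, given Theorem \ref{thm_psh}, a clean translation between semi-positive metrics on $[Y]$ and continuous plurisubharmonic functions with logarithmic singularity along $Y$.
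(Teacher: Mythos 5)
Your necessity direction is fine and is essentially the paper's own route: type $(\alpha)$ is excluded by \cite{K2}, and in the type $(\gamma)$ case one applies Theorem \ref{thm_psh} to $\Phi=-\log|s_Y|_h^2$, the hypotheses being supplied exactly as you say (condition $(2)$ gives the finite-order loop $c'=\gamma_1$, and Theorem \ref{thm:main}(iii) gives the irrationally indifferent, non-linearizable loop $c$ representing $\gamma_2$); the paper phrases this as repeating the argument of \cite{K2} with Theorem \ref{thm_psh} in place of \cite[Theorem 2]{U}.

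The genuine gap is precisely the step you deferred as ``routine'': gluing the flat metric near $Y$ to a metric on the rest of $X$ by a cutoff. This does not work. If you interpolate the local weights as $\chi\varphi_V+(1-\chi)\varphi_{\mathrm{out}}$, the curvature acquires terms involving $\partial\bar\partial\chi$ and $\partial\chi\wedge\bar\partial(\varphi_V-\varphi_{\mathrm{out}})$ which have no sign, so semi-positivity is destroyed on the gluing collar; moreover an \emph{arbitrary} $C^\infty$ metric on $L|_X$ outside $V$ has no reason to have semi-positive curvature there, so even a successful interpolation would not yield the conclusion, and the submersion $\pi$ of condition $(1)$ helps with neither problem. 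The paper's proof avoids both issues by gluing the flat metric $h_V$ on $L|_V$ (type $(\beta)$ means the $\mathrm{U}(1)$-flat structure of $N_{Y/X}$ extends to $[Y]|_V$, so $h_V$ exists directly) not against an arbitrary metric but against the canonical singular metric $h_{\rm sing}$ on $L$ determined by $|f_Y|^2_{h_{\rm sing}}\equiv 1$, where $f_Y\in H^0(X,L)$ is a section with ${\rm div}(f_Y)=Y$: this metric is smooth with zero curvature on all of $X\setminus Y$ and blows up along $Y$. On the overlap $V\setminus Y$ both weights are pluriharmonic, and their difference $\log|f_Y|^2_{h_V}$ tends to $-\infty$ along $Y$, so (after adjusting $h_V$ by a constant) a \emph{regularized minimum} of the two metrics (equivalently, a regularized maximum of the local plurisubharmonic weights) coincides with $h_V$ near $Y$ and with $h_{\rm sing}$ near $\partial V$, hence extends by $h_{\rm sing}$ over $X\setminus V$ to a global $C^\infty$ metric with semi-positive curvature; this is the construction of \cite[Corollary 3.4]{K1} invoked in the paper. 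Replacing your cutoff by this regularized-minimum gluing against $h_{\rm sing}$ closes the gap.
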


We believe that Corollary \ref{cor:semipositivity} can be regarded as a supporting evidence of the following conjecture for the line bundle $[Y]$ on $X$ which corresponds to a divisor $Y$. 

\begin{conjecture}[{\cite[Conjecture 1.1]{K4}}]\label{conjecture:main}
Let $X$ be a complex surface and $Y$ be a compact smooth curve holomorphically embedded in $X$ such that the normal bundle is topologically trivial. 
The line bundle $[Y]$ admits a $C^\infty$ Hermitian metric with semi-positive curvature if and only if the pair $(Y, X)$ is of type $(\beta)$. 
\end{conjecture}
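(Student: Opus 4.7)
The plan is to treat the two implications separately. For the direction ``type $(\beta) \Rightarrow$ semi-positive'', I would exploit that a type $(\beta)$ pair admits a fundamental system of pseudoflat neighborhoods of $Y$. By Ueda's theory there exists a continuous function $\Phi \colon V \to \mathbb{R} \cup \{\infty\}$, pluriharmonic on $V \setminus Y$, with logarithmic growth $\Phi(p) = -\log \text{dist}(p, Y) + O(1)$. Letting $s$ denote the canonical section of $[Y]$, the singular Hermitian metric with weight $\Phi$ has vanishing curvature on $V \setminus Y$; a Richberg-type regularization near $Y$ combined with a partition-of-unity gluing to a smooth reference metric on $X \setminus V'$ (for some $V' \Subset V$) should produce a $C^{\infty}$ metric on $[Y]$ with everywhere semi-positive curvature.

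For the converse I would rule out the other two Ueda types separately. Type $(\alpha)$ is already handled by the first author's earlier work \cite{K2}: Ueda's Theorem 2 constrains the growth of plurisubharmonic functions on type $(\alpha)$ neighborhoods in a way incompatible with a smooth semi-positively curved Chern weight of $[Y]$. So the core remaining task is to show that for every type $(\gamma)$ pair $(Y, X)$ the line bundle $[Y]$ is \emph{not} semi-positive. Corollary \ref{cor:semipositivity} establishes exactly this under the existence of the holonomy-inducing foliation $\mathcal{F}$ and the structural conditions of Theorem \ref{thm:main}; the conjecture asks one to drop these hypotheses.

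The main obstacle is precisely this last step. In the foliated case, Theorem \ref{thm_ph} derives a contradiction via P\'{e}rez-Marco's hedgehog attached to the holonomy of $\mathcal{F}$: the hedgehog is a completely invariant compact set, a generic leaf of $\mathcal{F}$ inside the saturated set is dense, and a psh weight for a semi-positive metric would violate the maximum principle along that dense leaf. A natural strategy to remove the foliation hypothesis is to extract a foliation-like object from the metric itself: since $c_{1}([Y])|_{Y} = \deg N_{Y/X} = 0$, a smooth semi-positive curvature form $\omega$ for $[Y]$ must satisfy $\omega|_{Y} \equiv 0$, which forces $T_{Y} \subset \ker \omega$ along $Y$; analyzing this kernel distribution in a small neighborhood of $Y$ should yield either a genuine non-singular holomorphic foliation containing $Y$ as a leaf (reducing back to Theorem \ref{thm_ph}) or at least enough geometric structure to simulate a hedgehog argument directly from the Ueda cocycle of the type $(\gamma)$ pair. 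Establishing one of these alternatives rigorously — in particular, controlling the integrability and extension of the kernel distribution away from $Y$, or constructing a foliation-free analogue of Theorem \ref{thm:hedgehog} adapted to the Ueda normal form — is where I expect the decisive difficulty to lie.
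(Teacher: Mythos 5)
The statement you are asked about is not proved in the paper: it is stated as an open conjecture (Conjecture 1.1 of \cite{K4}), and the paper only offers Corollary \ref{cor:semipositivity} --- which assumes the existence of the foliation $\mathcal{F}$ and the structural hypotheses of Theorem \ref{thm:main} --- as supporting evidence. Your proposal is therefore, by your own admission, not a proof either: the decisive step, showing that $[Y]$ is not semi-positive for an arbitrary type $(\gamma)$ pair with no foliation present, is exactly the open problem, and your suggested route via the kernel distribution of the curvature form does not close it. The kernel of a smooth semi-positive $(1,1)$-form is in general neither of constant rank, nor holomorphic, nor integrable off $Y$, so there is no mechanism producing a non-singular holomorphic foliation with $Y$ as a leaf (indeed, if such a reduction existed, the conjecture would essentially follow from Theorem \ref{thm_maincor}, and the authors would not have left it as a conjecture). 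Likewise there is no known foliation-free analogue of Theorem \ref{thm:hedgehog} built only from the Ueda cocycle; the hedgehog machinery acts on a transversal through the holonomy group, which has no substitute in the general type $(\gamma)$ situation.

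Two smaller points on the parts you do sketch. For ``type $(\beta)\Rightarrow$ semi-positive'' (which holds without any foliation hypothesis), a partition-of-unity gluing of a flat metric near $Y$ with a reference metric on $X\setminus V'$ does not preserve semi-positivity of the curvature; the paper instead uses the unitary flat metric $h_V$ on $[Y]|_V$ furnished by the type $(\beta)$ condition together with the singular flat metric $h_{\rm sing}$ determined by $|f_Y|^2_{h_{\rm sing}}\equiv 1$, and glues them by the regularized minimum construction as in \cite[Corollary 3.4]{K1}, which does keep the curvature semi-positive. For type $(\alpha)$, your appeal to \cite{K2} via Ueda's Theorem 2 matches what the paper does (and in the foliated Case IV the paper substitutes Theorem \ref{thm_psh} for \cite[Theorem 2]{U}); this part is fine, but it does not touch the genuinely open type $(\gamma)$ case.
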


\vskip3mm
{\bf Addendum. } Main part of this paper is based on our preprint \cite{KO} uploaded first in 2018. 
Here we report some progress concerning on Conjecture \ref{conjecture:main} after that. 
In 2020, the first author obtained some sufficient conditions for this conjecture to be affirmative \cite{K5}. 
Just after that, Takeo Ohsawa pointed out in \cite[Remark 5.2]{O} that one of our sufficient conditions combined with Siu's solution of Grauert--Riemenschneider conjecture implies that Conjecture \ref{conjecture:main} is affirmative when $X$ is a compact surface. 
In 2021, it has been shown that Conjecture \ref{conjecture:main} is affirmative when $X$ is a compact K\"ahler manifold \cite[Corollary 1.5]{K6}, in the proof of which our technique concerning on the application of the hedgehog theory in the present paper has been applied. Here we emphasize that, in the present paper, we assume neither the compactness nor K\"ahlerity for the manifold $X$. 

\vskip3mm
{\bf Plan of the paper. } The organization of the paper is as follows. 
In \S 2, we review some fundamental results on linearization theorems, Siegel compacta, and Ueda theory.  
In \S 3, we divide the situation into ten cases and state a variant of some of our main results as Theorem \ref{thm:main_oldtype}. 
We prove several cases in this theorem, which directly follow from known results. 
Moreover, we describe typical examples in some cases. 
The main part of this paper is \S4. 
First, we show Theorem \ref{thm_ph}. 
Theorem \ref{thm_maincor} is deduced from this result. 
The proofs of Theorem \ref{thm:main_oldtype} and Theorem \ref{thm:main} are also given here. 
In \S 5, we prove Theorem \ref{thm_psh}. 
In \S 6, we prove Corollary \ref{cor:semipositivity}. 
In Appendix, we discuss the dynamical behavior around non-linearizable hedgehogs and give a slightly modified proof of the existence theorem of common hedgehogs by commuting local holomorphic diffeomorphisms (\cite[Thm.III.14]{P5}). 

\vskip3mm
{\bf Acknowledgment. } 
The authors are grateful to Professor Eric Bedford for informing us about P\'{e}rez-Marco's theory on Siegel compacta. 
We also would like to thank Professor Tetsuo Ueda for valuable comments and suggestions. 
Especially, some of the main theorem could be improved by using Lemma \ref{lem_parabolic}, which we are taught by Professor Tetsuo Ueda.


\section{Preliminaries}
\subsection{Linearization theorems and Siegel compacta} 

In this section, we review some basic properties of linearizability of local holomorphic diffeomorphisms around fixed points. 
Let $f(z)=\lambda z+O(z^2)$ be a local holomorphic diffeomorphism fixing $0$. 
We say that $f$ is \textit{linearizable at $0$} if 
there exist open neighborhoods $U, V$ of $0$, and a biholomorphism $h\colon U\to V$ such that 
$(h\circ f \circ h^{-1}) (z) = \lambda z$. 
It is classically known that the linearizability of $f$ depends on the choice of $\lambda$. 
If $|\lambda|\neq 0,1$, 
then $f$ is linearizable (Koenigs' linearization theorem). 
On the other hand,  there are some obstructions for the case that $|\lambda|=1$. 
The fixed point $0$ is said to be \textit{rationally indifferent} 
(resp. \textit{irrationally indifferent}) 
if $|\lambda|=1$ and $\lambda$ is torsion (resp. non-torsion). 
In the irrationally indifferent case, $f$ is linearizable if the argument $(\log{\lambda})/2\pi \sqrt{-1}$ 
satisfies the Diophantine condition (Siegel's linearization theorem \cite{S}). 
This condition can further be sharpened to the Brjuno condition. 
The maximal linearization domain is biholomorphic to the unit disk, 
on which $f$ is analytically conjugate to a rotation. 
This is called the Siegel disk of $f$. 
Obviously, the domain is completely invariant under $f$, i.e. invariant by both of $f$ and $f^{-1}$. 

On the other hand, in \cite{P4}, P\'{e}rez-Marco showed the existence of completely invariant sets 
(not necessarily linearizable domains) around an indifferent fixed point. 
A Jordan domain $U$ with $C^1$-boundary is said to be \textit{admissible for $f$} if  
$f$ and $f^{-1}$ are defined and univalent on an open neighborhood of the closure $\overline{U}$ of $U$.  
\begin{theorem}[{\cite[Theorem 1.1]{P4}}]\label{siegel} 
Let $f(z)=\lambda z+O(z^2)$ be a local holomorphic diffeomorphism with the indifferent fixed point $0$.  
Let $U$ be an admissible neighborhood of $0$. 
Then there exists a subset $K$ in $\mathbb{C}$ which satisfies the following conditions$\colon$ 
\begin{itemize}
\item[(i)] $K$ is compact, connected, $\mathbb{C}\backslash K$ is connected, 
\item[(ii)] $0\in K \subset \overline{U}$, 
\item[(iii)] $K\cap \partial U \neq \emptyset$, and 
\item[(iv)] $K$ is completely invariant under $f$, i.e. $f(K)=f^{-1}(K)=K$.
\end{itemize}
Moreover, assume that $f$ is not of finite order. 
Then, $f$ is linearizable if and only if $0\in \text{Int}\, K$. 
\end{theorem}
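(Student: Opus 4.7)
The plan is to construct $K$ via a Zorn's lemma argument applied to a suitable family of compact full continua in $\overline{U}$, following the strategy pioneered by Pérez-Marco. Since $U$ is admissible, I may assume $f$ and $f^{-1}$ are univalent on an open neighborhood of $\overline{U}$. Let $\mathcal{E}$ denote the collection of all compact, connected, full (i.e.\ with connected complement in $\hat{\mathbb{C}}$) subsets $E\subset\overline{U}$ such that $0\in E$, $E\cap\partial U\neq\emptyset$, and $f(E)\cup f^{-1}(E)\subset\overline{U}$. Equip $\mathcal{E}$ with the Hausdorff metric. Since $\overline{U}\in\mathcal{E}$, the family is nonempty; and each defining condition is closed under Hausdorff limits (using continuity of $f^{\pm 1}$ on a neighborhood of $\overline{U}$, and the fact that Hausdorff-limits of full continua are full continua), so $\mathcal{E}$ is compact. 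By Zorn's lemma applied to $(\mathcal{E},\supseteq)$, choose a minimal element $K$.

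Next I would show $K$ is completely $f$-invariant, which together with its membership in $\mathcal{E}$ yields (i)--(iv). Let $\widetilde{K}$ be the filled connected component of $0$ in $K\cap f(K)\cap f^{-1}(K)$ (where ``filled'' means one adds back all bounded components of the complement). One checks readily that $\widetilde{K}$ is a compact, connected, full subset of $\overline{U}$ containing $0$ and satisfying $f(\widetilde{K})\cup f^{-1}(\widetilde{K})\subset \overline{U}$. The substantive point is to verify $\widetilde{K}\cap\partial U\neq\emptyset$: once this is established, $\widetilde{K}\in\mathcal{E}$ and $\widetilde{K}\subset K$, so minimality forces $\widetilde{K}=K$, which in turn forces $f(K)=f^{-1}(K)=K$.

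For the moreover part, I would split into the two implications. If $f$ is linearizable, the closure of a small invariant topological disk produced by the linearization (intersected suitably with $\overline{U}$, possibly after first shrinking $U$ within the linearization domain and reapplying the above construction) yields a completely invariant continuum with $0$ in its interior. Conversely, suppose $0\in\mathrm{Int}(K)$ for some completely invariant compact $K\subset\overline{U}$. Then all iterates $f^n$, $n\in\mathbb{Z}$, are defined and uniformly bounded on a neighborhood of $0$, hence form a normal family. A standard Kakutani--Yosida averaging argument against the sequence $\lambda^{-n}f^n$ yields a holomorphic fixed-point of the conjugation action that linearizes $f$ (one uses here that $f$ is not of finite order to ensure nondegeneracy of the Cesàro limit).

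The principal obstacle is the connectedness step $\widetilde{K}\cap\partial U\neq\emptyset$. This cannot be proved by elementary intersection arguments, because $K\cap f(K)$ may well fail to be connected even though both factors are; and one must rule out that the component of $0$ collapses into $U$ after a single operation. Pérez-Marco's resolution exploits the indifferent character of the fixed point via a potential-theoretic comparison: uniformizing $\hat{\mathbb{C}}\setminus K$ by $\hat{\mathbb{C}}\setminus\overline{\mathbb{D}}$ and transporting the dynamics, one shows that if the continuum did not reach $\partial U$ then a strict decrease of logarithmic capacity (or, equivalently, of the conformal radius measured from $0$) would follow at each iteration, contradicting the conformal invariance forced by univalence of $f$. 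Carrying this capacity/conformal-radius argument through is the technical heart of the construction, and it is the one step where irrationally indifferent dynamics interacts nontrivially with the global topology of the invariant set.
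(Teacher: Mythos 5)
The paper itself offers no proof of this statement: it is imported verbatim from P\'erez-Marco \cite[Theorem 1.1]{P4}, and the only internal commentary is the Remark following it, which records that the canonical such set is the \emph{maximal} one, namely $\widehat S$ with $S=\{z\in\overline U\mid f^n(z)\in\overline U \text{ for all } n\in\mathbb{Z}\}$. Measured against that, your proposal has a genuine gap precisely at the step you yourself flag: the claim that the filled $0$-component $\widetilde K$ of $K\cap f(K)\cap f^{-1}(K)$ still meets $\partial U$ is essentially the whole content of the theorem, and it is not proved. The capacity heuristic you offer in its place is not a proof, and its mechanism is not correct as stated: univalence of $f$ near $\overline U$ does not make logarithmic capacity (or conformal radius) invariant, and what P\'erez-Marco actually uses is a Schwarz-lemma/conformal-radius monotonicity at the indifferent fixed point combined with perturbing the multiplier off the unit circle and passing to Hausdorff limits of the invariant sets of the perturbed maps (equivalently, the analytic circle-map machinery); that is where the indifference hypothesis really enters.

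Worse, the overall reduction is unsound: minimality in your family $\mathcal{E}$ cannot force invariance. Take $f(z)=e^{2\pi\sqrt{-1}\,\theta}z$ with $\theta$ irrational, $U$ a round disk centered at $0$, and $K$ a closed radius of $\overline U$. Then $K$ is compact, connected, full, contains $0$, meets $\partial U$, and satisfies $f(K)\cup f^{-1}(K)\subset\overline U$, so $K\in\mathcal{E}$; it is minimal, since a connected subset of a radius containing $0$ and the boundary endpoint is the whole radius; yet $K$ is not invariant, and $K\cap f(K)\cap f^{-1}(K)=\{0\}$, so $\widetilde K=\{0\}$ does not meet $\partial U$. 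Hence the implication ``$K$ minimal $\Rightarrow$ $\widetilde K\in\mathcal{E}$'' is false, and no completion of the flagged step can rescue the argument as structured; the correct construction runs in the opposite direction (the maximal non-escaping set $\widehat S$ of the Remark, or limits of invariant sets of perturbed maps), with all the real work concentrated in showing that set is not reduced to $\{0\}$ and reaches $\partial U$. A smaller point: for the ``moreover'' part, exhibiting some other invariant continuum with $0$ in its interior when $f$ is linearizable does not show $0\in{\rm Int}\,K$ for the set $K$ produced by the existence part, which is what the statement asserts; by contrast, your Ces\`aro-averaging argument for the converse direction is standard and correct (and does not even use the finite-order hypothesis, which is needed only for the other implication).
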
\vspace{-5mm}

\begin{figure}[htbp]
 \begin{center}
  \includegraphics[width=80mm]{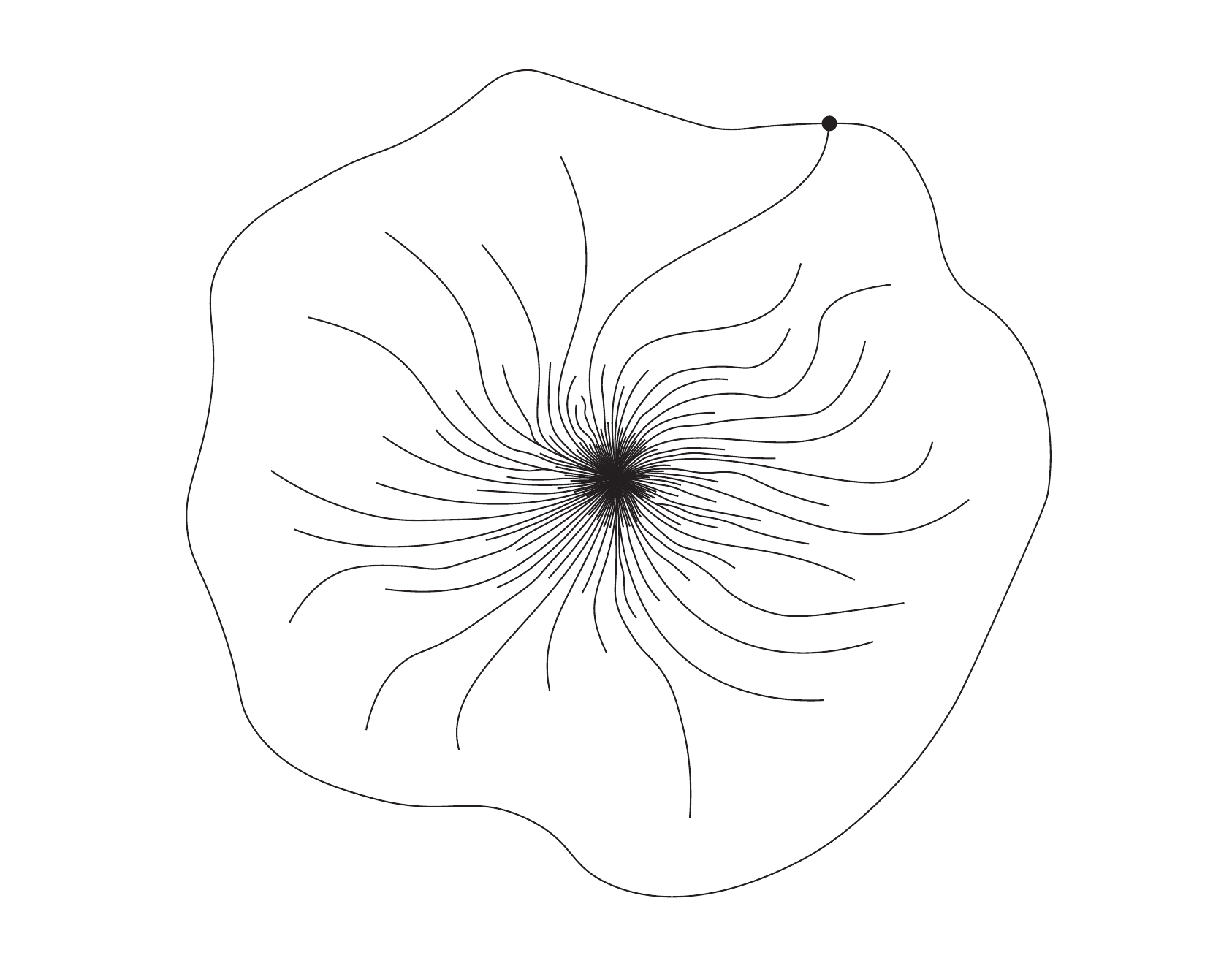}
 \end{center}
  \begin{picture}(0,0)
\put(20,45){\small$K$}
\put(-80,35){\small$U$}
\put(35,180){\small$z_{0}$}
\end{picture}\vspace{-10mm}
 \caption{A hedgehog $K$ of $(U,f)$. }
 \label{fig:one}
\end{figure}

P\'{e}rez-Marco called this completely invariant set \textit{a Siegel compactum}, 
which can be regarded as a degeneration of Siegel disks in some sense. 
The Siegel compactum of the pair $(U,f)$ is denoted by $K_{(U,f)}$. 

\begin{remark}
At least, it is not clear if Theorem \ref{siegel} leads to the uniqueness of the Siegel compactum $K_{(U,f)}$ of the pair $(U,f)$. 
However, there is a canonical choice at least when $U$ is a unit disk. Indeed, there is the maximum of the set of all subset $K$ as in Theorem \ref{siegel}, which is obtained as the connected component of the set 
\[
\{z\in \overline{U}\mid f^n(z)\in \overline{U}~\text{for each integer}\ n\}
\]
which contains the origin $0$. 
\end{remark}

In the irrationally indifferent case, such an invariant set $K$ is called \textit{a hedgehog} 
if it contains a relatively compact linearization domain in $U$ or $f$ is non-linearizable. 
P\'{e}rez-Marco showed the existence of a hedgehog by perturbing the local diffeomorphism $f$ and considering the Hausdorff limit of their Siegel disks. 
According to \cite[Theorem III.8]{P5}, \cite[\S V]{P4}, and \cite[Theorem 5]{P1}, sufficiently near the irrationally indifferent point $0$, 
non-linearizable hedgehogs
have no interior points 
and are not locally connected 
at any point different from $0$. 
By considering the associated analytic circle diffeomorphisms, 
he studied some properties of Siegel compacta and found several applications (see \cite{P4}). 
In particular, with regard to the dynamics on Siegel compacta, 
the following fact is remarkable. 

\begin{theorem}[{\cite[Theorem IV.2.3.]{P4}}]\label{thm:IV.2.3.} 
Let $f$ be as in Theorem \ref{siegel}. 
For $\mu_{K}$-a.e. point $z$ in $K$, the orbit of $z$ is dense in $\partial K$. 
In particular, if $f$ is non-linearizable, the orbit is dense in $K$. 
Here, $\mu_{K}$ is the harmonic measure at $\infty$ of $K$ in $\mathbb{C}P^1$. 
\end{theorem}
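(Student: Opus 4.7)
The plan is to transfer the dynamics of $f$ on $\partial K$ to analytic dynamics on the unit circle via Riemann uniformization of $\mathbb{C}P^{1}\setminus K$, and then to invoke Denjoy's theorem on circle diffeomorphisms with irrational rotation number. Since $K$ is compact and connected with $\mathbb{C}P^{1}\setminus K$ connected, the Riemann mapping theorem gives a unique conformal isomorphism $\Phi\colon \mathbb{C}P^{1}\setminus K\to \mathbb{C}P^{1}\setminus\overline{\mathbb{D}}$ with $\Phi(\infty)=\infty$ and $\Phi'(\infty)>0$. A classical fact from potential theory is that the harmonic measure $\mu_{K}$ at $\infty$ is precisely the pushforward, via the a.e.-defined radial boundary values of $\Phi^{-1}$, of normalized Lebesgue measure on $\partial\mathbb{D}$, so from the start one can reformulate the conclusion as a statement about a full Lebesgue-measure subset of $\partial\mathbb{D}$.

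The key step is to produce an analytic circle diffeomorphism from $f$. Because $K$ is completely invariant under $f$ and $f$ is defined on a neighbourhood of $K$, the conjugate $\tilde f:=\Phi\circ f\circ\Phi^{-1}$ is a univalent holomorphic self-map of some outer annulus $\{1<|z|<1+\varepsilon\}$ satisfying $|\tilde f(z)|\to 1$ as $|z|\to 1$. Schwarz reflection across $\partial\mathbb{D}$ then extends $\tilde f$ to a holomorphic diffeomorphism on a neighbourhood of $\partial\mathbb{D}$, so that $g:=\tilde f|_{\partial\mathbb{D}}$ is a real-analytic circle diffeomorphism. Next one identifies the rotation number of $g$ with $\theta$, where $\lambda=e^{2\pi\sqrt{-1}\theta}$ is the multiplier of $f$ at $0$; a natural route is to approximate $\partial K$ by the smooth level curves of $\log|\Phi|$ in $\mathbb{C}\setminus K$, compute the rotation number of the induced dynamics on each such curve by a winding-number/lifting argument, and pass to the limit as the curves collapse onto $\partial K$. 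In the irrationally indifferent case $\theta$ is irrational, and Denjoy's theorem (applicable since $g$ is $C^{\omega}\subset C^{2}$) then forces $g$ to be topologically conjugate to the rigid rotation $R_{\theta}$, hence minimal and uniquely ergodic on $\partial\mathbb{D}$ with respect to Lebesgue measure.

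Finally, for Lebesgue-a.e.\ $\zeta\in\partial\mathbb{D}$ the radial limit $z=\Phi^{-1}(\zeta)\in\partial K$ exists by Fatou's theorem, and the commutation $\Phi\circ f=g\circ \Phi$ extends to such boundary points: the $f$-orbit of $z$ is the $\Phi^{-1}$-image of the $g$-orbit of $\zeta$, which by minimality is dense in $\partial\mathbb{D}$. Density transfers back to $\partial K$, and since $\mu_{K}$ is the pushforward of Lebesgue measure and is supported on $\partial K$, this gives the first claim. For the ``in particular'' part, when $f$ is non-linearizable the earlier-cited results of P\'{e}rez-Marco ensure $\mathrm{Int}\,K=\emptyset$, so $K=\partial K$ and the same orbits are dense in all of $K$.

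The principal obstacle is the measure-theoretic boundary behaviour of $\Phi$: in the non-linearizable hedgehog regime $\partial K$ is typically not locally connected, so $\Phi^{-1}$ admits no continuous extension to $\partial\mathbb{D}$ and the transfer of density from $\partial\mathbb{D}$ to $\partial K$ has to be executed through a.e.-defined radial limits. Justifying that a dense $g$-orbit on $\partial\mathbb{D}$ maps, under this a.e.\ map, to a set dense in $\partial K$ — rather than merely to a set meeting $\partial K$ in a small cluster — is the technical heart of the proof and relies on prime-end theory together with the fact that the conformal welding $\Phi$ preserves the measure class responsible for $\mu_{K}$.
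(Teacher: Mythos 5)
The paper itself does not prove this statement; it is quoted from P\'erez-Marco's \emph{Fixed points and circle maps} [P4], and your outline follows the same strategy as that source (uniformize $\mathbb{C}P^1\setminus K$, reflect across $\partial\mathbb{D}$ to get an analytic circle diffeomorphism $g$, and transfer the dynamics through harmonic measure). Within that strategy, however, there are two genuine gaps. The first is the identification of the rotation number of $g$ with $\theta$: this is one of the central theorems of [P4], and it cannot be obtained by the route you sketch. The level curves of $\log|\Phi|$ are not invariant under $\tilde f$, so there is no ``induced dynamics'' on such a curve whose rotation number you could compute and then pass to the limit; the winding-number heuristic does not produce an argument. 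You should either cite P\'erez-Marco's theorem explicitly or give a genuine proof, noting that for the present statement all you need is that the rotation number is irrational --- but even that is not cheap (a rational rotation number cannot be excluded by soft considerations, precisely because the boundary correspondence is not continuous).

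The second gap is the one you yourself call the technical heart, and it is not closed by what you wrote. Minimality of $g$ on $\partial\mathbb{D}$ does not transfer: the boundary map $\zeta\mapsto z_\zeta$ given by radial limits is only defined a.e.\ and is badly discontinuous when $\partial K$ is not locally connected, so a dense $g$-orbit upstairs could a priori have an image accumulating only on a small part of $\partial K$. Moreover, your appeal to ``uniquely ergodic with respect to Lebesgue measure'' is incorrect as stated: the unique $g$-invariant measure is in general singular with respect to Lebesgue (Lebesgue measure is invariant only in the smoothly linearizable case). The standard repair, which is essentially P\'erez-Marco's, is measure-theoretic rather than topological: (i) every nonempty relatively open piece $B\cap\partial K$ has positive harmonic measure, since $K$ is a continuum and hence non-polar at every scale near each of its points (Beurling-type estimate), so $A_B=\{\zeta\in\partial\mathbb{D}:\ z_\zeta\ \text{exists and lies in}\ B\}$ has positive Lebesgue measure because $\mu_K$ is the push-forward of normalized Lebesgue measure; (ii) an analytic (indeed $C^{1+\mathrm{BV}}$) circle diffeomorphism with irrational rotation number is \emph{ergodic for the Lebesgue measure class} (Denjoy--Koksma/Katok--Herman), so $\bigcup_{n\in\mathbb{Z}}g^{-n}(A_B)$ has full Lebesgue measure; (iii) intersecting over a countable basis of open sets meeting $\partial K$ and using the a.e.\ equivariance $z_{g(\zeta)}=f(z_\zeta)$ (Lindel\"of's theorem, since $g$ carries radii to arcs landing non-tangentially), one concludes that for Lebesgue-a.e.\ $\zeta$, hence for $\mu_K$-a.e.\ $z$, the $f$-orbit of $z$ meets every basic open set touching $\partial K$. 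Your final ``in particular'' step (empty interior of non-linearizable hedgehogs, so $K=\partial K$) is fine, but without (i) and (ii) the main claim does not follow from the argument as proposed.
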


We mention some results about commuting holomorphic germs fixing the origin $0$. 

\begin{proposition}[e.g. {\cite[\S I.4]{P2}}]\label{prop:comm}
Let $f(z)=\lambda z+O(z^2)$, $g(z)=\mu z+O(z^2)$ be local holomorphic diffeomorphisms fixing $0$ which satisfy $f\circ g=g\circ f$. 
Assume that $\mu \in \mathbb{C}^{\ast}$ is non-torsion, i.e. $\mu^{m}\neq 1$ for any $m>0$. 
Then the following statements hold. 
\begin{enumerate}
\item If $g$ is linearizable at $0$, 
then $f$ is also linearizable by the linearization map of $g$. 
\item If $g$ is non-linearizable at $0$, then $\lambda=e^{2\pi\sqrt{-1}\theta}$ for some $\theta\in \mathbb{R}$. 
Moreover, \vspace{2pt} 
\begin{enumerate}
\item $f$ is linearizable at $0$ if $\theta \in \mathbb{Q}$, 
\item $f$ is non-linearizable at $0$ if $\theta \in \mathbb{R}\setminus \mathbb{Q}$.  
\end{enumerate}
\end{enumerate}
\end{proposition}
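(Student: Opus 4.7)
The plan is to base everything on two moves: transporting the commutation $f \circ g = g \circ f$ through a linearizing conjugacy, and reading off Taylor coefficients to exploit the non-torsion of $\mu$. A convenient lemma to have in hand is the following: if $\phi(z) = \alpha z + O(z^2)$ and $\psi(z) = \beta z + O(z^2)$ commute and $\beta$ is non-torsion, then expanding $\phi(\beta z) = \beta \phi(z)$ coefficient-wise yields $a_k(\beta^{k-1} - 1) = 0$ for all $k \geq 2$, so $\phi(z) = \alpha z$. I will call this the \emph{coefficient-killing lemma}.

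For (1), assume $h \in \text{Diff}(\mathbb{C},0)$ linearizes $g$ as $h \circ g \circ h^{-1}(z) = \mu z$, and set $\tilde{f} := h \circ f \circ h^{-1}$. Then $\tilde{f}$ commutes with the rotation $z \mapsto \mu z$; the coefficient-killing lemma gives $\tilde{f}(z) = \lambda z$, so $h$ simultaneously linearizes $f$.

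For (2), I first show $|\lambda| = 1$. If $|\lambda| \neq 1$ (and $\lambda \neq 0$ since $f$ is a local diffeomorphism), Koenigs' theorem linearizes $f$ via some $h$; the coefficient-killing lemma, applied with $\phi := h \circ g \circ h^{-1}$ and $\beta := \lambda$ (note $|\lambda| \neq 1$ forces $\lambda^{k-1} \neq 1$ for $k \geq 2$), linearizes $g$ as well, contradicting the hypothesis. Hence $\lambda = e^{2\pi\sqrt{-1}\theta}$ for some $\theta \in \mathbb{R}$. For (b), where $\theta \in \mathbb{R}\setminus\mathbb{Q}$ so $\lambda$ is non-torsion, the same argument with the roles of $f$ and $g$ swapped shows that if $f$ were linearizable, so would $g$; the contrapositive gives (b).

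The main obstacle is (a). Here $\lambda^N = 1$ for some $N \geq 1$, and I want to conclude $f$ is linearizable. The strategy is to show $f^N = \mathrm{id}$; then $\langle f \rangle$ is finite, and Bochner averaging via $h(z) := \frac{1}{N}\sum_{j=0}^{N-1} \lambda^{-j} f^j(z)$ produces a linearization with $h \circ f(z) = \lambda h(z)$ and $h'(0) = 1$. Suppose for contradiction $f^N \neq \mathrm{id}$. Since $\lambda^N = 1$, one can write $f^N(z) = z + a z^{n+1} + O(z^{n+2})$ with $n \geq 1$ and $a \neq 0$. The commutation $g \circ f^N = f^N \circ g$, expanded to order $z^{n+1}$, gives on the left $g(z) + g'(z)\cdot a z^{n+1} + O(z^{n+2}) = g(z) + \mu a z^{n+1} + O(z^{n+2})$, and on the right $g(z) + a \cdot g(z)^{n+1} + O(z^{n+2}) = g(z) + a \mu^{n+1} z^{n+1} + O(z^{n+2})$. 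Matching coefficients forces $\mu^n = 1$, contradicting the non-torsion hypothesis on $\mu$. Therefore $f^N = \mathrm{id}$ and the Bochner averaging completes the proof.
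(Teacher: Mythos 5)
Your proof is correct and follows essentially the route the paper indicates (it only sketches and cites P\'erez-Marco): coefficient comparison through the linearizing conjugacy for (1), Koenigs' theorem to force $|\lambda|=1$ in (2), symmetry for (b), and the standard finite-order argument --- showing $f^N=\mathrm{id}$ via the $z^{n+1}$-coefficient of $g\circ f^N=f^N\circ g$ and then Bochner averaging --- for (a). One cosmetic point: your ``coefficient-killing lemma'' should be stated for $\psi(z)=\beta z$ exactly (as your displayed identity $\phi(\beta z)=\beta\phi(z)$ presupposes), since as literally written with $\psi(z)=\beta z+O(z^2)$ it is false (take $\phi=\psi$ non-linearizable); you only ever apply it to the exact linear map after conjugation, so nothing in the argument breaks.
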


The first part of the statement follows by comparing the coefficients of power series of $f$ and $g$. 
The second part is obtained by using Koenigs' linearization theorem. 
The remaining part also follows by the standard arguments. For more details, see e.g. \cite[\S I.4]{P2}. 

In \cite{P2} and \cite{P5}, 
P\'{e}rez-Marco studied hedgehogs for commuting local holomorphic diffeomorphisms. 
We state a slightly weaker version of his result, which is sufficient for our purpose.  
We give a modified proof in Appendix for the convenience of the readers. 

\begin{theorem}[a part of {\cite[Thm.III.14]{P5}}]\label{Thm.III.14}
Let $f(z)=\lambda z+O(z^2)$ and $g(z)=\mu z+O(z^2)$ be local holomorphic diffeomorphisms 
with an irrationally indifferent fixed point at $0$ which satisfy $f\circ g = g\circ f$.  
Assume that $g$ is non-linearizable at $0$.
Then, for any open neighborhood $W$ of $0$, 
there exists a compact subset $K$ in $W$ 
which is a common hedgehog of $f$ and $g$. 
More precisely, there exist admissible neighborhoods $U$ and $V$ in $W$ 
for $f$ and $g$ respectively such that 
$K_{(U,f)}=K_{(V,g)}$ holds.
\end{theorem}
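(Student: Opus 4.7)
The plan is first to reduce to the case where both $f$ and $g$ are non-linearizable, then to identify the hedgehog of $g$ as the required common Siegel compactum. By Proposition \ref{prop:comm}(2)(b), the non-linearizability of $g$, together with $f \circ g = g \circ f$ and the irrationality of the rotation number of $f$, forces $f$ to be non-linearizable at $0$ as well.

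Given $W$, I would first shrink $W$ so that $f^{\pm 1}$ and $g^{\pm 1}$ are defined and univalent on a neighborhood of $\overline{W}$, and then choose an admissible Jordan domain $V \Subset W$ for $g$ with the further property that $f(\overline{V})\cup f^{-1}(\overline{V})\subset W$. Let $K := K_{(V,g)}$ be the Siegel compactum provided by Theorem \ref{siegel}. Since $g$ is non-linearizable, $K$ has empty interior, $0 \in \partial K$, and by Theorem \ref{thm:IV.2.3.} there exists $x_0 \in K$ whose $g$-orbit is dense in $K$.

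The main step is to show $f(K) = K$. Using $g^n \circ f = f \circ g^n$ together with the density of the $g$-orbit of $x_0$ and continuity of $f$,
\[
f(K) \;=\; f\bigl(\overline{\{g^n(x_0)\}_{n\in\mathbb{Z}}}\bigr) \;=\; \overline{\{g^n(f(x_0))\}_{n\in\mathbb{Z}}},
\]
so $f(K)$ is itself a compact connected $g$-completely invariant subset of $W$ containing $0$ with connected complement. To promote this to the equality $f(K)=K$, I would analyze the Riemann uniformization $\phi\colon \mathbb{C}P^1 \setminus K \to \{|w|>1\}$ normalized at infinity: by Schwarz reflection $g$ conjugates to an analytic orientation-preserving circle diffeomorphism on $\partial \mathbb{D}$ with irrational rotation number, whose dynamics is minimal by Denjoy's theorem. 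The commutation of $f$ with $g$ then places the boundary dynamics induced by $f$ in the centralizer of this minimal circle map, which is rigid enough to force $f$ to preserve the prime-end boundary of $K$, hence $K$ itself, rather than merely a strictly larger $g$-invariant set.

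Once $f(K)=K$ is established, $K$ is a compact connected $f$-completely invariant set containing $0$ with connected complement. Choosing an admissible Jordan domain $U\subset W$ for $f$ with $K \subset \overline{U}$ and $K \cap \partial U \neq \emptyset$, the maximality characterization of Siegel compacta (the remark following Theorem \ref{siegel}) yields $K = K_{(U,f)}$, as required. The hard part will be the rigidity step identifying $f(K)$ with $K$ itself; this is where the theory of commuting analytic circle diffeomorphisms of irrational rotation number enters essentially, and is the point at which the non-linearizability of $g$ is used in a genuinely dynamical way through the minimality of the boundary circle map and Theorem \ref{thm:IV.2.3.}.
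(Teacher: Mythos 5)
Your reduction to the case where $f$ is also non-linearizable and your choice of domains are fine, but the step you yourself flag as ``the hard part'' --- proving $f(K)=K$ for $K:=K_{(V,g)}$ --- is a genuine gap, and in fact the claim is false in general. The Siegel compactum is attached to the \emph{pair} $(V,g)$, not to the germ $g$ alone: since $f$ and $g$ commute, $f(K)$ is a hedgehog of $(f(V),g)$, a different admissible domain, and $K\cap\partial V\neq\emptyset$ means points of $K$ on $\partial V$ may well be sent by $f$ outside $\overline{V}\supset K$. So the best one can hope for is a nesting relation between $K$ and $f(K)$, not equality. Your proposed rigidity argument cannot repair this, and is moreover circular: to conjugate the dynamics of $f$ to an analytic circle map on $\partial\mathbb{D}$ via the uniformization of $\mathbb{C}P^1\setminus K$ (Schwarz reflection, Denjoy, centralizer of a minimal rotation-number map), you already need $f$ to preserve $\mathbb{C}P^1\setminus K$ near $K$, i.e.\ essentially $f(K)=K$, which is exactly what you are trying to prove. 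No actual argument is given for why the centralizer is ``rigid enough''; the obstruction is geometric (domain-dependence of the hedgehog), not dynamical.

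The paper's route (following P\'erez-Marco) supplies precisely the missing ingredients. First, Lemma \ref{lem_a} --- proved with the quasi-invariant-curve machinery of Proposition \ref{propII3} and Lemma \ref{hdf} --- shows that any two hedgehogs of the \emph{same} non-linearizable germ $g$ for different admissible domains are nested; applied to $K_r$ and $f(K_r)$ this gives, after possibly replacing $f$ by $f^{-1}$, the inclusion $f(K_r)\subset K_r$ (not equality). One then takes $L=\bigcap_{n\geq 0}f^{n}(K_r)$, which is automatically compact, connected, full, contains $0$, and is invariant under $f^{\pm1}$ and $g^{\pm1}$; the only delicate point is $L\neq\{0\}$, which is ruled out by Theorem \ref{pm12} (orbits of a non-linearizable irrationally indifferent germ never converge to $0$), using that $f$ is also non-linearizable by Proposition \ref{prop:comm}. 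Your proposal contains no substitute for either the nesting lemma or the intersection-plus-nontriviality argument, so as it stands it does not prove the theorem.
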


\begin{proof}[Proof of Theorem \ref{thm:hedgehog}] 
For any neighborhood $V$ of $0$, we take a sufficiently small open neighborhood $U$ which is admissible for $f$ and $g$. 
If necessary, we will replace a smaller neighborhood. 
First, we assume that $g$ is linearizable at $0$. 
Proposition \ref{prop:comm} (1) shows that $f$ is also linearizable by the linearization map of $g$. 
Thus, a Siegel disk $K\subset \overline{U}$ of $g$ is also invariant under $f$. 
The subset $K$ satisfies the statement (i) and (ii). 
Since $g|_{K}$ is conjugate to the irrational rotation, the statement (iv) follows in this case. 

Second, we assume that $g$ is non-linearizable at $0$. 
By Theorem \ref{siegel}, there exists a hedgehog $K\subset \overline{U}$ of $(U,g)$. 
If $\theta$ is a rational number, then by Proposition \ref{prop:comm} (2) (a), $f$ is conjugate to the rational rotation on a domain $W\ni 0$, in particular, $f|_{W}$ is of finite order, i.e. $(f|_{W})^{N}=\textrm{id}$ for some $N>0$. 
As replacing a smaller neighborhood $U$ if necessary, we may assume that $\overline{U}$ is contained in the invariant domain $W$ of $f$. 
Since $f$ and $g$ commute, $f(K)$ is invariant under $g$, and it is a hedgehog of $(f(U),g)$. 
By applying Lemma \ref{lem_a} to $K$ and $f(K)$, we have $K\subset f(K)$ or $f(K)\subset K$. 
We may assume that $f(K)\subset K$ by replacing $f$ with its inverse if necessary. 
Iterating $f$, we have 
\[
K \supset f(K) \supset f^{2}(K) \supset \cdots \supset f^{N}(K)=K
\]
for some $N$, so that $f(K)=f^{-1}(K)=K$. 
Thus, the subset $K$ satisfies the statement (i) and (iii) by Theorem \ref{siegel}. 
If $\theta$ is an irrational number, Theorem \ref{Thm.III.14} show the existence of a common hedgehog  $K'$ of $(U'; f,g)$, $U'\subset U$, which satisfies the statement (i) and (iii). 
The remaining case of the statement (iv)  follows directly from Theorem \ref{thm:IV.2.3.}.
\end{proof}


\subsection{Review of Ueda's neighborhood theory}\label{section:review_uedatheory}

Let $X$ be a complex surface and $Y$ a compact curve with the topologically trivial normal bundle $N_{Y/X}$. 
Fix a finite open covering $\{U_j\}$ of $Y$. 
Since $Y$ is compact and K\"{a}hler,  $N_{Y/X}$ is $\textrm{U}(1)$-flat, i.e., 
the transition functions on $\{U_{jk}\}$ can be represented by $\textrm{U}(1)$-valued constant functions $\{t_{jk}\}$ ($\textrm{U}(1)=\{t\in\mathbb{C}\mid |t|=1\}$, see \cite[\S 1]{U}). 
Here $U_{jk}=U_{j}\cap U_{k}$. 
Take an open neighborhood $V_{j}$ of $U_{j}$ in $X$ and set $V:=\textstyle\bigcup_jV_j$. 
As shrinking $V_{j}$, we can choose the defining function $w_{j}$ of $U_{j}$ in $V_{j}$ such that $(w_j/w_k)|_{U_{jk}}\equiv t_{jk}$. 

For a system of such defining functions, 
the expansion of $t_{jk}w_k|_{V_{jk}}$ in the variable $w_j$ is written as 
\[
t_{jk}w_k=w_j+f_{jk}^{(n+1)}(z_j)\cdot w_j^{n+1}+O(w_j^{n+2})
\]
for $n\geq 1$. Such a system is said to be \textit{of type $n$}.
Then it follows that $\{(U_{jk}, f^{(n+1)}_{jk})\}$ satisfies the cocycle conditions (see \cite[\S 2]{U}). 
Consider the cohomology class 
\[
u_n(Y, X):=[\{(U_{jk}, f^{(n+1)}_{jk})\}]\in H^1(Y, N_{Y/X}^{-n}), 
\]
which is called \textit{the $n$-th Ueda class of $(Y,X)$}.
The $n$-th Ueda class is an obstruction to existence of a system of type $(n+1)$. 
Indeed, it is not difficult to see that 
a type $n$ system can be refined to be of type $(n+1)$ if (and only if) $u_n(Y, X)=0$. 
Therefore, the following two cases occur: 

\begin{itemize}
  \setlength{\parskip}{0cm} 
  \setlength{\itemsep}{0cm} 
\item 
There exists a positive integer $n$ such that the following holds:\\
For any $m\leq n$, there is a defining system of type $m$ such that 
$u_m(Y, X)=0$ for $m<n$ and $u_n(Y, X)\not=0$. 
\item 
For any positive integer $n$, there exists a defining system of type $n$ such that $u_n(Y, X)=0$. 
\end{itemize}
In the former case, the pair $(Y,X)$ is said to be of \textit{finite type} or \textit{of type ($\alpha$)}
(more precisely, \textit{of type $n$}). 
The latter case, we say, the pair $(Y,X)$ is \textit{infinite type}. 
For example, if $Y$ admits a \textit{holomorphic tubular neighborhood} in $X$, 
then $(Y,X)$ is infinite type. Here a holomorphic tubular neighborhood means 
a neighborhood of $Y$ in $X$ which is biholomorphic to that of the zero section of the normal bundle $N_{Y/X}$. 
More generally, consider the case where 
there exists a system $\{w_{j}\}$ as 
$$t_{jk}w_{k}=w_{j}.$$
Namely, the $\textrm{U}(1)$-flat structure on the normal bundle $N_{Y/X}$ can be extended to $[Y]$ around $Y$, 
where $[Y]$ is the line bundle which corresponds to the divisor $Y$, i.e., there exists a neighborhood $V$ of $Y$ in $X$ such that $[Y]|_V$ is $\textrm{U}(1)$-flat. 
In such a case, we say that $(Y,X)$ is {\it of type $(\beta)$}. 
Note that, in this case, $Y$ admits a \textit{pseudoflat} neighborhoods system in $X$, that is, a neighborhoods system with Levi-flat boundary. 
The remaining case is called type $(\gamma)$. 

\begin{remark}\label{rmk:finite_cover}
It does not change 
whether the type is finite or infinite 
after taking a finite covering space of a tubular neighborhood of $Y$, 
though the smallest number $n$ of non-vanishing Ueda classes varies. 
\end{remark}

Ueda showed the following result: 

\begin{theorem}\label{thm:uedathm3}{\rm (\cite[Theorem 3]{U})}
Suppose that the pair $(Y,X)$ is of infinite type. 
If the normal bundle $N_{Y/X}\in{\rm Pic}^0(Y)$ is torsion or satisfies the Diophantine condition (i.e. there exist positive constants $A$ and $\alpha$ such that $d(\mathbf{1}_{Y}, N_{Y/X}^{n})\geq A\cdot n^{-\alpha}$ holds for any positive integer $n$), 
then $Y$ is of type ($\beta$), that is, it admits a pseudoflat neighborhood system in $X$. 
\end{theorem}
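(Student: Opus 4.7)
The plan is to construct, by induction on $n$, a sequence of defining systems $\{w_j^{(n)}\}$ of successively higher type whose differences decay in $n$, and to show that under the hypothesis on $N_{Y/X}$ the sequence converges on a fixed neighborhood of $Y$ to a limit $\{w_j\}$ satisfying $t_{jk} w_k = w_j$ identically. Such a limit system is precisely what is needed to extend the $\textrm{U}(1)$-flat structure of $N_{Y/X}$ to the line bundle $[Y]$ on a neighborhood of $Y$, and hence realizes type $(\beta)$.

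The formal inductive step is straightforward. Given a type-$n$ system with expansion
\[
t_{jk} w_k = w_j + f_{jk}^{(n+1)}(z_j)\, w_j^{n+1} + O(w_j^{n+2}),
\]
the gauge modification $w_j \mapsto w_j - g_j(z_j)\, w_j^{n+1}$ alters the order-$(n+1)$ cocycle by the coboundary $t_{jk}^n g_k - g_j$. Because $(Y,X)$ is assumed to be of infinite type, $u_n(Y,X)=0$, so such a splitting $\{g_j\}$ exists; iterating this gauge fix at higher orders in $w_j$ promotes the system to type $n+1$. Running this procedure yields a formal sequence $\{w_j^{(n)}\}$.

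The main obstacle is carrying out this construction with explicit estimates so that the sequence converges uniformly on a fixed neighborhood of $Y$. I would control $\{g_j\}$ in a sup norm on a slight shrinking of $\{U_j\}$. Expanding the cocycle $\{f_{jk}^{(n+1)}\}$ in the Fourier basis adapted to the flat structure of $N_{Y/X}$, the pointwise bound for the \v{C}ech splitting is governed by small divisors of the form $|1 - \chi(\gamma)^n|^{-1}$, where $\chi\colon \pi_1(Y)\to \textrm{U}(1)$ is the monodromy of $N_{Y/X}$. The Diophantine hypothesis is precisely what ensures that these denominators grow at most polynomially in $n$; combined with the exponential decay of Fourier coefficients of the holomorphic cocycle on a slightly thickened neighborhood, one obtains geometrically summable estimates for the corrections and hence uniform convergence of $\{w_j^{(n)}\}$. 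In the torsion case the small-divisor issue disappears: passing to the unramified cyclic cover $\widetilde{Y}\to Y$ trivializing $N_{Y/X}$ (on which, by Remark \ref{rmk:finite_cover}, the lifted pair is again of infinite type), all relevant denominators are bounded away from zero uniformly in $n$, and one obtains a type-$(\beta)$ structure on the cover which descends, by working equivariantly with respect to the deck group, to the desired flat structure on $[Y]$ near $Y$.
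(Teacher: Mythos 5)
Note first that the paper does not prove Theorem \ref{thm:uedathm3} at all: it is imported from Ueda \cite[Theorem 3]{U}, so your proposal has to be measured against Ueda's original argument. Your outline does follow that same Arnol'd--Siegel--Ueda strategy (refine defining systems order by order using $u_n(Y,X)=0$, control the corrections through small divisors attached to the flat structure of $N_{Y/X}$, and in the torsion case pass to a finite cover), so the route is the right one.

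However, the convergence step, which is the entire analytic content of the theorem, has a genuine gap as written. The cocycle you split at order $n+1$ is not read off from a fixed holomorphic object with exponentially decaying coefficients: it is the order-$(n+1)$ coefficient of the system produced by all the previous gauge changes, so its norm must itself be controlled inductively; moreover each bounded \v{C}ech splitting and each re-expansion in $w_j$ costs a shrinking of the covering in the base direction and of the radius in the fibre direction. Without a scheme that (a) tracks how the gauge modifications propagate into all higher-order coefficients and (b) budgets the total domain loss so that the limit system still lives on a genuine neighborhood of $Y$, the claimed ``geometrically summable estimates'' do not follow; this bookkeeping (a Siegel/majorant-type estimate in which the Diophantine lower bound on the distance from $N_{Y/X}^{-n}$ to the trivial bundle is used summed over all orders) is exactly where Ueda's proof invests its effort. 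Two smaller points: Theorem \ref{thm:uedathm3} is stated for a general compact curve, where there is no Fourier basis; Ueda instead bounds the splitting of a cocycle with values in a unitary flat line bundle $F$ by a constant times the reciprocal of the distance of $F$ from the trivial bundle (your Fourier argument only covers the elliptic case relevant to this paper). And in the torsion case, after passing to the cover the character $\chi$ is trivial, so the quantities $|1-\chi(\gamma)^n|$ are zero rather than bounded away from zero; the correct statement is that the obstructions now lie in $H^1(Y,\mathcal{O}_Y)$ and vanish by the infinite-type hypothesis, and the coboundary equations can then be solved with bounds uniform in $n$ (normalizing against the constants), after which an equivariance or averaging argument descends the resulting flat structure to $Y$.
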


Here $\mathbf{1}_{Y}$ is the holomorphically trivial line bundle on $Y$ and $d$ is the invariant distance on ${\rm Pic}^0(Y)$ defined by Ueda (see \cite[\S 4]{U}). 
As previous results, Arnol'd first studied neighborhoods of elliptic curves embedded 
in a surface with topologically trivial normal bundle (see \cite{A}). 
By regarding it as a kind of linearization problem, 
he applied the technique of Siegel's linearization theorem to this problem. 
Ueda's theorem is a partial generalization of Arnol'd theorem. 

When $(Y,X)$ is of type ($\alpha$), there are some results about 
the existence of strictly plurisubharmonic functions on a neighborhood of $Y$ 
and the constraint of its increasing degree. 

\begin{theorem}\label{thm:uedathm12}{\rm (\cite[Theorem 1, 2]{U})}
Suppose that the pair $(Y,X)$ is of type $n$. 
Then the following hold: 
\begin{enumerate}
\item[(i)] For any real number $a>n$, 
there exist a neighborhood $V$ of $Y$ and 
a strictly plurisubharmonic function $\Phi$ defined on $V\setminus Y$ 
such that $\Phi(p)=O({\rm dist}(p, Y)^{-a})$ as $p\to Y$. 
\item[(ii)] Let $V$ be a neighborhood of $Y$. 
For any positive real number $a<n$ and 
any plurisubharmonic function $\Psi$ defined on $V\setminus Y$ 
such that $\Psi(p)=o({\rm dist}(p, Y)^{-a})$ as $p\to Y$, 
there is a neighborhood $W$ of $Y$ in $V$ such that $\Psi|_{W\setminus Y}$ is constant. 
\end{enumerate}
\end{theorem}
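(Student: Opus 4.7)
The plan is to treat parts (i) and (ii) separately, working throughout with a type-$n$ defining system $\{w_j\}$ on a cover $\{V_j\}$ of a neighborhood of $Y$, for which $t_{jk}w_k=w_j+f_{jk}^{(n+1)}(z_j)w_j^{n+1}+O(w_j^{n+2})$ with $|t_{jk}|=1$. The immediate consequence $|w_k|^{-a}=|w_j|^{-a}+O(|w_j|^{-a+n})$ on overlaps will be used repeatedly, as will the pointwise computation $\partial\bar\partial|w_j|^{-a}=\tfrac{a^2}{4}|w_j|^{-a-2}\,dw_j\wedge d\bar w_j$, which is semipositive of rank one.

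For part (i), I would set $\Phi_0:=\sum_j\rho_j|w_j|^{-a}$ with a smooth partition of unity $\{\rho_j\}$ subordinate to $\{V_j\}$. Because neighboring $|w_j|^{-a}$ agree up to an error of order $|w|^{-a+n}$, the function $\Phi_0$ has size $O({\rm dist}(\cdot,Y)^{-a})$, and in any chart the leading term of $\partial\bar\partial\Phi_0$ dominates both the partition-of-unity derivatives (at worst order $|w|^{-a}$) and the gluing error (order $|w|^{-a+n-2}$); the hypothesis $a>n$ is exactly what is needed to make this domination produce positivity near $Y$. This yields plurisubharmonicity but only semipositivity transverse to the $w$-direction. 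To upgrade to strict plurisubharmonicity I would add a correction such as a small multiple of a second construction $\sum_j\rho_j|w_j|^{-a'}$ with $n<a'<a$ coupled to a factor with nontrivial $z$-dependence, or use a fixed ambient K\"ahler form on $X$ to compensate along $Y$, and verify that the resulting Levi form is full-rank positive while preserving the $O({\rm dist}(\cdot,Y)^{-a})$ growth.

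For part (ii), I would expand $\Psi$ into angular Fourier modes in the approximate ${\rm U}(1)$-direction of the flat normal bundle. Writing $w_j=r_je^{i\theta_j}$, let $\Psi_j^{(k)}(z_j,r_j)$ be the $k$-th angular Fourier coefficient of $\Psi|_{V_j}$. Subharmonicity of $\Psi$ on each transverse disc $\{z_j=\mathrm{const.}\}$ gives convexity/monotonicity estimates for $\Psi_j^{(k)}$ in $\log r_j$, and combined with $\Psi=o({\rm dist}(\cdot,Y)^{-a})$ for $a<n$ this forces sharp decay of each $\Psi_j^{(k)}$ as $r_j\to 0$. Matching the change-of-chart formula for $\Psi_j^{(k)}$ order by order in $r_j$ realizes, at each order $m\le n$, a \v{C}ech $1$-cochain with values in $N_{Y/X}^{-m}$; by hypothesis $u_m(Y,X)=0$ for $m<n$, so one can inductively trivialize these coefficients, and at order $n$ the nontriviality of $u_n(Y,X)$ kills any remaining nonzero contribution. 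Consequently $\Psi$ is independent of the $\mathrm{U}(1)$-angle near $Y$, and the decay hypothesis together with the maximum principle along the discs $\{z_j=\mathrm{const.}\}$ forces it to be constant on a neighborhood of $Y$.

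The hard step will be the inductive argument in part (ii): extracting from $\Psi$ a genuine \v{C}ech cocycle representative of the relevant $H^1(Y, N_{Y/X}^{-m})$ class, and pairing it with the Ueda cocycle $\{f_{jk}^{(n+1)}\}$, requires careful bookkeeping of error terms in the change of charts, since the ${\rm U}(1)$-structure is only preserved to order $n$. The crucial analytic input is a refined subharmonic mean-value inequality applied to each Fourier coefficient, together with the fact that the vanishing of lower Ueda classes produces explicit coboundaries trivializing lower-order Fourier data and thereby legitimizing the iteration. By contrast, part (i) is essentially construction-and-estimate; the only subtle point there is achieving strict plurisubharmonicity in the direction transverse to $w_j$ along $Y$, which I would handle by an additive perturbation respecting the $a>n$ growth budget.
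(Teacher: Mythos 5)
Neither part of your sketch reaches the statement; note also that the paper itself does not prove this result but quotes it from Ueda \cite{U}, where the argument is of a different and essentially global nature. For (i), the partition-of-unity gluing does not preserve plurisubharmonicity. Near a point of $Y$ write $\Phi_0=|w_k|^{-a}+\sum_j\rho_j\bigl(|w_j|^{-a}-|w_k|^{-a}\bigr)$: the only positive contribution to the Levi form, $\partial\bar\partial|w_k|^{-a}$, has rank one and vanishes identically in the direction along $Y$, while the gluing differences (of size $|w_k|^{\,n-a}$, as are their $z$-derivatives) and the cut-off cross terms feed into the $z_k\bar z_k$-entry quantities of order ${\rm dist}(p,Y)^{\,n-a}$ with no sign control. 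Since $a>n$ these errors diverge and there is nothing in that entry to dominate them; a fixed ambient K\"ahler form adds only bounded positivity, and a second sum of the same kind is again degenerate along $Y$, so neither proposed repair works. More fundamentally, your construction uses only the existence of a type-$n$ expansion and never the hypothesis $u_n(Y,X)\neq 0$, which is part of ``type $n$''. But then the same argument would apply to $X=Y\times\Delta$ with $w_j\equiv w$, where no strictly plurisubharmonic function exists on any $V\setminus Y$ at all: such a $V\setminus Y$ contains the compact curves $Y\times\{c\}$, and a strictly plurisubharmonic function restricted to a compact curve contradicts the maximum principle. So any correct proof of (i) must use $u_n\neq0$ essentially, and your outline has no mechanism for it.

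For (ii), two steps fail as described. First, the angular Fourier coefficients of a merely plurisubharmonic $\Psi$ are not holomorphic in $z_j$, so they do not define \v{C}ech cochains with values in $N_{Y/X}^{-m}$, and the coordinate changes respect the circle action only modulo $O(|w_j|^{n+1})$, so the modes mix; the intended pairing with the Ueda cocycle $\{f^{(n+1)}_{jk}\}$ is therefore not even set up. Second, and decisively, the endgame is wrong: even granting exact rotation invariance, a radial subharmonic function on a punctured disc is a convex function of $-\log r$, and the model $\Psi\sim-\log|w|$ is non-constant, satisfies $\Psi=o({\rm dist}(p,Y)^{-a})$ for every $a>0$, and is perfectly compatible with the disc-wise maximum principle. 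This logarithmic behaviour sits in the zeroth Fourier mode, which your induction never touches, yet excluding exactly it is the global heart of Ueda's Theorem 2; indeed in type $(\beta)$ such a non-constant pluriharmonic function genuinely exists near $Y$ (cf.\ Remark \ref{rmk:beta_ph}), so its exclusion cannot follow from local, chartwise considerations and must again invoke the finite-type hypothesis $u_n(Y,X)\neq0$ in a substantive way.
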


\begin{remark}\label{rmk:beta_ph}
In contrast, 
by definition, the curve $Y$ of type $(\beta)$ admits a holomorphic foliation defined on an open neighborhood of $Y$ 
and  the holonomy along the compact leaf $Y$ is $\textrm{U}(1)$-linear.  
As is explained above, there exists a neighborhood $V$ of $Y$ in $X$ such that $[Y]|_V$ is $\textrm{U}(1)$-flat in this case. Thus, by considering the function $-\log |f_Y|_h^2$, where $f_Y\in H^0(V, [Y])$ is the canonical section and $h$ is a flat metric on $[Y]|_V$, 
there is a pluriharmonic function defined on $V\backslash Y$ which diverges logarithmically toward $Y$. 
\end{remark}

\begin{remark}\label{rmk:u1_flatness_ueda_class}
The Ueda type cannot be specified without the unitarity condition for the linear part of the holonomy, 
even if the holonomy is linearizable. 
In \cite[remark 2.2]{CLPT}, 
they constructed examples in the case III or VIII in \S 3 without the unitarity condition, 
although they are of type $(\alpha)$. 
\end{remark}


\section{Effect of holonomies on the Ueda types}\label{section:examples}

\subsection{Identification of the Ueda type divided by cases in accordance with dynamical properties of the holonomies}
Let $X$ be a complex surface and $Y$ be an elliptic curve embedded in $X$. 
Assume that there exist a non-singular holomorphic foliation $\mathcal{F}$ with $Y$ 
as a compact leaf, defined on a neighborhood of $Y$ in $X$. 
Take a point $p\in Y$ and generators $\gamma_1$ and $\gamma_2$ of $\pi_1(Y, p)$. 
We emphasize that these are fixed in the discussion below. 
Consider the holonomies $f:=h_{\mathcal{F}}(\gamma_1)$ and 
$g:=h_{\mathcal{F}}(\gamma_2)$ of $\mathcal{F}$ along $\gamma_{1}$ and $\gamma_{2}$ respectively, with respect to a transversal $\tau$ at $p$. 

According to the observation in \cite[Remark 2.2]{CLPT}, it is natural to focus on the case where 
both $\lambda=f'(0)$ and $\mu=g'(0)$ are elements of ${\rm U}(1)$ (see Remark \ref{rmk:u1_flatness_ueda_class}). 
For the fixed generators $\gamma_{1}$ and $\gamma_{2}$, the situation can be divided into ten cases I, II, \dots, X below in accordance with 
the (non-) torsionness of $\lambda$ and $\mu$ and the (non-) linearizability of $f$ and $g$. 
As the determining problem of Ueda's classification types $(\alpha)$, $(\beta)$, and $(\gamma)$ is stable under the change of the models by taking finite \`etale coverings (see Remark \ref{rmk:finite_cover}), we may assume that $\lambda$ (resp. $\mu$) is equal to $1$ if $\lambda$ (resp. $\mu$) is a torsion element of $\mathrm{U}(1)$ in what follows. 

\begin{description}
\item[Case I] $\lambda=\mu=1$, and $f$ and $g$ are linearizable. 
\item[Case II] $\lambda=\mu=1$, $f$ is linearizable, and $g$ is non-linearizable
\item[Case III] $\lambda=1$, $\mu$ is non-torsion, and both $f$ and $g$ are linearizable 
\item[Case IV] $\lambda=1$, $\mu$ is non-torsion, $f$ is linearizable, and $g$ is non-linearizable
\item[Case V] $\lambda=\mu=1$, and both $f$ and $g$ are non-linearizable
\item[Case VI] $\lambda=1$, $\mu$ is non-torsion, $f$ is non-linearizable, and $g$ is linearizable
\item[Case VII] $\lambda=1$, $\mu$ is non-torsion, and both $f$ and $g$ are non-linearizable
\item[Case VIII] both $\lambda$ and $\mu$ are non-torsion, and both $f$ and $g$ are linearizable
\item[Case IX] both $\lambda$ and $\mu$ are non-torsion, $f$ is linearizable, and $g$ is non-linearizable
\item[Case X] both $\lambda$ and $\mu$ are non-torsion and both $f$ and $g$ are non-linearizable
\end{description}

\begin{table}[htb]
\scalebox{0.8}{
  \begin{tabular}{|c|c||c|c|c|c|} \hline
    \multicolumn{2}{|c||}{} & \multicolumn{2}{c|}{$\mu\in \textrm{U}(1)$: torsion} & \multicolumn{2}{c|}{$\mu\in \textrm{U}(1)$: non-torsion}  \\ \cline{3-6}
     \multicolumn{2}{|c||}{}  & $g$: linearizable & $g$: non-linearizable &$g$: linearizable  &$g$: non-linearizable  \\ \hline\hline 
   $\lambda\in \textrm{U}(1)$  &$f$: linearizable  & I & II & III & IV \\ \cline{2-6}
   \ \ : torsion  & $f$: non-linearizable  &  & V & VI & VII \\ \hline
   $\lambda\in \textrm{U}(1)$ & $f$: linearizable &  &  & VIII & IX \\ \cline{2-6}
   \ \ : non-torsion &$f$: non-linearizable  &  &  &  & X \\ \hline
  \end{tabular}
}
\end{table}

\begin{remark}
For example in Case I, III and VIII, just the existence of the linearization maps $\varphi, \psi\in {\rm Diff}(\mathbb{C}, 0)$ such that $\varphi^{-1}$ $\circ$ $f\circ \varphi(w)=\lambda\cdot w$ and $\psi^{-1}$ $\circ$ $g$ $\circ$ $\psi(w)$ $=$ $\mu\cdot w$ is assumed and nothing on the relationship between $\varphi$ and $\psi$ is assumed literally. 
However in reality, it turns out that $f$ and $g$ can be linearized simultaneously in these cases (i.e. we have that $\varphi=\psi$, see \S 2). 
\end{remark}

It suffices to consider only these ten cases from the symmetry. 
Each case is invariant under conjugations, 
in particular, it does not depend on the choice of transversals. 
On the other hand, it {\it does} depend on the choice of generators of $\pi_{1}(Y,p)$. 
\begin{theorem}\label{thm:main_oldtype}
For the quadruple $(Y, X; \mathcal{F}, \{\gamma_1, \gamma_2\})$, the following statements hold. 
\begin{enumerate}
\item[$(i)$] In Case I, the pair $(Y, X)$ is of type ($\beta$). 
\item[$(ii)$] In Case II, the pair $(Y, X)$ is of type ($\alpha$).  
\item[$(iii)$] In Case III, the pair $(Y, X)$ is of type ($\beta$). 
\item[$(iv)$] In Case IV, the pair $(Y, X)$ is of type ($\gamma$).  
\item[$(v)$] In Case V, the pair $(Y, X)$ is of type ($\alpha$) or ($\beta$). 
Both pairs of types ($\alpha$) and ($\beta$) exist.   
\item[$(vi)$] No pairs $(Y, X)$ is in Case VI.  
\item[$(vii)$] No pairs $(Y, X)$ is in Case VII.  
\item[$(viii)$] In Case VIII, the pair $(Y, X)$ is of type ($\beta$).  
\item[$(ix)$] No pairs $(Y, X)$ is in Case IX. 
\item[$(x)$] In Case X, the pair $(Y, X)$ is of type ($\gamma$). 
\end{enumerate}
\end{theorem}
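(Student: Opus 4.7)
The plan is to partition the ten cases into four groups. First, the empty cases \textbf{VI}, \textbf{VII}, and \textbf{IX} follow at once from Proposition~\ref{prop:comm}. In Case IX the hypothesis (non-torsion $\lambda$, linearizable $f$) forces $g$ to be linearizable by part (1) applied with the roles of $f$ and $g$ swapped, contradicting our assumption. In Case VI the pair $(f,g)$ directly contradicts part (1), since $\mu$ is non-torsion and $g$ is linearizable. In Case VII, part (2) yields $\lambda = e^{2\pi\sqrt{-1}\theta}$ for some $\theta\in\mathbb{R}$; but $\lambda=1$ gives $\theta=0\in\mathbb{Q}$, so part (2)(a) then forces $f$ to be linearizable, a contradiction.

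For the linearizable cases \textbf{I}, \textbf{III}, and \textbf{VIII}, Case I is immediate: a linearizable germ with multiplier $1$ is conjugate to, hence equal to, the identity, so $f = g = \mathrm{id}$ and $\mathcal{F}$ has trivial (\emph{a fortiori} $\mathrm{U}(1)$-linear) holonomy. In Cases III and VIII, the non-torsion hypothesis on $\mu$ allows Proposition~\ref{prop:comm}(1) to produce a common linearization coordinate on a transversal in which $f(w) = \lambda w$ and $g(w) = \mu w$ with $\lambda, \mu \in \mathrm{U}(1)$. Thus $\mathcal{F}$ itself has $\mathrm{U}(1)$-linear holonomy in all three cases, so $(Y,X)$ is of type $(\beta)$ by definition. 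Cases \textbf{IV} and \textbf{X} are immediate applications of Theorem~\ref{thm_maincor}, since in both at least one of $f, g$ has a non-torsion multiplier in $\mathrm{U}(1)$ and is non-linearizable, i.e.\ is irrationally indifferent and non-linearizable, forcing type $(\gamma)$.

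The parabolic cases \textbf{II} and \textbf{V} require more work. Since both multipliers are $1$, the normal bundle $N_{Y/X}$ is trivial, and Theorem~\ref{thm:uedathm3} rules out type $(\gamma)$ as soon as the pair is of infinite type; so the task is to show that non-linearizability of a parabolic holonomy yields finite type, and to exhibit type $(\beta)$ examples in Case V. In Case II, $f = \mathrm{id}$ while $g(w) = w + a w^{n+1} + O(w^{n+2})$ is a non-trivial parabolic germ for some $n \geq 1$ and $a \neq 0$. My approach is to exploit a Fatou petal of $g$ inside a transversal to build an adapted system of defining functions of Ueda type $n$, compute the $n$-th Ueda class directly in terms of the parabolic invariant $a$, and verify that it is non-zero; this yields type $(\alpha)$. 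Case V is treated similarly for the $(\alpha)$ subcase, using the same Fatou-coordinate computation applied to a generator of lowest-order tangency. For the $(\beta)$ subcase I would construct an explicit example by equipping a flat (hence type $(\beta)$) neighborhood of $Y$ with a second foliation $\mathcal{F}$ whose holonomies are commuting non-trivial parabolic germs; this is possible because Ueda's trichotomy is an invariant of the pair $(Y,X)$ and not of the chosen foliation $\mathcal{F}$.

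The main obstacle I anticipate is Case V, where types $(\alpha)$ and $(\beta)$ cannot be separated by the holonomy of $\mathcal{F}$ alone: the dynamics of the commuting parabolic pair $(f,g)$ is blind to the auxiliary $\mathrm{U}(1)$-linear foliation $\mathcal{G}$ required for type $(\beta)$, so only the disjunction can be asserted. Producing a genuine type $(\beta)$ example while keeping $\mathcal{F}$ parabolic and non-linearizable is the delicate constructive ingredient, as one must verify by an independent argument that the ambient pair admits some linear foliation $\mathcal{G}$ distinct from the chosen $\mathcal{F}$.
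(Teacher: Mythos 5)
Your handling of eight of the ten cases follows essentially the same route as the paper: the empty cases VI, VII, IX via Proposition \ref{prop:comm}, the cases I, III, VIII via (simultaneous) linearization producing $\mathrm{U}(1)$-linear holonomy, cases IV and X as direct applications of Theorem \ref{thm_maincor}, and the exclusion of type $(\gamma)$ in Case V via Theorem \ref{thm:uedathm3}. Your Case II plan is also the paper's computation, except that the Fatou petal is an unnecessary detour: with $f=\mathrm{id}$ and $g(w)=w+b_nw^n+\cdots$ the foliation charts themselves already give a type $(n-1)$ system, and $u_{n-1}(Y,X)$ is the class of the representation $\rho(\gamma_1)=0$, $\rho(\gamma_2)=b_n$, which is non-zero in $H^{0,1}(Y)\cong H^1(Y,\mathcal{O}_Y)$ because $(0,b_n)$ is not proportional to the period vector $(1,\tau)$ spanning $H^{1,0}(Y)$; no Fatou coordinates enter.

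The genuine gap is the second half of assertion $(v)$: you must exhibit an actual Case V pair of type $(\beta)$, and your proposal leaves exactly this as an unproven ``delicate constructive ingredient.'' The observation that Ueda's trichotomy depends only on $(Y,X)$ and not on $\mathcal{F}$ does not produce a foliation with commuting non-trivial parabolic holonomies on a flat neighborhood; existence has to be shown. The paper closes this with an explicit construction: take the affine bundle $V=\mathbb{C}^2/\langle(1,1),(\tau,\tau)\rangle$ over $Y$, compactify to a ruled surface $X$ by adding the infinity section $Y$, and let $\mathcal{F}$ be induced by $\{\xi=\mathrm{const}\}$; in the coordinate $w=1/\xi$ the holonomies are $f(w)=w/(1-w)$ and $g(w)=w/(1-\tau w)$, so the quadruple is in Case V, while the change of variables $\widehat{\xi}=\xi-x$ identifies $X$ with $Y\times\mathbb{C}P^1$ and $Y$ with $Y\times\{\infty\}$, whence type $(\beta)$. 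Relatedly, your phrase ``non-linearizability of a parabolic holonomy yields finite type'' is correct in Case II but false as a principle in Case V: in the example just described the leading-coefficient class is $(1,\tau)$, which vanishes in $H^{0,1}(Y)$, so the lowest-order Ueda-class computation can only certify particular type-$(\alpha)$ examples (e.g. the suspension with $f=g=w/(1-w)$, whose class $(1,1)$ is not proportional to $(1,\tau)$, or the example of \cite{CLPT} cited in the paper); it cannot decide Case V in general, as your final paragraph in fact concedes.
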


\subsection{Proof of Theorem \ref{thm:main_oldtype} except $(iv)$ and $(x)$}

Here we prove Theorem \ref{thm:main_oldtype} except the assertions $(iv)$ and $(x)$. 
The proof of $(iv)$ and $(x)$ are one of the main parts in the present paper. 
We describe the proof of them in \S 4. 
By using the generators $\{\gamma_1, \gamma_2\}$ of $\pi_1(Y, p)$, 
here we identify the elliptic curve $Y$ with the quotient $\mathbb{C}/\langle 1, \tau\rangle$ for a modulus $\tau\in \mathbb{H}:=\{z\in \mathbb{C}\mid {\rm Im}\,z>0\}$. 
In \S 3.2 and \S 3.3, we fix the generators of $\pi_{1}(Y,p)$  corresponding to the curves $c(t):=z+t$ and $c'(t):=z+\tau t$, and keep the notations $\gamma_{1}$ and $\gamma_{2}$ for them. 

First, we show the assertion $(i)$. 
Assume that $(Y, X; \mathcal{F}, \{\gamma_1, \gamma_2\})$ is in Case I. 
In this case, both $f$ and $g$ are the identity. 
By considering the foliation chart corresponding to this, we obtain a system $\{w_j\}$ of local (or even global)  defining functions of $Y$. 
Thus the pair $(Y, X)$ is of type $(\beta)$. 

Next, we show the assertion $(ii)$. Assume that $(Y, X; \mathcal{F}, \{\gamma_1, \gamma_2\})$ is in Case II. 
In this case, $f$ is the identity. 
Note that $N_{Y/X}$ is holomorphically trivial in this case. 
Let 
\[
g(w)=w+\sum_{\nu=2}^\infty b_\nu\cdot w^\nu
\]
be the expansion of $g$. 
Denote by $n$ the minimum element of the set $\{\nu\in\mathbb{Z}\mid \nu\geq 2,\ b_\nu\not=0\}$. 
Then, the foliation chart of $\mathcal{F}$ gives a system $\{w_j\}$ of local defining functions of type $n-1$, 
which means that the pair $(Y, X)$ is of type greater than or equal to $n-1$. 
By definition, the $(n-1)$-th Ueda class corresponds to (the conjugacy class of) the representation 
\[
\rho\colon \pi_1(Y, p)\cong\mathbb{Z}\oplus\mathbb{Z}\tau\to \mathbb{C}
\]
 defined by $\rho(1)=0$ 
and $\rho(\tau)=b_n$ under the natural identification $H^1(Y, N_{Y/X}^{-n+1})=H^1(Y, \mathcal{O}_Y)=H^{0, 1}(Y, \mathbb{C})$ 
and the injection $H^{0, 1}(Y, \mathbb{C})\to H^1(Y, \mathbb{C})$. 
Thus, we have that $u_{n-1}(Y, X)\not=0$, which means that the pair $(Y, X)$ is of type $n-1$. 
Therefore, the pair $(Y,X)$ is of type $(\alpha)$. 

The assertion $(iii)$ is shown by the same argument as the proof of $(i)$ above. 
The proof of $(v)$ is given in \S3.3. 
The assertions $(vi), (vii), (viii)$, and $(ix)$ follow from Proposition \ref{prop:comm}. 
\qed


\subsection{Examples}

Here we give some examples. 

\subsubsection{Examples of the case V}
We describe examples of the case V to show the assertion $(v)$ of Theorem \ref{thm:main_oldtype}. 
As $N_{Y/X}$ is torsion in this case, we can apply Theorem \ref{thm:uedathm3} to conclude that the type of the pair is whether ($\alpha$) or ($\beta$). 
By considering the example of \cite[Remark 2.2]{CLPT} with general choice of the representation $c_\bullet$, we obtain an example of the pair of type ($\alpha$) in Case V. 
In what follows, we construct a pair of type ($\beta$) in Case V. 
Define an affine bundle $V\to Y$ over $Y=\mathbb{C}/\langle 1, \tau\rangle$ by 
\[
V=\mathbb{C}^2/\left\langle
\left(
    \begin{array}{c}
      1 \\
      1 
    \end{array}
  \right), 
\left(
    \begin{array}{c}
      \tau \\
      \tau 
    \end{array}
  \right)\right\rangle, 
\]
or equivalently, $V$ is the quotient $\mathbb{C}^2/\sim$ of $\mathbb{C}^2$ with coordinates $(x, \xi)$ by the relation generated by $(x, \xi)\sim (x+1, \xi+1)\sim (x+\tau, \xi+\tau)$ (see also \S \ref{section:example_susp_constr}). 
The projection to $Y$ is the one induced by the first projection $(x, \xi)\mapsto x$. 
Let $X$ be the ruled surface over $Y$ which is a compactification of $V$ by adding the infinity section. 
We will identify the infinity section with $Y$ by the natural manner and also denote it by $Y$, i.e. $X=V\cup Y$. 
Denote by $\mathcal{F}$ the foliation on $X$ whose leaves are locally defined by the equation $\xi=(\text{constant})$. 
By regarding $w=1/\xi$ as a local defining function of $Y$, the holonomies $f=h_{\mathcal{F}}(\gamma_1)$ and $g=h_{\mathcal{F}}(\gamma_2)$ can be expressed as
\[
f(w)=\frac{1}{\frac{1}{w}-1}=\frac{w}{1-w}
\]
and 
\[
g(w)=\frac{1}{\frac{1}{w}-\tau}=\frac{w}{1-\tau w}.
\]
It follows from the direct calculation that this example $(Y, X; \mathcal{F}, \{\gamma_1, \gamma_2\})$ is in Case V. 
On the other hand, 
by considering another coordinate $(\widehat{x}, \widehat{\xi})$ of $\mathbb{C}^2$ defined by $\widehat{x}=x$ and $\widehat{\xi}=\xi-x$, 
we can easily see that $X$ is biholomorphic to $Y\times \mathbb{C}P^1$ 
and that $Y$ corresponds to the subvariety $Y\times \{\infty\}$ of $Y\times \mathbb{C}P^1$. 
Thus, we conclude that the pair is of type ($\beta$). 

\subsubsection{Suspension construction}\label{section:example_susp_constr}
We consider the suspension construction over $Y=\mathbb{C}_{x}/\langle 1, \tau\rangle$. 
Let $f,g$ be elements in $\textrm{Diff}(\mathbb{C},0)$ which satisfies $f\circ g=g\circ f$ and $U$ an admissible domain for both of $f$ and $g$.  
Then, we take the quotient space of $\mathbb{C}_{x}\times \mathbb{C}_{\xi}$ by the equivalence relation generated by $(x,\xi)\sim (x+1, f^{-1}(\xi))\sim (x+\tau, g^{-1}(\xi))$ for any $x\in \mathbb{C}_{x}$ and $\xi\in U\subset \mathbb{C}_{\xi}$. 
Denote by $\pi$ the projection to $Y$ induced by the first projection $(x,\xi)\mapsto x$. 
Then, we can choose a smooth tubular neighborhood $X_{0}$ of $Y$ with respect to $\pi$. 
This equips the holomorphic foliation $\mathcal{F}_{0}$ induced by $\{\xi=\text{constant}\}$ which has $Y$ as a compact leaf. 
The space $X_{0}$ is not necessarily invariant by the leaves. 
The holonomies of $\mathcal{F}_{0}$ along $\gamma_{1}$ (resp. $\gamma_{2}$) is  given by $f$ (resp. $g$) with respect to the transversal $\pi^{-1}(\{0\})$. 
This model automatically satisfies the condition (1) which appeared before Theorem \ref{thm:main}. 

As $C^{\infty}$ smooth foliations, the isomorphism classes of foliation germs along the compact leaf $Y$ is  determined by the conjugacy classes of holonomy homomorphisms (see e.g. \cite[Theorem 2.3.9]{CC}). 
Or equivalently, a foliation germ along the compact leaf $Y$ is isomorphic to that of $(X_{0},Y;\mathcal{F}_{0})$ whose holonomy is conjugate to the former one. 
However, such a statement does not hold in the holomorphic setting. 
In \cite{K}, the first author investigated examples of $(X, Y; \mathcal{F})$ which satisfy the conditions $(1)$ and $(2)$ which appeared in Theorem \ref{thm:main}.
Under these conditions, 
the isomorphism classes of \textit{holomorphic} foliation germs along $Y$ can be determined by information of the holonomies (see \cite[\S 2.2]{K} for example). 

At the end of this section, we describe some examples obtained by using the suspension construction. 
We restrict ourselves to the case that $f$ is the identity map (the condition (2)). 
First, if $g$ is linearizable, then $(X_{0}, Y; \mathcal{F}_{0}, \{\gamma_1, \gamma_2\})$ belongs to Case I or III. 
Next, we provide two examples of the case where $g$ is non-linearizable. 
If $g$ is defined as
\[
g(w)=\frac{w}{1-w},
\]
$(X_{0}, Y; \mathcal{F}_{0}, \{\gamma_1, \gamma_2\})$ gives a typical example of Case II, which is known as \textit{Serre's example}.
See e.g. \cite[Example 4.1]{K} for the detail. 
In \cite[\S 5.4]{U}, Ueda constructed the first example of type $(\gamma)$. 
This can be explained as the case that $g$ is a (monic) polynomial of degree $d\geq 2$ such that $\mu$ is a non-torsion element of ${\rm U}(1)$ which satisfies ``the strong Cremer condition" $\liminf_{n\to\infty} A^n\cdot |1-\mu^n|^{1/(d^n-1)}=0$. 
Ueda showed that, for each neighborhood $\Omega$ of the origin, there exists a periodic cycle 
\[
\{w, g(w), g^2(w), \dots, g^{\ell-1}(w), g^\ell(w)=w\}
\]
which is included in $\Omega$ for such a polynomial $g$ ({\it the existence of small cycles}). 
As a fixed point with Siegel disk never have such a property, it is clear that such $g$ is non-linearizable. 
Therefore, it follows that Ueda's example belongs to Case IV. 


\section{Proof of main theorems} 

\subsection{Proof of Theorem \ref{thm_ph}} 
Let $c$ be a closed curve in $Y$ as in Theorem \ref{thm_maincor}. 
Take a transversal $\tau$ of $\mathcal{F}$ with a fixed parametrization and set $f:=h_{\mathcal{F}}(\gamma_{1}) \in \textrm{Diff}(\mathbb{C},0)$ where $\gamma_{1}=\langle c\rangle \in \pi_{1}(Y,p)$. 
By the assumption, the holonomy $f$ is irrationally indifferent and non-linearizable. 
As we mentioned in \S1, these properties do not depend on the choice of $\tau$. 
Denote by $g:=h_{\mathcal{F}}(\gamma_2)$ where $\pi_1(Y, p) \cong\mathbb{Z}\gamma_1  \oplus \mathbb{Z} \gamma_2$.
Since $f$ and $g$ commute, it follows from Proposition \ref{prop:comm} (2) (applying by switching the roles of $f$ and $g$) that the holonomy $g$ is either 
\begin{enumerate}
\item[(i)] linearizable and rationally indifferent, or 
\item[(ii)] non-linearizable and irrationally indifferent. 
\end{enumerate}

Here we remark that, in the case (i), by changing the coordinate on $\tau$ and taking a finite covering space of $X$, one may read the following proof by assuming that $g$ is the identity. 

In both cases, there exists a complete invariant set $K$ (hedgehog) in $\tau$ under the action of the subgroup $\Gamma$ of $\textrm{Diff}(\tau,p)\cong \textrm{Diff}(\mathbb{C},0)$ generated by $f$ and $g$. 
The existence is guaranteed by Theorem \ref{thm:hedgehog}. 
As replacing a smaller transversal $\tau$, we can get an arbitrary small invariant set $K$ in $\tau$. 
We may assume that the saturated set 
\[
\textstyle \mathcal{F}(K):=\bigcup_{x\in K} L_{x}
\]
of $K$ is included in a relatively compact subset $W$ of $V$, where $L_{x}$ is the leaf of $\mathcal{F}$ through $x$. 
The set $\mathcal{F}(K)$ includes $Y$. 
By Theorem \ref{thm:hedgehog} (iii) and (iv), there is a leaf $L_{x_{0}}$ such that 
\[
\overline{L_{x_{0}}}=\mathcal{F}(\partial K)\supset Y, 
\]
where $\textstyle \mathcal{F}(\partial K):=\textstyle\bigcup_{x\in \partial K} L_{x}$. 

For sufficiently small $K$, the conformal type of leaves in $\mathcal{F}(K)$ is determined by the following lemma, 
which was taught by Professor Tetsuo Ueda. 

\begin{lemma}\label{lem_parabolic}
For sufficiently small $K$, any leaf $L$ of $\mathcal{F}(K)$ is parabolic, i.e. the universal covering space $\widetilde{L}$ is biholomorphic to $\mathbb{C}$. 
\end{lemma}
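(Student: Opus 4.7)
The plan is to construct, for each $x \in K$, a holomorphic covering map $D_x\colon \mathbb{C} \to L_x$ from the universal cover $\widetilde{Y} \cong \mathbb{C}$ of the elliptic curve $Y$ onto the leaf $L_x$. Once this is done, simple connectedness of $\mathbb{C}$ will identify $D_x$ with the universal covering map of $L_x$, so $\widetilde{L_x} \cong \mathbb{C}$, exhibiting $L_x$ as parabolic.

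First I would use the smallness of $K$: by choosing $K$ sufficiently small, we may assume $\mathcal{F}(K)$ is relatively compact in $V$, so every leaf $L_x$ with $x \in K$ is entirely contained in the domain $V$ of $\mathcal{F}$. Fix a universal covering $\pi\colon \mathbb{C} \to Y$ with $\pi(0)=p$. For $\tilde z \in \mathbb{C}$, pick any path $\tilde\gamma\colon [0,1] \to \mathbb{C}$ from $0$ to $\tilde z$ and set $\gamma := \pi\circ\tilde\gamma$. By covering $\gamma([0,1])$ with finitely many foliated charts and lifting chart by chart (matching the local $z_j$-coordinates of the lift with those of $\gamma$), one obtains a unique path $\bar\gamma\colon [0,1] \to L_x$ with $\bar\gamma(0)=x$. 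The lift is defined on the full interval because $\bar\gamma$ stays inside $L_x \subset \mathcal{F}(K) \subset V$; and by the homotopy-lifting property combined with simple connectedness of $\mathbb{C}$, its endpoint depends only on $\tilde z$. Define $D_x(\tilde z) := \bar\gamma(1)$.

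In each foliated chart $D_x$ is essentially a translation in the $z$-coordinate, so it is holomorphic and a local biholomorphism. Surjectivity follows by reversing the construction: any $y \in L_x$ can be joined to $x$ by a path in $L_x$, whose piecewise projection to $Y$ via the foliated charts lifts to a path in $\mathbb{C}$ ending at some $\tilde z$ with $D_x(\tilde z)=y$. For the covering property at $y \in L_x$, take the plaque $P$ of $L_x$ through $y$ in a foliated chart; since $P$ is a disk, uniqueness of path-lifting decomposes $D_x^{-1}(P)$ as a disjoint union of open sets, each mapped biholomorphically onto $P$. Hence $D_x$ is a covering map, and simple connectedness of $\mathbb{C}$ identifies it with the universal cover of $L_x$. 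It follows that $\widetilde{L_x} \cong \mathbb{C}$ and $L_x$ is parabolic. The main obstacle is making the global path-lifting statement rigorous and ensuring that the lift $\bar\gamma$ never escapes $V$; this reduces to the inclusion $\bar\gamma \subset L_x \subset \mathcal{F}(K) \subset V$, which is precisely where the smallness assumption on $K$ is essential.
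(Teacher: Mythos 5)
Your route is genuinely different from the paper's. You build, by holonomy transport along paths, a holomorphic local biholomorphism $D_x\colon \mathbb{C}\to L_x$ and argue that it is a covering map, which identifies $\widetilde{L_x}$ with $\mathbb{C}$ directly. The paper instead fixes a smooth retraction $\phi\colon W\to Y$ whose leafwise derivatives satisfy $\phi_z\to 1$ and $\phi_{\overline{z}}\to 0$ near $Y$, deduces a uniform bound on the leafwise dilatation over the relatively compact set $\mathcal{F}(K)$, lifts $\phi|_L$ to universal covers, and excludes $\widetilde{L}\cong\mathbb{D}$ because there is no quasiconformal homeomorphism from $\mathbb{D}$ onto $\mathbb{C}$. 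Your argument, if completed, is more elementary (no quasiconformal theory) and gives a little more: an explicit holomorphic covering of each leaf by $\mathbb{C}$ compatible with the projection to $Y$. The paper's argument is shorter because it only needs a dilatation estimate rather than covering-space properties of the leafwise projection.

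The one step you are too quick about is the claim that every lift is defined on the whole interval ``because $\bar\gamma$ stays inside $L_x\subset\mathcal{F}(K)\subset V$.'' The inclusion alone is not a proof: path lifting through a local homeomorphism can fail without the partial lift leaving any prescribed open set, so you must actually show that the set of parameters $t$ for which the lift exists on $[0,t]$ is closed. This does follow from the smallness of $K$, but via the standard continuation argument: arrange $\mathcal{F}(K)\Subset W$ with $\overline{W}$ compact and contained in the union of the finitely many product foliated charts; if the lift exists on $[0,t_0)$, take a limit point $q\in\overline{W}$ of $\bar\gamma(s_k)$ with $s_k\uparrow t_0$, pass to a product chart $U\times D$ at $q$, and use that the plaques $U\times\{c\}$ with $c$ close to the transverse coordinate of $q$ project onto a fixed neighborhood of $\gamma(t_0)$ in $Y$, together with uniqueness of lifts, to continue $\bar\gamma$ past $t_0$. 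The same completeness statement is what legitimizes your appeals to homotopy lifting (well-definedness of $D_x$) and to the evenly-covered decomposition of $D_x^{-1}(P)$, since both require all intermediate lifts to exist on full intervals. With that supplied, your proof is correct.
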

\begin{proof}[Proof of Lemma \ref{lem_parabolic}] 
By replacing $W$ with a smaller neighborhood of $Y$,
we can choose a smooth retraction $\phi \colon W\to Y$ whose restriction $\phi|_{L}\colon L\to Y$ on each leaf $L$ is an orientation preserving local diffeomorphism. 
If necessary, we here retake a smaller transversal $\tau$ to achieve that $\mathcal{F}(K)$ is included in $W$. 
Then, the dilatation 
\[
D_{\mathcal{F},\phi}(p):=\frac{\phi_{z}(p)+\phi_{\overline{z}}(p)}{\phi_{z}(p)-\phi_{\overline{z}}(p)} \in [1,\infty]
\]
of $\phi$ along leaves is defined, where $z$ is a complex coordinate on the leaf direction.  
(More precisely, we fix a holomorphic universal covering map $\mathbb{C}\to Y$ and consider the derivatives $\widetilde{\phi}_z$ and $\widetilde{\phi}_{\overline{z}}$ of a lift $\widetilde{\phi}\colon W\to \mathbb{C}$ of $\phi$.)  
Note that the derivatives satisfy $\phi_{z}(p)\to 1$ and $\phi_{\overline{z}}(p)\to 0$ as $p\to Y$. 
Since $Y$ is compact, the dilatation is bounded on $\mathcal{F}(K)$. 
Thus, the restriction $\phi|_{L}\colon L\to Y$ on a leaf $L\in \mathcal{F}(K)$ is a quasiconformal.
 
The universal covering space $\widetilde{L}$ is biholomorphic to either $\mathbb{D}$ or $\mathbb{C}$. 
The dilatation of a lift $\widetilde{\phi|_{L}}\colon \widetilde{L}\to \widetilde{Y}=\mathbb{C}$ is also bounded, so that it is a quasiconformal diffeomorphism. 
However, there are no quasiconformal homeomorphisms from $\mathbb{D}$ to $\mathbb{C}$ (see e.g. \cite{N}). 
Therefore, it follows that $\widetilde{L}$ is biholomorphic to $\mathbb{C}$. 
\end{proof} 
Let $\Phi\colon V\to \mathbb{R}\cup \{\infty\}$ be a continuous function which is pluriharmonic on $V\setminus Y$ and $i\colon L_{x_{0}}\to$ $\mathcal{F}(K)$ $\subset W \,(\Subset V)$ the inclusion, which is a holomorphic immersion. 
Then, the pullback $\varphi:=(\pi\circ i)^*\Phi$ of $\Phi$ is a harmonic function on $\widetilde{L_{x_{0}}}\cong \mathbb{C}$, where $\pi$ is the universal covering map $\pi\colon \widetilde{L_{x_{0}}}\to L_{x_{0}}$. 
Since $\varphi$ is bounded from below, Liouville's theorem shows that $\varphi$ is constant. 
Thus, $\Phi$ is constant on $\overline{L_{x_{0}}}=\mathcal{F}(\partial K)\supset Y$. 
By the continuity of $\Phi$ around $Y$, $\Phi$ is bounded from above on a neighborhood of $Y$. 
\qed

\subsection{Proof of Theorem \ref{thm_maincor}}
Let $Y$ be an elliptic curve embedded in a complex surface $X$ and $\mathcal{F}$ a holomorphic foliation as in Theorem \ref{thm_maincor}, defined on a neighborhood $V$ of $Y$. 
By the Serre duality, for $m\in \mathbb{Z}$, 
\[
H^{1}(Y,N_{Y/V}^{-m})\cong 
\left\{
\begin{array}{lll}
\mathbb{C} && (\textrm{if}~~N_{Y/V}^{-m}=\mathbf{1}_{Y})\\
0 && (\textrm{if}~~N_{Y/V}^{-m}\neq \mathbf{1}_{Y})
\end{array}
\right.,
\]
where we denote by $\mathbf{1}_{Y}$ the holomorphically trivial line bundle. 
It follows from the holonomy assumption and Proposition \ref{prop:comm} $(2)$ that $N_{Y/V}$ is non-torsion, i.e. $N_{Y/V}^{-m}\neq \mathbf{1}_{Y}$ for any $m>0$ 
(Consider a similar argument in the proof of the assertion $(ii)$ of Theorem \ref{thm:main_oldtype}. 
Here, we use Proposition \ref{prop:comm} $(2)$ for assuring that the representation $\pi_1(Y, p)\to \mathbb{C}$ which corresponds to the normal bundle is a unitary representation). 
Thus, $Y$ is either of type $(\beta)$ or of type $(\gamma)$. 

By assuming that $(Y,X)$ is of type $(\beta)$, we lead to contradiction. 
For an arbitrary small neighborhood of $Y$, say $V$ again, there exists a pluriharmonic function $\Phi\colon V\setminus Y\to \mathbb{R}$ such that $\Phi(p)\geq -c\log {\rm dist}(p, Y)$ as $p\to Y$ for some positive constant $c$ (see Remark \ref{rmk:beta_ph}). 
In particular, this is bounded from below on a compact neighborhood $V'\subset V$ of $Y$. 
Then, Theorem \ref{thm_ph} shows that $\Phi$ is bounded from above on $V'\supset Y$. 
This contradicts the growth condition of $\Phi$. 
\qed

\subsection{Proof of Theorem \ref{thm:main_oldtype}}
The assertions $(iv)$ and $(x)$ follows from Theorem \ref{thm_maincor}. 
See \S 3.2 for the proof of the others. 
\qed

\subsection{Proof of Theorem \ref{thm:main}}
As any $g$ can be classified into only one of the assertions $(i), (ii)$, or $(iii)$, ``only if" part follows from ``if" part. Therefore here we only show ``if"-part of each of the assertions. 

The case where the modulus of $\mu:=g'(0)$ is not equal to $1$, the assertion is shown in \cite{K} (Indeed, it is shown that the pair is of type $(\beta)$ in this case by using the assumption $(1)$). 
Therefore we may assume that $|\mu|=1$, and so the situation is as in \S 3, in what follows. 

First, assume that $\mu$ is a torsion element of $\mathrm{U}(1)$. 
If $g$ is of finite order, i.e. $g^n = \textrm{id}$ for some $n>0$, the situation is reduced to Case I in \S 3 by taking a finite covering space of a tubular neighborhood of $Y$. 
In this case, it follows from Theorem \ref{thm:main_oldtype} $(i)$ that the pair $(Y, X)$ is of type $(\beta)$. 
If $g$ is not of finite order, then it is non-linearizable at $0$.
Namely, it is in Case II. 
Thus, it follows from Theorem \ref{thm:main_oldtype} $(ii)$ (and the similar argument as above) that the pair is of type $(\alpha)$. 

Next, assume that $\mu$ is a non-torsion element of $\mathrm{U}(1)$. 
If $g$ is linearizable at $0$, the situation is in Case III. 
In this case, the pair is of type $(\beta)$ by Theorem \ref{thm:main_oldtype} $(iii)$. 
If $g$ is non-linearizable at $0$, the situation is in Case IV. 
It follows from Theorem \ref{thm:main_oldtype} $(iv)$ that the pair $(Y, X)$ is of type $(\gamma)$. 
\qed

\section{Proof of Theorem \ref{thm_psh}.}
Let $(Y, X; \mathcal{F})$ be as in Theorem \ref{thm_psh}. 
Consider a continuous function $\Phi\colon V \to \mathbb{R}\cup\{\infty\}$, defined on a neighborhood $V$ of $Y$, which is plurisubharmonic on $V\setminus Y$.  
Set
\[
\Omega:=\{p\in Y\mid \Phi(p)<\infty\}.
\] 
First, we show that $\Omega \neq \emptyset$ by assuming a contradiction. 
Second, we check that $\Omega=Y$ and $\Phi$ is extended as a plurisubharmonic function. 

We assume that $\Omega=\emptyset$ holds. 
Take a transversal $\tau$ of $\mathcal{F}$ at $p_{0}\in Y$. 
Let  $c$ be a closed curve as in Theorem \ref{thm_maincor} and $c'$ as in the assumption of Theorem \ref{thm_psh}. 
After taking a finite covering space if necessary, we may assume that $c$ and $c'$ generate the fundamental group $\pi_{1}(Y, p_{0})$. 
Note that the conclusion is not changed by this operation. 
Set $f:=h_{\mathcal{F}}(\langle c \rangle)$ and $g:=h_{\mathcal{F}}(\langle c' \rangle)$, 
where $f$ is irrationally indifferent and non-linearizable and $g$ is of finite order. 
By taking a finite covering space if necessary again, we may further assume that $g$ is the identity. 
As in the proof of Theorem \ref{thm_ph}, there exists a hedgehog $K$ of an admissible neighborhood $U$ in $\tau$ which is completely invariant under the action of $\Gamma =\langle f,g\rangle=\langle f\rangle$. 
Also as in the proof of Theorem \ref{thm_ph}, take a relatively compact neighborhood $W$ of $Y$ in $V$ such that the saturated set $\mathcal{F}(K)$ is included in $W$. 
By the assumption, the set
\[
V_{M}:=\{p\in V\mid \Phi(p)>M\},
\] 
is a neighborhood of $Y$ in $V$, where $M$ is a constant. 
By enlarging $M$, we may assume that $V_{M}\Subset W$ and $U_{M}:=V_{M}\cap \tau \Subset U$. 
Since $K\cap \partial U\neq \emptyset$ (see Theorem \ref{siegel}), $K\setminus  \overline{U_{M}} \neq \emptyset$. 

There exists a point $x_{0}$ of $\partial K$ such that the forward orbit 
\[
O^{+}_{f}(x_{0}) :=\{ f^{n}(x_{0}) \mid n\in \mathbb{N}\}
\] 
of $f$ is dense in $\partial K$. 
Indeed, it follows from Theorem \ref{thm:hedgehog} $(iii), (iv)$ (applying by switching the roles of $f$ and $g$) and the recurrence property of orbits in $K$ (see \cite[Corollaire 1]{P3}). 
By the holonomy condition, the leaf $L_{x_{0}}$ through $x_{0}$ is diffeomorphic to an open annulus. 

Choose a point $y_{0}\in K\cap \partial U$ (note $y_{0}\notin\overline{U_M}$) and a sufficiently small neighborhood $D$ of $y_{0}$ in $\tau$ such that $\widehat{D}$ is included in $\textrm{Int} (W\setminus V_{M})$. 
Here $\widehat{D}$ is the set defined as follows: 
\[
\widehat{D}:=\overline{\{(z, w)\mid z\in c', w\in D\}}, 
\]  
where we used coordinates $(z, w)$ in a Stein neighborhood of $Y\setminus \{q_{0}\}$ in $X$ where the point $q_{0}\in Y$ does not lie on the closed smooth curve $c'$, $z$ is a local coordinate on $Y\setminus \{q_0\}$, and each leaf of $\mathcal{F}$ is defined by $\{w={\rm constant}\}$ there (Or one may replace $Y\setminus\{q_0\}$ with any open neighborhood of $c'$ in $Y$). 
The existence of such a neighborhood is guaranteed by Siu's theorem \cite{Siu} (Note that it is shown in \cite{Siu} that any Stein submanifold admits a neighborhood which is biholomorphic to a neighborhood of the zero section of the normal bundle. As any holomorphic line bundle over an open Riemann surface is holomorphically trivial, $Y\setminus \{q_{0}\}$ admits a neighborhood which is biholomorphic to the product of $Y\setminus \{q_{0}\}$ and a disk). 
Since $0$ and $y_{0}$ is contained in $\overline{O^{+}_{f}(x_{0})}$, we can take positive integers $n_1<n_2<n_3$ such that 
\[
f^{n_1}(x_{0}), f^{n_3}(x_{0})\in D ~\textrm{and}~ f^{n_2}(x_{0})\in U_M.
\] 
We here take the lift of the closed curve $c$ to $L_{x_{0}}$ through $f^{n_1}(x_{0})$, denoted by $\widetilde{c}$. 
Three points $f^{n_1}(x_{0}), f^{n_2}(x_{0}), f^{n_3}(x_{0})$ lie on the path $\widetilde{c}$ in this order with respect to the natural orientation.  
Also, take the lift of the closed curve $c'$ to $L_{x_{0}}$ through $f^{n_j}(x_{0})$, $j=1,3$, which is expressed as
\[
\widetilde{c_{j}}'=\{(z, w)\mid z\in c', w=f^{n_j}(x_{0})\}. 
\]
It is included in $\widehat{D}$. 
Denote by $A$ the subset of $L_{x_{0}}$ bounded by $\widetilde{c_{1}}'$ and $\widetilde{c_{3}}'$. 
It follows from the above construction that $f^{n_2}(x_{0})$ lies on $\textrm{Int}\,A$. 
See figure \ref{fig:two} below. 

Let $i\colon L_{x_{0}}\to W$ be the inclusion, which is a holomorphic immersion. 
Then, the function $i^*\Phi$ is subharmonic and which satisfies $i^*\Phi|_{\partial A}<M$ by the construction. 
However, since $f^{n_2}(x_{0})\in V_M$, we obtain $\textstyle\sup_{A} i^{\ast}\Phi |_{A}>M$. This contradicts to the maximum principle. 

\begin{figure}[htpb]
 \begin{center}
  \hspace{-10mm}
  \includegraphics[width=85mm]{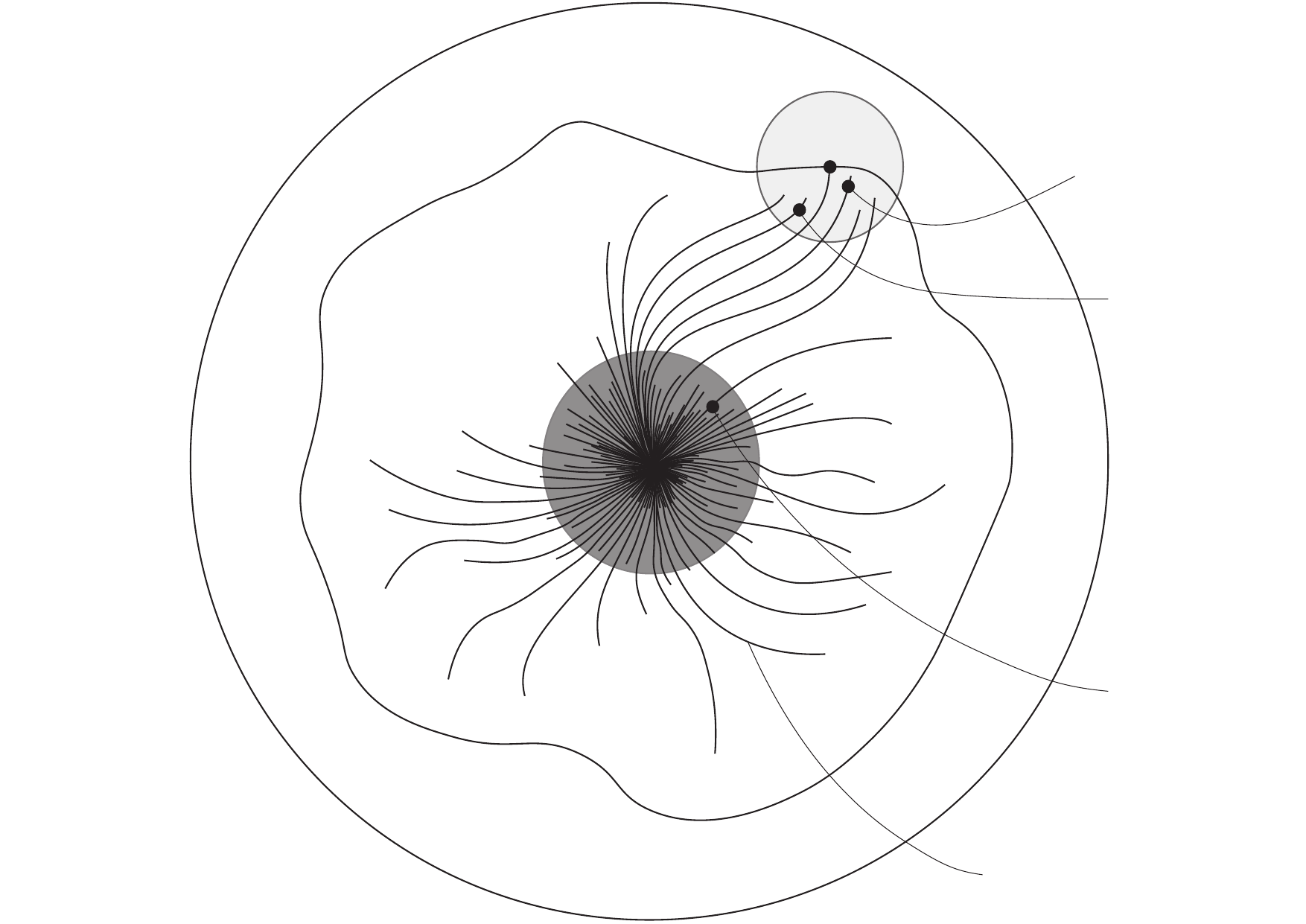}
  \hspace{-5mm}
  \includegraphics[width=85mm]{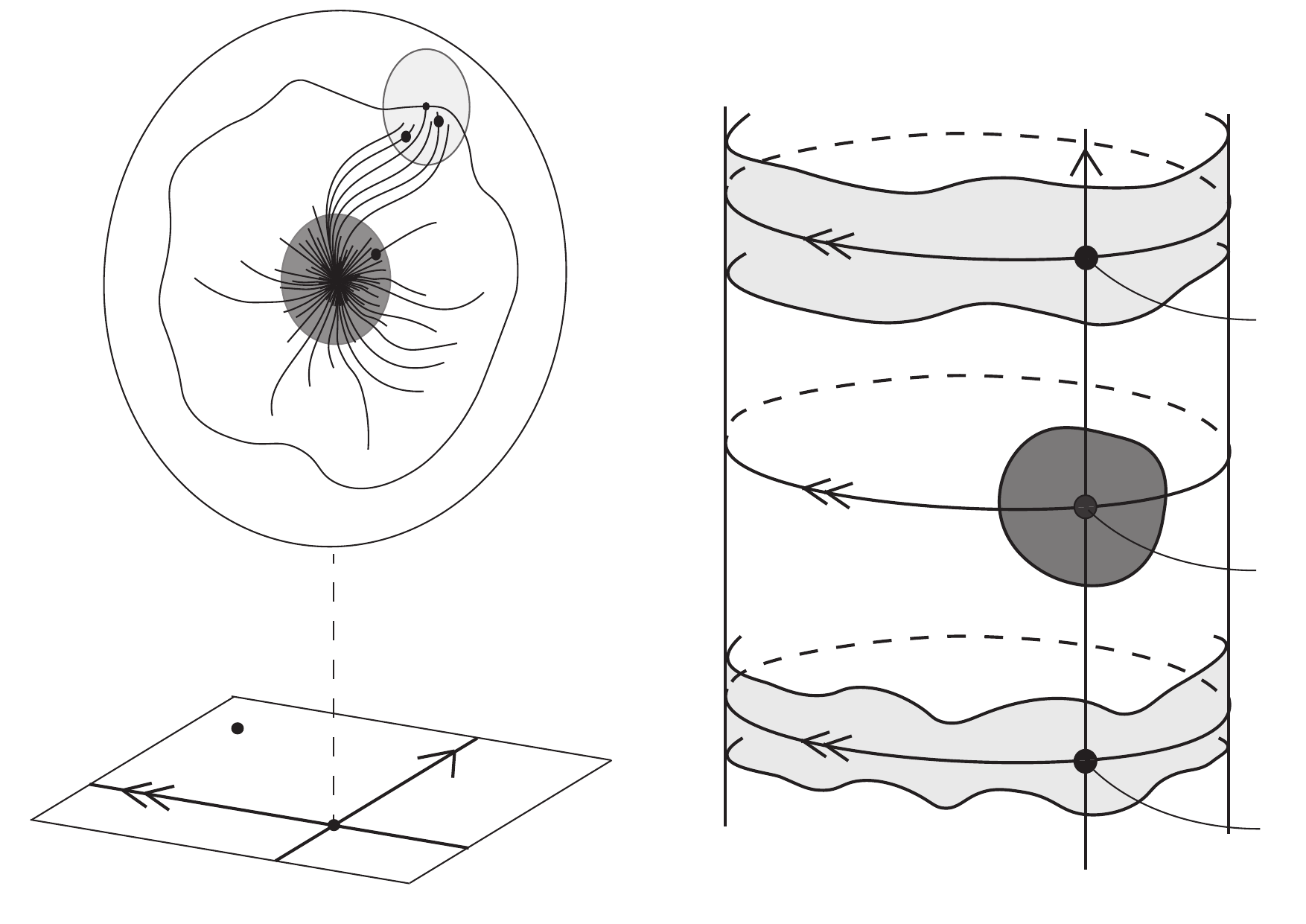}
 \end{center}
 \begin{picture}(0,0)
 \put(-114,170){\small$D$}
 \put(-95,164){\small$y_{0}$}
 \put(-48,155){\small$f^{n_{3}}(x_{0})$}
 \put(-42,133){\small$f^{n_{1}}(x_{0})$}
 \put(-43,58){\small$f^{n_{2}}(x_{0})$}
 \put(-202,132){\small$U$}
 \put(-15,25){\small$Y$}
  \put(-65,25){\small$K$}
 \put(-205,35){\small$\tau$}
 \put(-164,116){\small$U_{M}$}
 \put(47,30){\small$p_{0}$}
\put(105,58){\small$\widetilde{c_{1}}'$}
\put(105,105){\small$\widetilde{c_{2}}'$}
\put(105,150){\small$\widetilde{c_{3}}'$}
\put(30,23){\small$c$}
\put(77,29){\small$c'$}
 \put(28,48){\small$q_{0}$}
 \put(210,175){\small$L_{x_{0}}$}
 \put(218,127){\small$f^{n_{3}}(x_{0})$}
 \put(218,80){\small$f^{n_{2}}(x_{0})$}
 \put(218,32){\small$f^{n_{1}}(x_{0})$}
 \put(178,23){\small$\widetilde{c}$}
\end{picture}
\vspace{-5mm}
 \caption{The hedgehog $K$ on $\tau$ which is invariant under the holonomy along $Y$ is depicted in the left figure. There is a point $x_{0}\in \partial K=K$ whose forward orbit is dense in $\partial K$.  
 The right figure is the corresponding dense leaf $L_{x_{0}}$ in $\mathcal{F}$ which is an open annulus. 
 The function $\Phi$ takes values less than $M$ on an open neighborhood of $\widetilde{c_{1}}'\cup \widetilde{c_{3}}'$ (colored in light gray) and greater than $M$ on an open neighborhood of $f^{n_{2}}(x_{0})$ (colored in dark gray). 
 \label{fig:two}
 }
\end{figure}

Next, we show that $\Omega=Y$.
By \cite[Theorem 5.24]{D}, $\Phi$ is plurisubharmonic on a neighborhood of $p$ in $V$ for each $p\in \Omega$. 
Since $Y$ is connected and $\Omega$ is a non-vacuous open subset of $Y$, it suffices to show that $\Omega$ is closed. 
Take a point $q\in \overline{\Omega}$ 
and a coordinate $(z, w)$ around $q$ in $V$ so that $(z, w)=(0, 0)$ at $q$ and $Y=\{w=0\}$ on the locus. 
Consider a sequence $\{q_\nu=(z_\nu, 0)\}_\nu\subset \Omega$ which tends to $q$ as $\nu\to\infty$. 
Then, for a sufficiently small $\varepsilon>0$,
\[
\Phi(q)
=\Phi(0, 0)
=\lim_{\nu\to \infty}\Phi(z_\nu, 0)
\leq\lim_{\nu\to \infty}\frac{1}{2\pi}\int_{0}^{2\pi}\Phi(z_\nu, \varepsilon e^{\sqrt{-1}\theta})\,d\theta. 
\]
For a fixed $\epsilon>0$, the sequence $\{\textstyle \max_{0\leq \theta<2\pi}\Phi(z_\nu, \varepsilon e^{\sqrt{-1}\theta})\}_\nu$ is bounded from above, so that we have $q\in \Omega$, i.e. $\Omega$ is closed. Therefore, $\Omega=Y$ holds, and the proof is complete.
\qed


\section{Proof of Corollary \ref{cor:semipositivity}}

By \cite{K2}, $L$ is not semi-positive when the pair $(Y, X)$ is of type $(\alpha)$. 
Assume that the pair $(Y, X)$ is of type $(\beta)$. 
Then there exists a neighborhood $V$ of $Y$ such that $L$ admits a unitary flat metric $h_V$ on a neighborhood $V$ of $Y$ 
(i.e. $h_V$ is a $C^\infty$ Hermitian metric on $L|_V$ whose Chern curvature is $0$, see \S \ref{section:review_uedatheory}). 
On the other hand, $L$ admits a singular Hermitian metric $h_{\rm sing}$ such that $h_{\rm sing}|_{X\setminus Y}$ is a $C^\infty$ Hermitian metric on $L|_{X\setminus Y}$, $h_{\rm sing}\to\infty$ holds when a point approaches to $Y$, and that the Chern curvature of $h_{\rm sing}|_{X\setminus Y}$ is $0$. 
Indeed, the singular Hermitian metric defined by $|f_Y|_{h_{\rm sing}}^2\equiv 1$ satisfies this property, where $f_Y\in H^0(X, L)$ is a section with ${\rm div}(f_Y)=Y$. 
A $C^\infty$ Hermitian metric $h$ on $L$ with semi-positive curvature can be constructed by using {\it the regularized minimum construction} for these two metrics $h_V$ and $h_{\rm sing}$, which is the same construction as we used for proving \cite[Corollary 3.4]{K1}. 
This proves the semi-positivity of $L$ when the pair $(Y, X)$ is of type $(\beta)$. 

Therefore all we have to do is to show that $L$ is not semi-positive assuming that the triple $(Y, X,\mathcal{F})$ is in Case IV, which is done by the same manner as in the proof of the main theorem in \cite{K2} by using Theorem \ref{thm_psh} instead of \cite[Theorem 2]{U}. 
\qed

In the cases we described in \S 3, Case X is the most interesting case from the viewpoint of Conjecture \ref{conjecture:main}: 

\begin{question}
Does $L$ admit a $C^\infty$ Hermitian metric with semi-positive curvature when the pair $(Y, X)$ is in Case X? 
\end{question}

\appendix
\section{hedgehogs for commuting holomorphic diffeomorphisms}\label{section:simplified_prf_of_Thm.III.14}

In this appendix, we give a proof of Theorem \ref{Thm.III.14}, based on \cite{P5}. 
His proof requires some facts which are only written in the unpublished paper (mainly, the uniqueness result of hedgehogs {\cite[Thm.III.4]{P5}}). 
Theorem \ref{Thm.III.14} is a slightly weaker version of his original statement, which is however enough for our purpose. 
As an advantage, we can prove it while avoiding the use of such facts. 
More precisely, we prove it by showing Lemma \ref{lem_a} instead of {\cite[Thm.III.4]{P5}}. 
We here describe the outline of the proof. 
Proposition \ref{propII3} below is obtained in \cite{P3} and \cite{P5}, which is a key proposition. 
See the references and also \cite{Y} for the proof.  
Theorem \ref{pm12} is also shown, which is one of the main applications of Proposition \ref{propII3}. 
We give a sketch of the proof. 
Furthermore,  we prepare Lemma \ref{hdf} and Lemma \ref{lem_a}. 
Finally, Theorem \ref{Thm.III.14} will be proved by using Lemma \ref{lem_a} and Theorem \ref{pm12}. 
In this section, we consider $\mathbb{C}P^1$ with the Fubini-Study metric $g_{FS}$ 
as the ambient space.

\begin{proposition}[{\cite[Proposition 1]{P3}, \cite[Proposition II.3]{P5}}]\label{propII3} 
Let $g(z)=\mu z +O(z^2)$ be a local holomorphic diffeomorphism with the irrationally indifferent fixed point $0$. 
Assume that $g$ is non-linearizable at $0$. 
Let $U$ be an admissible domain of $g$ and 
$K$ a hedgehog of $(U,g)$.
Then, 
for each $n\in \mathbb{N}$, 
there exists a quintuple $(\Omega_{n}, B_{n}, \eta_{n}, R_{n}, A_{n})$ associated with $K$, 
where $\Omega_{n}$ is an open neighborhood of $K$ in $\mathbb{C}P^1$, 
$B_{n}$ is a closed annulus in $\Omega_{n} \setminus K$ separating $\partial \Omega_{n}$ from $K$, 
$\eta_{n}$ is a Jordan closed curve in the interior of $B_{n}$ separating two boundary components of $B_{n}$, 
$R_{n}$ is a closed quadrilateral in $B_{n}$,  and 
$A_{n}$ is a closed annulus in $\Omega_{n}\setminus (K\cup R_{n})$ separating $R_{n}$ from $K$
whose modulus tends to $\infty$ as $n\to \infty$. 
The quintuple satisfies the following conditions:
\begin{itemize} 
\item[(1)]
For each $q_{j}~(j=0,\dots,n)$, the iterations $g^{\pm q_{j}}$ are defined on $\Omega_{n}$, 
where $q_{n}$ is given by 
the continued fractional approximation $(p_{n}/q_{n})_{n\in \mathbb{N}}$ of the irrational number $\alpha=(\log{\mu})/2\pi\sqrt{-1}$. 
\item[(2)] 
For any point $z$ in $\eta_{n}$, 
there exists an iteration $g^{m_{n}}(z)$ which is contained in $R_{n}$.  
\item[(3)] 
For any point $z$ in the component of $\mathbb{C}P^1\setminus B_{n}$, 
if there is an integer $k$ such that 
$g^{k}(z)$ is contained in the other component of $\mathbb{C}P^1\setminus B_{n}$,  
then there exists an iteration $g^{k_{n}}(z)$ which is contained in $R_{n}$. 
\end{itemize}
\end{proposition}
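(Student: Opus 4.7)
The plan is to reduce the problem to the dynamics of an analytic circle diffeomorphism via uniformization, and then to exploit the arithmetic of the continued fraction expansion of $\alpha = (\log \mu)/2\pi\sqrt{-1}$ to build the quintuple.

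First, I would uniformize. Since $\mathbb{C}P^1 \setminus K$ is connected and simply connected (by the hedgehog properties in Theorem \ref{siegel}), there is a Riemann map $\psi\colon \mathbb{D} \to \mathbb{C}P^1\setminus K$ with $\psi(0) = \infty$. The germ $g$ is defined on a neighborhood of $K$, and via $\psi$ it transports to a holomorphic map on an annular neighborhood of $\partial \mathbb{D}$ inside $\mathbb{D}$. Standard prime-end theory, together with the fact that $g$ preserves $K$ setwise, shows that this conjugated map extends to a real-analytic diffeomorphism of $\partial \mathbb{D}$. Its rotation number can be identified with $\alpha$ (for instance by comparing linear parts at $0$ via a careful Green's-function argument). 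Thus I have reduced matters to studying the dynamics of a holomorphic map near a boundary circle on which the restriction has irrational rotation number $\alpha$.

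Second, I invoke continued-fraction combinatorics. Denjoy theory, combined with the Yoccoz-style renormalization for analytic circle diffeomorphisms, tells me that if $p_n/q_n$ is the $n$-th convergent then the iterate $g^{q_n}$ is $C^0$-close to the identity on $\partial \mathbb{D}$, and the orbits $\{g^k(\cdot)\}_{0 \leq k < q_n}$ produce a near-partition of $\partial \mathbb{D}$ into $q_n$ arcs whose images have bounded geometry. Pulling this picture back through $\psi$ to a neighborhood of $K$, I can take $\Omega_n$ to be any neighborhood of $K$ onto which all $g^{\pm q_j}$, $j\le n$, extend holomorphically; that the domain of extension grows toward $K$ as $n \to \infty$ is precisely the content of the return structure. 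For $\eta_n$ I take a level curve $\{G = \varepsilon_n\}$ of the Green's function $G$ of $\mathbb{C}P^1\setminus K$ with pole at $\infty$, for $B_n$ a thin annulus bounded by two such level curves straddling $\eta_n$, and for $R_n$ the image under $\psi$ of one of the fundamental arcs for $g^{q_n}$ in $B_n$, thickened slightly in the radial direction. The closed annulus $A_n$ is then the component of $B_n \setminus R_n$ closer to $K$.

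Condition (2) becomes the statement that, on the circle side, the forward orbit of any boundary point hits the fundamental arc within at most $q_{n+1}$ steps, which is classical. Condition (3) is almost immediate from the fact that $R_n$ is a fundamental domain for the first-return map to $B_n$. The hard part will be the modulus estimate $\operatorname{mod}(A_n) \to \infty$ together with making $R_n$ genuinely capture every crossing orbit: here $R_n$ must simultaneously be radially thin (so that $A_n$ is long and thin, hence of large modulus by Grötzsch-type extremal length inequalities) and angularly wide enough to catch all first-return dynamics. The balance is controlled by the ratio $q_{n+1}/q_n$ of successive continued-fraction denominators together with distortion bounds (Koebe) for the conformal map $\psi$. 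This is the genuine technical core of Pérez-Marco's argument, and it is precisely where the non-linearizability hypothesis enters: if $g$ were linearizable, $K$ would be a closed disk, the Green's function would be the linearizing coordinate, and the moduli $\operatorname{mod}(A_n)$ would be uniformly bounded; the unboundedness of $\operatorname{mod}(A_n)$ is thus a manifestation of the ``thinness'' of the non-linearizable hedgehog near $0$.
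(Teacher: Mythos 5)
First, note that the paper does not prove this proposition at all: it is quoted from P\'erez-Marco (\cite{P3}, \cite{P5}), with \cite{Y} indicated for the proof, and the paper only explains that the stated version is the transport of the disk statement through a uniformization $\psi\colon \mathbb{D}\to\mathbb{C}P^1\setminus K$. Your outline does follow the same route as those sources (uniformize the complement of the hedgehog, obtain an analytic circle diffeomorphism of rotation number $\alpha$, run continued-fraction renormalization), so at the level of strategy you are aligned with the intended proof.

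As a proof, however, there is a genuine gap, and you name it yourself: the ``technical core'' you defer is precisely the content of the proposition. Concretely, (a) the preliminary step is already a substantive theorem of \cite{P4}, not standard prime-end theory --- $K$ need not be locally connected, so $\psi$ has no continuous boundary extension; the correct mechanism is that $h=\psi^{-1}\circ g\circ\psi$ satisfies $|h(z)|\to 1$ near $\partial\mathbb{D}$ (using $g(K)=K$) and extends by Schwarz reflection to an analytic circle diffeomorphism, and the identification of its rotation number with $\alpha$ is itself a nontrivial result. (b) Your concrete choices do not obviously work: taking $\eta_n$ to be a Green's-function level curve and $R_n$ a ``thickened fundamental arc for $g^{q_n}$'' presupposes that the dynamics on such an interior curve is close to a rotation, but $h$ is only a map of a one-sided annulus with no invariant circle inside $\mathbb{D}$; orbits of points on a round level curve can drift radially and miss any prescribed box, so property (2) is not ``classical'' for your $\eta_n$, and (3) requires knowing $R_n$ really is a fundamental domain for first returns, which again needs estimates. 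The whole point of \cite[Proposition 1]{P3}/\cite[Proposition II.3]{P5} is the construction, via Yoccoz's quasi-invariant curves for the extended circle map, of curves $\eta_n$ that are moved only a small hyperbolic Hausdorff distance by the relevant iterates, together with the quantitative geometry that yields ${\rm Mod}(A_n)\to\infty$; none of this is supplied by Gr\"otzsch-type inequalities plus Koebe distortion alone. So the proposal is a reasonable reading plan for the cited references, but it does not constitute an independent proof of the statement.
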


\begin{figure}[htpb]
 \begin{center}
  \includegraphics[width=76mm]{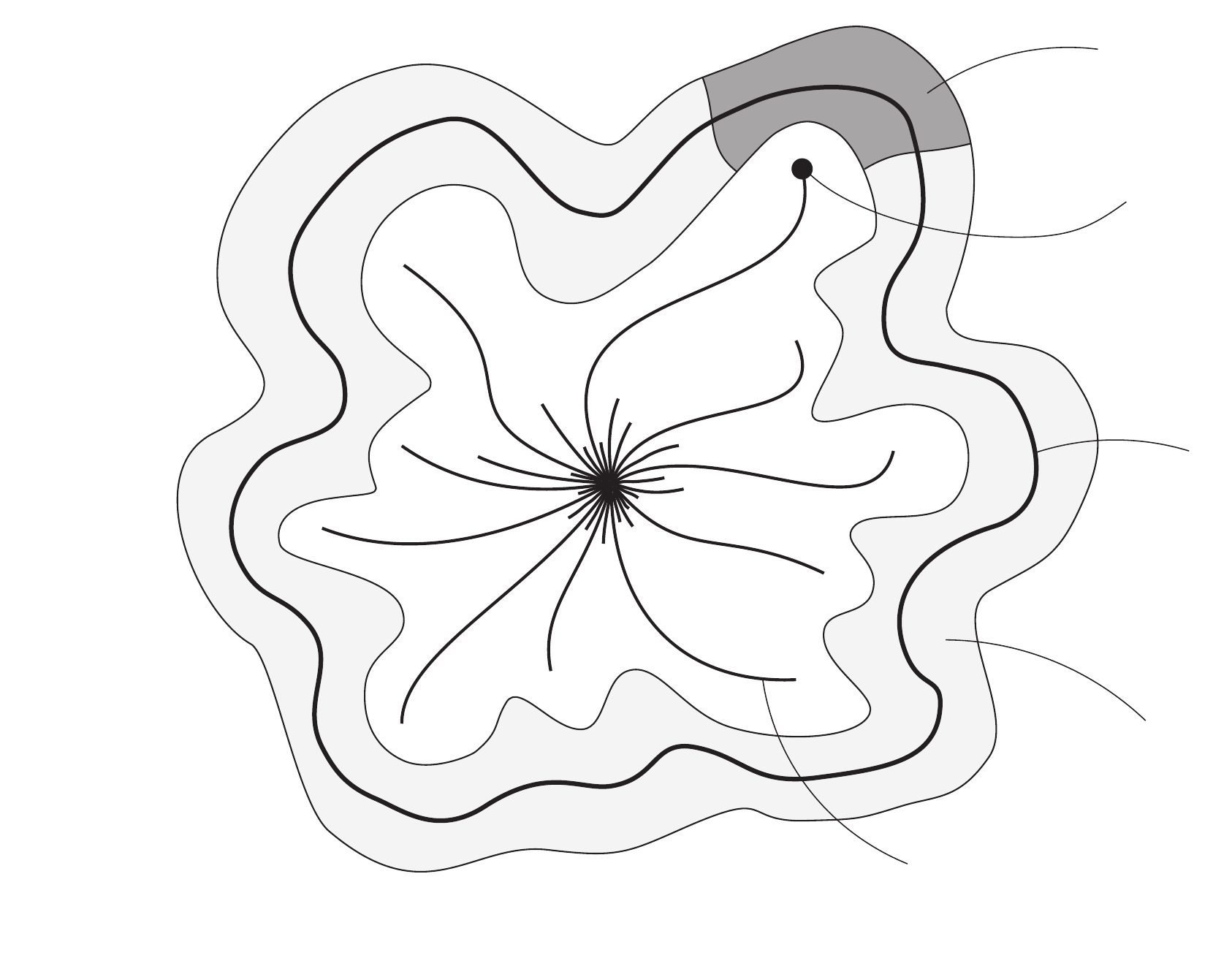}
  \hspace{5mm}
  \includegraphics[width=76mm]{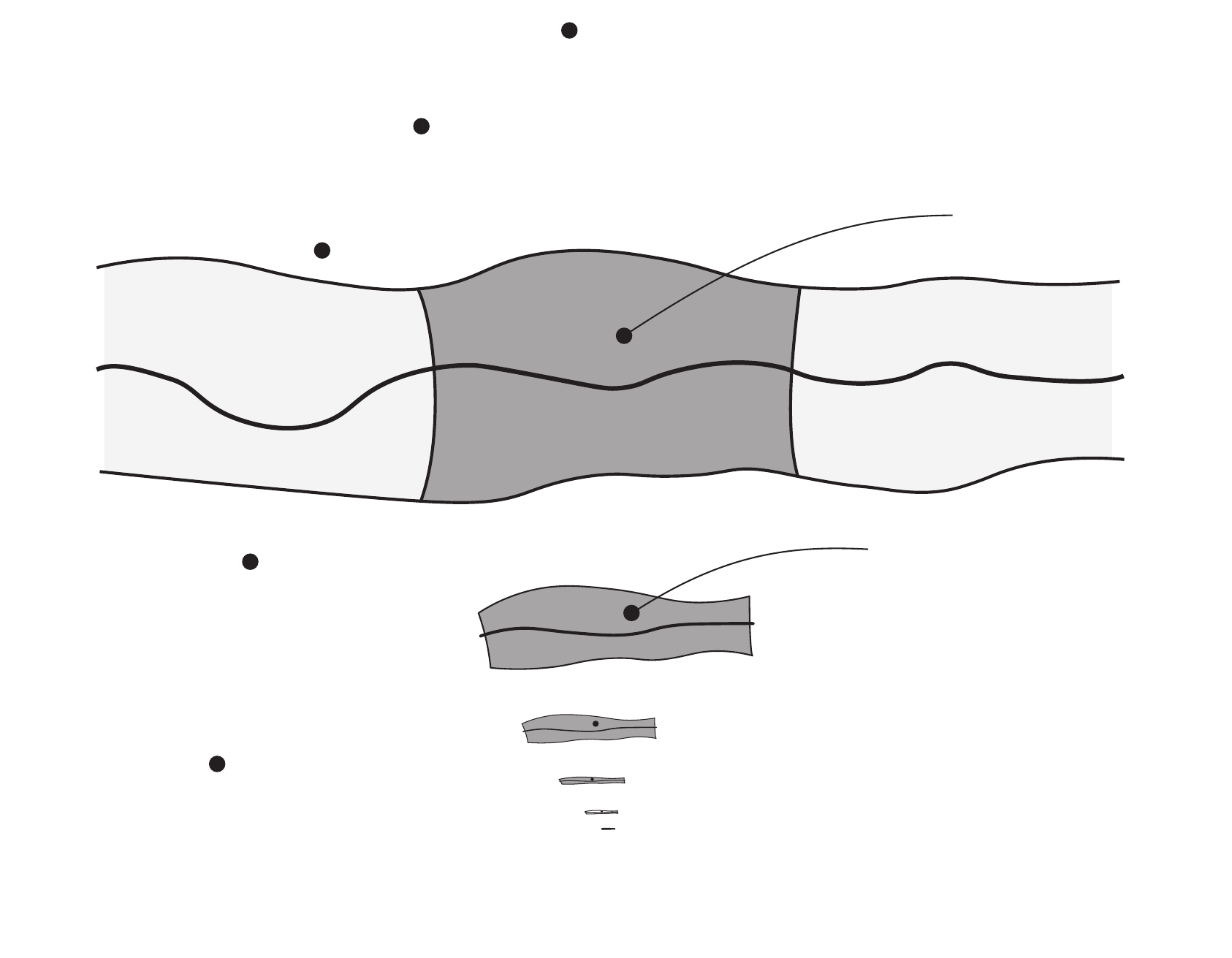}
 \end{center}
 \begin{picture}(0,0)
 \put(-30,173){\small$R_{n}$}
 \put(-25,150){\small$z_{0}$}
 \put(-15,102){\small$\eta_{n}$}
 \put(-25,50){\small$B_{n}$}
  \put(-66,28){\small$K$}
 \put(-205,35){\small$\Omega_{n}$}
 
 \put(13,118){\small$\eta_{n}$}
 \put(35,127){\small$B_{n}$}
 \put(102,147){\small$R_{n}$}
 \put(148,69){\small$R_{n+1}$}
\put(128,53){\small$R_{n+2}$}
\put(115,29){\small$z_{0}$}
\put(103,185){\small$z$}
 \put(28,78){\small$g^{k}(z)$}
 \put(183,147){\small$g^{k_{n}}(z)$}
 \put(168,87){\small$g^{k_{n+1}}(z)$}
\end{picture}
\vspace{-2mm}
 \caption{The separating annulus $B_{n}$, the meridian curve $\eta_{n}$ of  $B_{n}$, 
 and the quadrilateral $R_{n}$ associated with a hedgehog $K$ of $g$$\colon$ 
 The trapped subsequence $(g^{k_{n}}(z))_{n\in \mathbb{N}}$ in the statement (3) is depicted in the right figure.
 The quadrilateral $R_{n}$ converges to a point $z_{0}$ in $K\cap \partial U$. See Lemma \ref{hdf}. 
 }
 \label{fig:three}
\end{figure}

This is a rewriting of \cite[Proposition 1]{P3} as the statement for the hedgehog $K$ 
through a uniformization map $\psi\colon \mathbb{D}\to \mathbb{C}P^1\setminus K$. 
In fact, he used this version in \cite[\S 3 and \S 4]{P3}. 
Let us denote the corresponding quintuple in $\mathbb{D}$ 
by $(\widetilde{\Omega}_{n}, \widetilde{B}_{n}, \widetilde{\eta}_{n}, \widetilde{R}_{n}, \widetilde{A}_{n})_{n\in \mathbb{N}}$.
Note the symbols which we use. 
Compare with the original statement of \cite[Proposition 1]{P3}, 
the two boundary components of $\widetilde{B}_{n}$ correspond to curves $\gamma^{(n)}_{0}$ and $\gamma^{(n)}_{1}$, 
$\widetilde{\eta}_{n}$ corresponds to $\gamma^{(n)}$. Also, $\widetilde{R}_{n}$ 
and $\widetilde{A}_{n}$ correspond to $R_{n}$ and $A_{n}$ respectively. 

The statements $(2)$ and $(3)$ imply that the dynamics around the hedgehog behaves as ``quasi-rotation''. 
The closed curve $\eta_{n}$ is called a quasi-invariant curve, that is, 
after some iterations, the curve returns to near the initial position in the sense of Hausdorff distance 
with respect to the Poincar\'{e} metric on $\mathbb{C}P^{1}\setminus K$. 
For more details, see \cite{P5}, \cite{Y}, and \cite{P6}.

\begin{lemma}\label{hdf}
Let $(U,g)$ and $(\Omega_{n}, B_{n}, \eta_{n}, R_{n}, A_{n})_{n\in \mathbb{N}}$ associated with $K=K_{(U,g)}$ be as in Proposition \ref{propII3}.  
Then the annulus $B_{n}$ converges to $K$ 
in the sense of Hausdorff convergence with respect to the Fubini-Study metric. 
Moreover, for any point $z_{0}$ in $K\cap \partial U$, 
we can take the quadrilateral $R_{n}$ such that it converges to the point $z_{0}$. 
\end{lemma}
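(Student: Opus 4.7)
The plan is to combine Proposition \ref{propII3} with the uniformization of $\mathbb{C}P^1 \setminus K$. Since $K$ is a non-trivial compact continuum with $\mathbb{C}P^1 \setminus K$ connected and simply connected, the Riemann mapping theorem yields a biholomorphism $\psi \colon \mathbb{D} \to \mathbb{C}P^1 \setminus K$. Pulling back the quintuple via $\psi^{-1}$, denote the resulting objects in $\mathbb{D}$ by $(\widetilde{\Omega}_n, \widetilde{B}_n, \widetilde{\eta}_n, \widetilde{R}_n, \widetilde{A}_n)$; note that $\mathrm{mod}(\widetilde{A}_n) = \mathrm{mod}(A_n) \to \infty$. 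Using Carath\'eodory's prime-end theorem to extend $\psi$ continuously between the appropriate boundary compactifications, Hausdorff convergence of $B_n$ to $K$ in the Fubini--Study metric is equivalent to Hausdorff accumulation of $\widetilde{B}_n$ on $\partial \mathbb{D}$ inside $\overline{\mathbb{D}}$, and $R_n \to z_0$ is equivalent to $\widetilde{R}_n$ tending to a prescribed prime end. This reduction is clean because $\psi$ preserves moduli of subannuli and records the hedgehog boundary as $\partial \mathbb{D}$.

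For the first assertion, the main tool is Gr\"otzsch's modulus inequality. The compact annulus $\widetilde{A}_n \subset \mathbb{D}$ separates the compact continuum $\widetilde{R}_n$ from $\partial \mathbb{D}$, so if $\widetilde{R}_n$ remained inside a fixed compact subset of $\mathbb{D}$ the modulus of any separating annulus would be bounded above in terms of that compact set and the hyperbolic geometry of $\mathbb{D}$ --- contradicting $\mathrm{mod}(\widetilde{A}_n) \to \infty$. Hence $\widetilde{R}_n$ accumulates on $\partial \mathbb{D}$ with Euclidean diameter going to $0$, and so $R_n \to \partial K$ in Hausdorff. To upgrade from $R_n$ to $B_n$, I would exploit the quasi-invariance of $\eta_n$ (property $(2)$) together with property $(3)$: every point of $\eta_n$ has some iterate $g^{m_n}$ landing in $R_n$, and every orbit crossing $B_n$ from one side to the other visits $R_n$. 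Covering $\eta_n$, and then all of $B_n$, by a controlled family of $g^{\pm q_j}$-iterates of a neighborhood of $R_n$ (the iterates being defined on $\Omega_n$ by property $(1)$), and using that $g$ preserves $K$ and acts near $\partial K$ as an almost rotation, these iterates stay uniformly close to $K$ in the Fubini--Study metric; this forces the two boundary curves of $B_n$, and hence all of $B_n$, to accumulate on $K$.

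For the second assertion, Theorem \ref{thm:IV.2.3.} provides an orbit $(g^m(x))_{m \in \mathbb{Z}}$ whose closure is $\partial K$; pass to a subsequence $g^{m_j}(x) \to z_0$. For large $n$, property $(3)$ of Proposition \ref{propII3} forces a further iterate of each $g^{m_j}(x)$ into $R_n$, and because the construction of the quintuple is flexible in its choice of initial transverse return sector, one can arrange the construction so that the point of $\partial K$ to which $R_n$ collapses (by the first step) is exactly $z_0$. The main obstacle I anticipate is precisely this localization: Proposition \ref{propII3} as stated does not name the collapse point of $R_n$, so justifying that every $z_0 \in K \cap \partial U$ is attainable requires revisiting P\'erez-Marco's construction of the quintuple to track how the initial quasi-invariant sector determines the asymptotic position of $R_n$, rather than using only the abstract output of Proposition \ref{propII3}.
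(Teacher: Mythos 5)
There is a genuine gap, and it sits exactly at the crux of the lemma: pinning down \emph{where} $R_n$ goes. Your Gr\"otzsch step draws the wrong conclusion: from $\mathrm{Mod}(A_n)\to\infty$ one only gets that the complementary component of $A_n$ containing $R_n$ has Fubini--Study diameter tending to $0$ (this is how the paper uses \cite[\S 6.4]{LV}: $\mathrm{Mod}(A_n)\le 2\pi^2/\ell^2$, and the component containing $K$ cannot shrink since $K\neq\{0\}$); it does \emph{not} force $\widetilde R_n$ to leave every compact subset of $\mathbb{D}$ --- the round annuli $\{r_n<|z|<1-\varepsilon\}$ have modulus tending to $\infty$ while separating sets clustered at the origin from $\partial\mathbb{D}$. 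So your argument for the first assertion does not show that $R_n$ accumulates on $K$, and the subsequent ``upgrade'' covering $\eta_n$ and $B_n$ by iterates of a neighborhood of $R_n$ is not justified by properties (1)--(3) (property (1) only provides the iterates $g^{\pm q_j}$, $j=0,\dots,n$, on $\Omega_n$, and ``acts as an almost rotation near $\partial K$'' is not a usable estimate). The paper does not argue this way: the first assertion is read off from the explicit construction of $\widetilde B_n$ in \cite{P5}, \cite{Y}, where the $\widetilde B_n$ are collars approaching $\partial\mathbb{D}$. Your appeal to Carath\'eodory is also unavailable: non-linearizable hedgehogs are not locally connected, so $\psi$ admits no continuous extension to $\overline{\mathbb{D}}$, and convergence of $\widetilde R_n$ to a prescribed prime end would not imply Fubini--Study convergence of $R_n$ to a point, since prime-end impressions may be nondegenerate continua.

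For the second assertion --- which you yourself flag as the main obstacle --- your substitute argument fails and the needed idea is missing. The dense orbit from Theorem \ref{thm:IV.2.3.} lies in $K$, which is completely invariant, so no further iterate of $g^{m_j}(x)$ can ever enter $R_n\subset B_n\subset\Omega_n\setminus K$; property (3) of Proposition \ref{propII3} concerns points whose orbits cross from one side of $B_n$ to the other, not points of $K$, so it gives nothing here. The paper's localization device is different: since $\partial U$ is of class $C^1$, the point $z_0\in K\cap\partial U$ is accessible from $\mathbb{C}P^1\setminus\overline U$, so there is a path $\gamma$ landing at $z_0$; by the landing theorem (\cite[Corollary 17.10]{M1}) the lifted path $\widetilde\gamma=\psi^{-1}\circ\gamma$ lands at a point of $\partial\mathbb{D}$, and the flexibility in the construction of \cite{P5}, \cite{Y} allows one to choose $\widetilde R_n$ containing a point $q_n\in\widetilde B_n\cap\widetilde\gamma$. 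Then $R_n$ contains $\gamma(t_n)\to z_0$, while the modulus inequality forces the Fubini--Study diameter of $R_n$ to tend to $0$, whence $R_n\to z_0$. This landing-path argument, which converts the abstract freedom in P\'erez-Marco's construction into the required localization, is precisely the ingredient your proposal leaves open.
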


\begin{proof}[Proof of Lemma \ref{hdf}]

It is clear from the construction of $\widetilde{B}_{n}$ in the proof of \cite[Proposition II.3]{P5} or \cite{Y}
that the annulus $\widetilde{B}_{n}$ converges to the boundary $\partial \mathbb{D}$ and 
the corresponding annulus $B_{n}$ converges to the hedgehog $K$. 
Therefore, we show only the latter statement. 
Choose any point $z_{0}$ in $K\cap \partial U$. 
Since $\partial U$ is of class $C^1$, 
there is a path $\gamma\colon [0,1) \to \mathbb{C}P^1\setminus \overline{U}$ which lands at $z_{0}$, that is, 
the limit $\textstyle \lim_{t\to1}\gamma(t)$ exists and is $z_{0}$. 
For a uniformization map $\psi\colon \mathbb{D}\to \mathbb{C}P^1\setminus K$, 
it follows that the path $\gamma$ maps under $\psi^{-1}$ to a path $\widetilde{\gamma}$ in $\mathbb{D}$ which lands at some point on $\partial \mathbb{D}$ (see e.g. \cite[Corollary 17.10.]{M1}). 
The landing point is denoted by $p$. 
For sufficiently large $n$, the closed annulus $\widetilde{B}_{n}$ intersects with the path $\widetilde{\gamma}$. 
We can choose a point $q_{n}=\widetilde{\gamma}(t_{n})$ in $\widetilde{B}_{n}\cap \widetilde{\gamma}$ for each $n$
such that $(t_{n})_{n\in \mathbb{N}}$ is an increasing sequence. 
According to the construction of $\widetilde{R}_{n}$ in \cite{P5} or \cite{Y}, 
we can construct a quadrilateral $\widetilde{R}_{n}$ which contains the point $q_{n}$. 
Note that $q_{n}$ converges to $p$ as $n\to \infty$. 
Let us return to $\mathbb{C}P^1\setminus K$ under $\psi$. 
The quadrilaterals $\widetilde{R}_{n}$ and the path $\widetilde{\gamma}$ map to $R_{n}$ and $\gamma$. 
See Figure \ref{fig:four}. 
Then, we apply the modulus inequality (cf. \cite[\S 6.4]{LV})
$$\textrm{Mod}(A_{n})\leq \frac{2\pi^2}{\ell^2}$$ 
to the separating annulus $A_{n}$ surrounding $R_{n}$, 
where $\ell$ is the infimum of the length of closed curves separating two boundary components of $A_{n}$ 
with respect to the Fubini-Study metric $g_{FS}$ on $\mathbb{C}P^1$.
Since the modulus $\textrm{Mod}(A_{n})$ tends to $\infty$ from Proposition \ref{propII3}, 
it follows from the standard argument that 
the boundary beside $R_{n}$ degenerates to a single point. 
Hence, so does the quadrilateral $R_{n}$. 
By the choice of $R_{n}$'s, it converges to the point $z_{0}$. 
\end{proof}

\begin{figure}[htpb]

 \begin{center}
 \includegraphics[width=79mm]{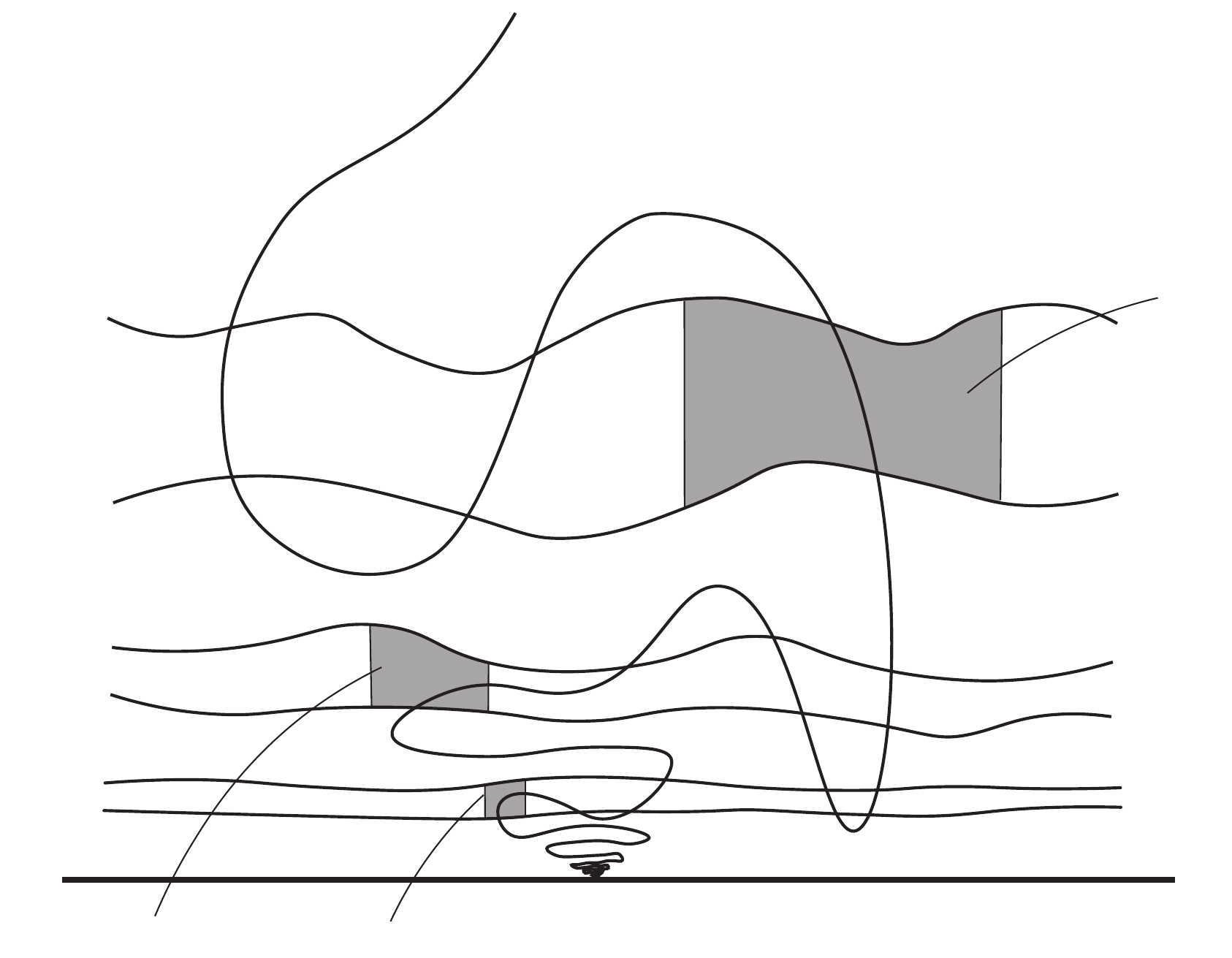}
  \hspace{-3mm}
  \includegraphics[width=78.8mm]{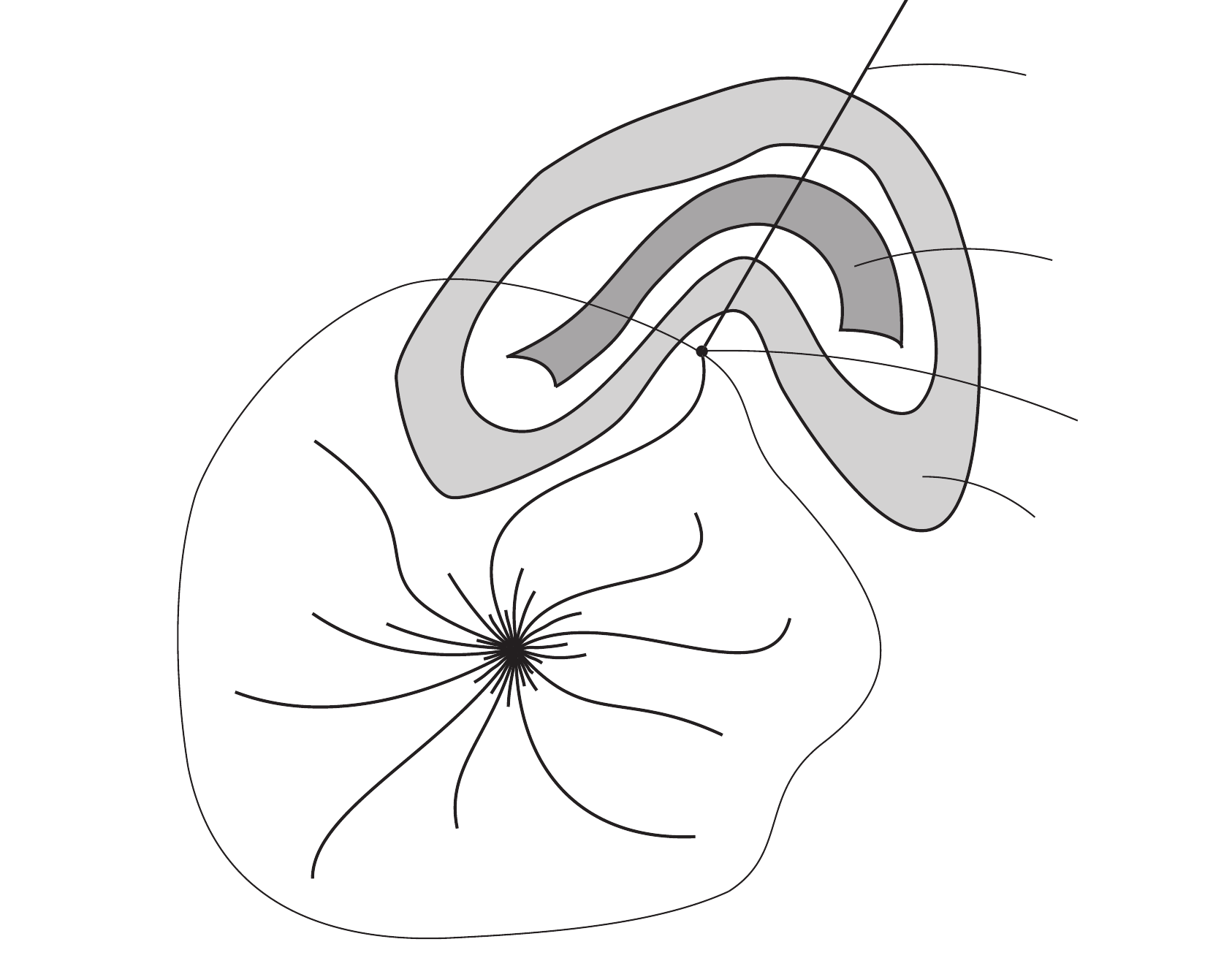}
 \end{center}
 \begin{picture}(0,0)
 \put(-160,175){\small$\widetilde{\gamma}$}
 \put(-10,135){\small$\widetilde{R}_{n}$}
 \put(-230,109){\small$\widetilde{B}_{n}$}
 \put(-235,65){\small$\widetilde{B}_{n+1}$}
 \put(-235,40){\small$\widetilde{B}_{n+2}$}
 \put(-210,10){\small$\widetilde{R}_{n+1}$}
 \put(-165,10){\small$\widetilde{R}_{n+2}$}
 \put(-5,27){\small$\partial \mathbb{D}$}
 
 \put(50,97){\small$K$}
 \put(150,30){\small$U$}
 \put(191,175){\small$\gamma$}
 \put(195,138){\small$R_{n}$}
 \put(199,110){\small$z_{0}$}
 \put(189,90){\small$A_{n}$}
\end{picture}

 \caption{There is a (sequence of) closed quadrilateral $R_{n}$ intersecting with the path $\gamma$, 
 which is surrounded by a closed annulus $A_{n}$ whose modulus tends to $\infty$ as $n\to \infty$. 
 The construction in $\mathbb{D}$ (under a uniformization map $\psi\colon \mathbb{D}\to \mathbb{C}P^1\setminus K$) 
 is depicted in the left figure. 
 }\label{fig:four}
\end{figure}

\begin{theorem}[{\cite[Theorem 1]{P3}}, {\cite[Theorem III.12.]{P5}}]\label{pm12}
Let $g(z)=\mu z+O(z^2)$ be a local holomorphic diffeomorphism
with the irrationally indifferent fixed point $0$.  
Assume that $g$ is non-linearizable. 
Then, 
the sequence $(g^{n}(z))_{n\in\mathbb{N}}$ does not converge to $0$ as $n\to \infty$ 
for any point $z$ distinct from $0$. 
\end{theorem}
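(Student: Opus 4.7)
The plan is to derive a contradiction from the assumption $g^n(z) \to 0$ for some $z \neq 0$ by trapping a subsequence of iterates inside the quadrilaterals $R_n$ of Proposition~\ref{propII3}, which by Lemma~\ref{hdf} can be made to shrink to a prescribed point $z_0 \in K \cap \partial U$ different from $0$. The argument splits naturally according to whether $z \in K$ or $z \notin K$.

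First I would dispose of the easier case $z \in K \setminus \{0\}$ by invoking the recurrence of $g$ on the hedgehog: every point of $K$ belongs to its own $\omega$-limit set (\cite[Corollaire~1]{P3}, already used in the proof of Theorem~\ref{thm_psh}). If $g^n(z)$ converged to $0$, then $\omega(z)=\{0\}$ would contradict $z\in\omega(z)$ unless $z=0$.

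For the principal case $z \notin K$, I would fix an admissible neighborhood $U$ of $0$ small enough that $z \notin \overline{U}$, so that $\mathrm{dist}(z, K) > 0$, set $K = K_{(U, g)}$, and choose a point $z_0 \in K \cap \partial U$; this $z_0$ is automatically distinct from $0$ since $0 \in \mathrm{Int}\, U$. Applying Lemma~\ref{hdf}, I would select the quintuples $(\Omega_n, B_n, \eta_n, R_n, A_n)$ so that $B_n \to K$ in Hausdorff distance and $R_n \to z_0$. Let $V_n^{\mathrm{in}}$ and $V_n^{\mathrm{out}}$ denote the two components of $\mathbb{C}P^1 \setminus B_n$, with $K \subset V_n^{\mathrm{in}}$. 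Because $\partial V_n^{\mathrm{in}} \subset B_n$ and $B_n$ is eventually contained in any prescribed neighborhood of $K$, the set $V_n^{\mathrm{in}}$ itself is contained in such a neighborhood of $K$ for large $n$; hence $z \in V_n^{\mathrm{out}}$, while $g^k(z) \to 0 \in K \subset V_n^{\mathrm{in}}$ places $g^k(z) \in V_n^{\mathrm{in}}$ for all sufficiently large $k$. Condition~(3) of Proposition~\ref{propII3} then yields an integer $k_n$ with $g^{k_n}(z) \in R_n$. Since $R_n \to z_0$, we conclude $g^{k_n}(z) \to z_0$. Complete invariance of $K$ forces $g^k(z) \in \mathbb{C}P^1 \setminus K$ for every $k$, whereas $z_0 \in K$; this rules out $(k_n)$ having a bounded subsequence (which would, by pigeonhole, give $g^k(z) = z_0$ for some fixed $k$). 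Therefore $k_n \to \infty$, and the hypothesis $g^n(z) \to 0$ forces $g^{k_n}(z) \to 0$, contradicting $g^{k_n}(z) \to z_0 \neq 0$.

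The main obstacle I expect is the geometric verification that, for large $n$, $z$ lies in the outer component $V_n^{\mathrm{out}}$ while a forward iterate lies in $V_n^{\mathrm{in}}$, since the fractal nature of $K$ makes the shape of $B_n$ hard to control directly. This is precisely where the Hausdorff convergence $B_n \to K$ supplied by Lemma~\ref{hdf} is essential: it confines $V_n^{\mathrm{in}}$, whose topological boundary lies in $B_n$, to an arbitrarily small neighborhood of $K$, separating it cleanly from the fixed point $z$ on one side and identifying the final accumulation point $z_0$ on the other.
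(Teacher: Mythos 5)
Your proposal follows essentially the same route as the paper's own sketch: the case $z\in K\setminus\{0\}$ is handled by recurrence on the hedgehog (\cite[Corollaire 1]{P3}), and for $z\notin K$ one uses Proposition \ref{propII3}\,(3) together with Lemma \ref{hdf} to trap a subsequence $g^{k_n}(z)\in R_n\to z_0\in K\cap\partial U$, contradicting convergence to $0$ (your explicit treatment of why $k_n\to\infty$, via complete invariance of $K$, is a welcome refinement of the paper's terser wording). One caution: the inference ``$\partial V_n^{\mathrm{in}}\subset B_n$ close to $K$ implies $V_n^{\mathrm{in}}$ close to $K$'' is not a general topological fact (an annulus Hausdorff-close to a compact connected full set can bound a large inner component); here it should instead be justified from the construction of $B_n$ via the uniformization $\psi\colon\mathbb{D}\to\mathbb{C}P^1\setminus K$, since $V_n^{\mathrm{in}}\setminus K$ is the image of a thin annulus adjacent to $\partial\mathbb{D}$ and a cluster-set argument shows this image lies in any prescribed neighborhood of $K$ for large $n$.
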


Here, we give a sketch of the proof of Theorem \ref{pm12} following \cite{P5}. 
Let $U$ be an admissible domain for $g$ and $K$ a hedgehog of $(U,g)$. 
First, we take a point $z\in K\setminus \{0\}$.
It is known that any orbits in $K$ are recurrent (\cite[Corollaire 1]{P3}), 
so that the sequence $(g^{n}(z))_{n\in \mathbb{N}}$ does not converge to $0$. 
Second, take a point $z\not\in K$ where $g$ is defined and a point $z_{0}\in K\cap \partial U\neq \emptyset$. 
Let $B_{n}$ and $R_{n}$ be as in Proposition \ref{propII3} and Lemma \ref{hdf}.  
After enlarging $n$, 
we may assume that the point $z$ belongs to the component of $\mathbb{C}P^1\setminus B_{n}$ which does not contain $K$. 
If the sequence $(g^{k}(z))_{k\in\mathbb{N}}$ converges to $0$, 
then 
it follows from Proposition \ref{propII3} $(3)$ that 
there exists a subsequence  $(g^{k_{n}}(z))_{n\in\mathbb{N}}$ such that $g^{k_{n}}(z)\in R_{n}$ for each $n$. See Figure \ref{fig:three}. 
Therefore, by Lemma \ref{hdf}, the sequence $(g^{k}(z))_{k\in\mathbb{N}}$ accumulates at $z_{0}$. 
This contradicts the assumption, so that the statement follows. 

To show Theorem \ref{Thm.III.14}, we prepare the following lemma. 

\begin{lemma}\label{lem_a}
Let $g$ be as Proposition \ref{propII3}. 
Let $K$ and $K'$ be two hedgehogs of $(U,g)$ and $(U',g)$, 
where $U$ and $U'$ are admissible domains of $g$. 
Then, $K\subset K'$ or $K'\subset K$ hold. 
\end{lemma}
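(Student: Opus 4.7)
The plan is to argue by contradiction: I assume $K \not\subset K'$ and aim to deduce $K' \subset K$, from which the dichotomy follows by symmetry of the assertion.

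First, I would locate a dense-orbit point inside $K \setminus K'$. Since $g$ is non-linearizable, both hedgehogs have empty interior, and the relatively open subset $K \setminus K'$ of $K$ is nonempty by assumption. Because the support of the harmonic measure $\mu_K$ equals $K$, this open piece has positive $\mu_K$-measure, so Theorem \ref{thm:IV.2.3.} furnishes $z \in K \setminus K'$ whose orbit $\{g^n(z)\}_{n\in \mathbb{Z}}$ is dense in $K$. Complete invariance of $K$ and $K'$ forces the entire orbit to remain in $K \setminus K'$, while its closure $K$ meets $K'$ at $0$; in particular the iterates accumulate at $0$ without ever entering $K'$.

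Next, I would apply Proposition \ref{propII3} together with Lemma \ref{hdf} to the pair $(U', g)$. For each $n$ this yields a quintuple $(\Omega_n, B_n, \eta_n, R_n, A_n)$ with $B_n \to K'$ in Hausdorff distance, and the quadrilateral $R_n$ can be arranged to converge to any preassigned point $w_0 \in K' \cap \partial U'$. For $n$ sufficiently large, $z$ has positive distance from $K'$ and hence lies in the outside component of $\mathbb{C}P^1 \setminus B_n$, whereas some iterate $g^k(z)$ is arbitrarily close to $0$ and thus lies in the inside component. Proposition \ref{propII3}(3) then delivers $k_n$ with $g^{k_n}(z) \in R_n$; since $g^{k_n}(z) \in K$ and $K$ is closed, letting $n \to \infty$ forces $w_0 \in K$. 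Because $K' \cap \partial U'$ is nonempty by Theorem \ref{siegel}(iii) and $w_0$ was arbitrary, I conclude $K' \cap \partial U' \subset K$.

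The main obstacle is the final passage from $K' \cap \partial U' \subset K$ to the full inclusion $K' \subset K$. My plan is to invoke the topological minimality of the restricted action $g|_{K'}$ in the non-linearizable case, a standard feature of P\'erez-Marco's theory (closely tied to the irrational rotation number of the associated analytic circle homeomorphism): every orbit in $K' \setminus \{0\}$ is dense in $K'$. Granting minimality, I pick $w \in K' \cap \partial U' \subset K$; its orbit lies in $K$ by invariance of $K$ and is dense in $K'$, whence $K' \subset \overline{\{g^n(w)\}_{n \in \mathbb{Z}}} \subset K$, completing the proof. Should strict topological minimality not be accessible from the cited results, an alternative route is to verify that $K' \cap \partial U'$ carries positive $\mu_{K'}$-measure (using that it is a nonempty closed portion of the outer boundary of $K'$ accessible from $\mathbb{C} \setminus K'$) and then apply Theorem \ref{thm:IV.2.3.} within it to exhibit a dense-orbit point, yielding the same inclusion.
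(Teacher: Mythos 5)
Your first two steps are essentially sound and use the same toolkit as the paper (the annuli $B_{n}$, the quadrilaterals $R_{n}$ shrinking to a prescribed point of the hedgehog on $\partial U'$ via Lemma \ref{hdf}, and the trapping statements of Proposition \ref{propII3}): assuming $K\not\subset K'$, producing a $\mu_{K}$-generic point $z\in K\setminus K'$ whose orbit stays in $K\setminus K'$ yet accumulates at $0$, and then crossing $B_{n}$ to force $g^{k_{n}}(z)\in R_{n}$ does give $K'\cap\partial U'\subset K$ (modulo the routine justification that a nonempty relatively open subset of $K$ has positive $\mu_{K}$-measure, which uses that $\mathbb{C}P^{1}\setminus K$ is simply connected and that a non-linearizable hedgehog has empty interior). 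The genuine gap is your final step. Minimality of $g|_{K'}$ --- ``every orbit in $K'\setminus\{0\}$ is dense in $K'$'' --- is not a standard feature of P\'erez-Marco's theory and is not available from anything cited here: Theorem \ref{thm:IV.2.3.} gives density only for $\mu_{K'}$-almost every point, and \cite[Corollaire 1]{P3} gives only recurrence; minimality of hedgehog dynamics is not established in this literature. Your fallback fails as well: there is no reason for $K'\cap\partial U'$ to carry positive $\mu_{K'}$-measure. Theorem \ref{siegel} only guarantees $K'\cap\partial U'\neq\emptyset$, and this set can perfectly well be a single point (one ``spine'' of the hedgehog touching $\partial U'$); since the harmonic measure of a simply connected domain whose boundary is a nondegenerate continuum has no atoms, such a set has $\mu_{K'}$-measure zero, so Theorem \ref{thm:IV.2.3.} cannot be applied ``within it''. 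Hence the passage from $K'\cap\partial U'\subset K$ to $K'\subset K$ is unproved.

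Note how the paper's proof avoids having to upgrade a partial inclusion: it assumes \emph{both} $K\not\subset K'$ and $K'\not\subset K$ and derives a contradiction directly. It constructs a collar neighborhood $V_{\delta}$ of $K'$ with $K\subset\overline{V_{\delta}}$ and picks $z_{0}\in K\cap\partial V_{\delta}$ (so $z_{0}\notin V_{\delta/2}$), makes the relevant component of $U\cap V_{\delta}$ into an admissible domain $V$ with $K=K_{(V,g)}$, and applies Proposition \ref{propII3} and Lemma \ref{hdf} to $K$ with $R_{n}\to z_{0}$. The assumption $K'\not\subset K$ and the connectedness of $K'$ force the quasi-invariant curve $\eta_{n}$ to meet $K'$ for large $n$ (otherwise $\eta_{n}$, lying in a thin collar of $K$, would disconnect $K'$), and then statement (2) of Proposition \ref{propII3} sends a point of $K'$ into $R_{n}$, which lies outside $V_{\delta/2}\supset K'$ --- contradicting the invariance of $K'$. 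If you wish to keep your one-sided strategy, you must either prove minimality of the hedgehog dynamics (not available) or replace your last step by a two-sided argument of this kind.
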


\begin{proof}[Proof of Lemma \ref{lem_a}]
We prove this by contradiction. 
Assume that $K\not\subset K'$ and $K'\not\subset K$ hold. 
Set $\mathbb{D}_{r}=\{z\in \mathbb{C}\mid |z|<r\}$, 
$\mathbb{D}=\mathbb{D}_{1}$, and $\mathbb{A}_{R}=\{z\in \mathbb{C}\mid R<|z|<1\}$.
First, we take a uniformization map 
$$\varphi\colon \mathbb{D} \to \mathbb{C}P^1 \setminus K'.$$ 
Consider an open neighborhood of $K'$ as 
$$V_{\epsilon}=\varphi({\mathbb{A}_{1-\epsilon}})\cup K'$$
for $\epsilon>0$.   
Since  $K\not\subset K'$ and $K$ is compact, there is $\delta>0$ such that 
$K\subset \overline{V_{\delta}}$ and $K\cap \partial V_{\delta}\neq \emptyset$.  
Choose a point $z_{0}\in K\cap \partial V_{\delta}$. 
Note that $z_{0}\not\in V_{\delta/2}$. 
We take the connected component of $U\cap V_{\delta}$ whose closure contains $K$. 
After a suitable smoothing of the boundary, the domain is an admissible domain of $g$, denoted by $V$. 
Note that $K \subset \overline{V}$ and $z_{0}\in K\cap \partial V\neq \emptyset$ still hold. 
Therefore, $K$ can be regarded as a hedgehog of $(V,g)$. 
Now apply Proposition \ref{propII3} and Lemma \ref{hdf} to the pair $(K=K_{(V,g)}, \{z_{0}\})$. 
For each $n\in \mathbb{N}$, take $\eta_{n}$ and $R_{n}$ as in the proposition. 

Let us show that there exists an integer $N$ such that $R_{n}\cap V_{\delta/2}=\emptyset$ and $\eta_{n}\cap K'\neq \emptyset$ hold for any $n\geq N$. 
The former statement follows directly from Lemma \ref{hdf}.  
On the other hand, 
the latter statement is shown by contradiction. 
Assume that there is a subsequence $(n_{k})_{k\in \mathbb{N}}$ so that $\eta_{n_{k}}\cap K' =\emptyset$.
Similarly to the previous paragraph, 
take a uniformization map 
$$\psi\colon \mathbb{D} \to \mathbb{C}P^1 \setminus K, $$
an open neighborhood of $K$ as 
$$V'_{\epsilon}=\psi({\mathbb{A}_{1-\epsilon}})\cup K$$
for $\epsilon>0$, 
and choose $\delta'>0$ such that 
$K'\subset \overline{V'_{\delta'}}$ and $K'\cap \partial V'_{\delta'}\neq \emptyset$ hold. 
Here we used the assumption $K'\not\subset K$ and the compactness of $K'$. 
Also, we choose a point $z'_{0}\in K'\cap V'_{\delta'}$. 
By Lemma \ref{hdf}, $\eta_{n_{k}}$ is contained in $V'_{\delta'/2}$ for large $k$. 
The Jordan curve theorem shows that $\eta_{n_{k}}$ decomposes $\mathbb{C}P^1$ 
into two domains $W_{0}$ and $W_{\infty}$ such that $\partial W_{0}=\partial W_{\infty}=\eta_{n_{k}}$, 
where $0\in W_{0}$ and $\infty \in W_{\infty}$ hold. Note that $z'_{0}$ belongs to $W_{\infty}$.  
By the assumption above, $K'$ is contained in $W_{0}\cup W_{\infty}$. 
However, $0\in K'\cap W_{0}\neq \emptyset$ and $z'_{0}\in K'\cap W_{\infty}\neq \emptyset$ hold. 
This contradicts the connectivity of $K'$, thus the latter statement follows. 

For sufficiently large $n$, take a point $z_{1}\in \eta_{n}\cap K'\neq \emptyset$. 
It follows from Proposition \ref{propII3} (2) that 
there exists an iteration $g^{m_{n}}(z_{1})$ contained in $R_{n}$. 
On the other hand, since $R_{n}\cap V_{\delta/2} = \emptyset$, 
the point $g^{m_{n}}(z_{1})$ lies outside of $V_{\delta/2}$. 
This contradicts the invariance of $K'$ under $g$. 
\end{proof}

\begin{proof}[Proof of Theorem \ref{Thm.III.14}.] 
For any open neighborhood $W$ of $0$, there is a small $R>0$ such that 
$\mathbb{D}_{R}=\{|z|<R\}\subset W$ and $f,g,g\circ f,$ and $f\circ g$ are defined on $\mathbb{D}_{R}$, 
further, $f$ and $g$ are univalent on an open neighborhood of $\overline{\mathbb{D}}_{R}$.
For sufficiently small $\epsilon>0$ and any $r\in (0,\epsilon)$, 
$f(\mathbb{D}_{r})\subset \mathbb{D}_{R}$ and $f^{-1}(\mathbb{D}_{r})\subset \mathbb{D}_{R}$. 
By Theorem \ref{siegel}, 
for each $r\in (0,\epsilon)$, there is a hedgehog $K_{r}$ of $(\mathbb{D}_{r},g)$.
Since $f$ and $g$ commute, $f(K_{r})$ is also a hedgehog of $(f(\mathbb{D}_{r}),g)$. 
As applying Lemma \ref{lem_a} to these hedgehogs,  
we have $K_{r}\subset f(K_{r})$ or $f(K_{r})\subset K_{r}$. 
We may assume that $f(K_{r})\subset K_{r}$ holds by exchanging $f$ and $f^{-1}$. 

The rest part is the same as the proof of {\cite[Thm.III.14]{P5}}.
Iterating by $f$, which is well-defined,  
we have the nested sequence
$$K_{r}\supset f(K_{r}) \supset f^2(K_{r}) \supset \cdots. $$
The set $\textstyle L=\underset{n\geqq 0}{\bigcap} f^{n}(K_{r})$ satisfies the following properties:
\begin{itemize}
\item[(1)] $L$ is compact, connected, and $\mathbb{C}\backslash L$ is connected, 
\item[(2)] $0\in L$,
\item[(3)] $L\neq \{0\}$, and 
\item[(4)] $L$ is invariant under $f,f^{-1},g$, and $g^{-1}$. 
\end{itemize}
It is not difficult to show (1),(2), and (4). 
Let us show (3). If $L=\{0\}$, then, for any $z\in K_{r}\setminus \{0\}$, 
the sequence $(f^{n}(z))_{n\in\mathbb{N}}$ converges to $0$ as $n\to \infty$.
However, $f$ is also non-linearizable from Proposition \ref{prop:comm}, 
so that this contradicts Theorem \ref{pm12}. 
Therefore 
$L$ is a common hedgehog of $f$ and $g$.
\end{proof}


\end{document}